\documentclass[11pt,a4paper]{article}
\usepackage[utf8]{inputenc}
\usepackage{a4wide} %compact layout by simone
\usepackage[utf8]{inputenc}
\usepackage{subfig}
\usepackage{bbm}
\usepackage{amsmath}
\usepackage{graphicx}
\usepackage{epstopdf}
\usepackage{algcompatible}
\usepackage{adjustbox}
\usepackage{titlesec}
\usepackage{csquotes}
\usepackage{tikz}
\usetikzlibrary{shapes.geometric}
\usepackage{makecell}
\usetikzlibrary{positioning,arrows.meta}
\RequirePackage{ifthen}
\RequirePackage[T1]{fontenc}
\RequirePackage[ngerman,english]{babel}
\RequirePackage{amsmath}
\RequirePackage{amssymb}
\RequirePackage{amsthm}
\RequirePackage{amsfonts}
\RequirePackage{graphicx}
\RequirePackage{color}
\RequirePackage{listings} 
\usepackage{epstopdf}

\usepackage{algcompatible}
\RequirePackage[section,Algorithm]{algorithm}
\RequirePackage{emptypage}

\usepackage[backend=biber,sorting=nyt]{biblatex}
\renewbibmacro{in:}{}
\DeclareFieldFormat[article]{title}{\textit{#1}} 
\DeclareFieldFormat[article]{journaltitle}{#1}
\theoremstyle{plain}						% Default
\newtheorem{theorem}{Theorem}[section]
\newtheorem{lemma}[theorem]{Lemma}

\newtheorem{definition}[theorem]{Definition}
\newtheorem{example}[theorem]{Example}
\newtheorem{remark}[theorem]{Remark}
\numberwithin{equation}{section}

\titleformat{\section}[block]{\normalfont\bfseries}{\thesection.}{0.5em}{}
\titlespacing{\section}{0pc}{1pc}{1pc}

\titleformat{\subsection}[block]{\normalfont\bfseries}{\thesubsection.}{0.5em}{}
\titlespacing{\subsection}{0pc}{1pc}{1pc}

\begin{document}
	
	\title{A Hyperbolic Transport Model for Passenger Flow on Tram Networks}
	
	\author{Thomas Schillinger\footnotemark[1]}
	
	\footnotetext[1]{University of Mannheim, Department of Mathematics, 68131 Mannheim, Germany (schillinger@uni-mannheim.de)}

	\date{ \today }
	\maketitle
	
	\begin{abstract}
		\noindent
		We introduce a modeling framework for an urban tram network based on a hyperbolic partial differential equation describing the transport of passengers along the network, coupled with a family of stochastic processes representing passenger boarding. Solutions are considered in a measure-valued sense. The system is further extended and subjected to uncertainties such as delays and service interruptions through a numerical study. Its robustness is assessed using appropriate risk measures.
	\end{abstract}
	
	{\bf AMS Classification.}  90B20, 65M06, 60K30 
	% Conservation laws, Traffic problems, Finite difference methods
	
	{\bf Keywords.} Hyperbolic transport equation on networks, passenger flow modeling,\\ measure-valued solutions
	
	%%%%%%%%%%%%%%%%%%%%%%%%%%%%%%%%%%%%%%%%%%%%
	
	\section{Introduction}	
	Modeling and analyzing passenger dynamics in urban mobility systems, such as tram or light rail networks, has become increasingly important due to rising demands on public transportation infrastructure, efficiency, and robustness. Urban mobility systems are characterized by strong spatial–temporal variability, stochastic demand patterns, and frequent operational disturbances. Mathematical modeling provides a systematic framework to understand the fundamental mechanisms governing passenger transport, congestion formation, and system bottlenecks, particularly under uncertainty and real-time variability.
	
	Hyperbolic partial differential equations are a well-established modeling paradigm for transport phenomena in mobility systems. The most prominent example is vehicular traffic flow, initiated by the seminal works of Lighthill, Whitham, and Richards \cite{Lighthill.1955,RICHARDS.1956} and further developed in numerous extensions and network formulations; see \cite{GARAVELLO.2016,GARAVELLO.2006} for comprehensive overviews. Related approaches have been proposed for pedestrian dynamics \cite{Colombo.2012}, and more recently, multi-modal traffic models incorporating additional mobility forms such as bicycle lanes have been studied \cite{Joumaa.2026}.
	Despite this extensive literature, the mathematical modeling of public transportation systems - particularly tram or light rail networks - has received comparatively less attention from a PDE-based perspective.
	
	Existing approaches to tram and railway systems are often formulated on the level of vehicle trajectories, timetables, or event-based models. In this context, ordinary differential equations typically appear implicitly, for instance as part of optimization or scheduling frameworks rather than as standalone dynamical models. A prominent line of research concerns the optimization of cyclic timetables via Periodic Event Scheduling Problems (PESP), introduced in \cite{Serafini1989} and actively studied in recent works, see e.g.\ \cite{bortoletto2024,Masing2023}. Robustness aspects of tram systems have been studied, for instance, for the Cologne tram network in \cite{Lueckerath2012} and under disruption scenarios in \cite{Roelofsen2018}. For a broader survey including physical and mechanical aspects of railway systems, we refer to \cite{Iwnicki.2019}. While these approaches are highly relevant for operational planning, they are not primarily designed to describe passenger transport dynamics along the network. To the best of our knowledge, a systematic PDE-based framework focusing on passenger transport in tram networks remains largely unexplored.
	
	A natural starting point for modeling spatial transport phenomena is the linear transport equation. In its simplest form, it describes the evolution of a transported quantity $\rho(x,t)$ in space and time. In the context of tram-based mobility systems, $\rho(x,t)$ may represent the number or density of onboard passengers at position $x$ and time $t$. Passenger boarding and alighting processes occur at discrete locations, namely at tram stops, and must therefore be incorporated through boundary conditions or localized source terms at the network nodes. The linear transport equation has been extensively studied in mathematical theory. Under suitable assumptions on the transport velocity, initial data, and source terms, the method of characteristics yields explicit solution representations \cite{LEVEQUE.2002}. For nonsmooth coefficients, existence and uniqueness of weak or distributional solutions have been analyzed, for instance, in \cite{DiPerna1989,Evers2015,Gugat.2015f}. Transport equations with uncertain velocities are considered in \cite{Dorini.2011}, optimal control problems based on linear transport equations are studied in \cite{LiRao2004}, and uncertainty in boundary data is addressed in \cite{Gottlich.2019b,Gottlich.2022b}. An application of a transport equation with a source term to vehicular traffic flow is presented in \cite{Herty2003}, where lane changes are modeled within a multi-lane traffic framework. These references represent only a small selection of the extensive literature on transport equations.
	
	Measure-valued solution concepts play a central role when classical or weak formulations are no longer adequate. For the linear transport equation, measure-valued solutions have been investigated in \cite{BERTSCH.2024,Evers2015}, and extensions to solution-dependent velocities are discussed in \cite{Evers2018}. Models with additional solution-dependent source terms are analyzed in \cite{Piccoli2014}. An important extension of transport equations to network structures was introduced in \cite{Camili2017} and further developed to include nonlocal flux functions in \cite{Camilli2018}. Measure-valued solutions for nonlinear transport equations have also been studied, see \cite{Gwiazda2012}.
	
	From a modeling perspective, measure-valued formulations provide a flexible framework for incorporating real-world features into transport models. In particular, passenger boarding and alighting at tram stops can be represented by Dirac measures in space and time, enforcing that passenger exchange occurs only at designated network nodes. Such singular boundary conditions typically fall outside the scope of standard weak formulations and require a more general notion of solution. This makes measure-valued approaches particularly well-suited for passenger transport modeling on public transportation networks.
	
	The present work investigates a hyperbolic partial differential equation - specifically, the linear transport equation - posed on a network. In Section \ref{sec: modellingTramNetwork} we equip it with additional coupling and admissibility conditions at the vertices, motivated by tram-based mobility systems. %Our aim is to bridge rigorous mathematical theory and practical transportation modeling. 
	Beyond analytical existence results for measure-valued solutions, we incorporate stochastic passenger inflow processes in Section \ref{sec: PDE}, inspired by the works \cite{Camili2017,Evers2015}. The impact of operational uncertainties, such as delays or partial service interruptions, are studied through numerical simulations. These are used to assess system robustness using suitable performance and risk measures related to passenger congestion in Section \ref{sec:numerik}.

	\section{Modeling a tram network}
	\label{sec: modellingTramNetwork}
	We consider a directed graph $\mathcal{G} = (\mathcal{V},\mathcal{E})$, where $\mathcal{V}$ are the vertices and $\mathcal{E}$ are the directed edges connecting vertices. Each vertex $v \in \mathcal{V}$ corresponds to one tram stop. The length of an edge $e \in \mathcal{E}$ is denoted by $l_e>0$ and describes the distance between two consecutive stops. %We define be $p(e)$ the set of predecessor edges of $e$, as long as $e$ is not the first edge in the network. 
	In addition $v^- \subset \mathcal{E}$ and $v^+\subset \mathcal{E}$ define the set of all ingoing and outgoing edges of $v \in \mathcal{V}$. We assume that edges are directed. In practice this maintains the idea of two individual tram tracks connecting two vertices, where the trams for the direction $v_1$ to $v_2$ all run on one track (edge), whereas the trams going from $v_2$ to $v_1$ use another track. 
	Each edge $e$ is represented by the interval $[0,l_e]$. 
	
	At each vertex $v \in \mathcal{V}$ passengers can board and alight the tram. %{Instead of considering the boarding and alighting of passengers at $v$, it will be useful to shift the getting on and off of the passengers at $x=0$ of each edge $e \in \mathcal{E}$.} 
	At special tram stations $\mathcal{V}^\text{start}\subset \mathcal{V}$ trams start their journeys. All source nodes of the graph belong to $\mathcal{V}^\text{start}$, but it is also possible that an inner vertex is a starting point for tram journeys. Similarly we define the set $\mathcal{V}^\text{terminal}\subset \mathcal{V}$ as the tram stations where trams terminate. Trams can move on the network with some given velocity. %The velocity is described by a family of piecewise Lipschitz, bounded and strictly positive functions $(w^e(x))_{e\in \mathcal{E}}: [0,l_e] \rightarrow \mathbb{R}_{>0}$. 
	The velocity is described by a family of strictly positive constants $(w^e)_{e\in \mathcal{E}}$. %\in \mathbb{R}_{>0}$. 
	Thus the trams move with a strictly positive but bounded velocity.
	
	\subsection{Admissible tram schedule}
	\label{sec: admissibleTramSchedule}
	In reality trams run according to an externally given tram schedule. To transfer this idea to the tram model we define a family of capacity functions $(\tau^e(t))_{e \in \mathcal{E}}$. These encode both the spatial-temporal movement of trams and the tram capacity at the origin vertex of each edge $e$ at time $t$. Specifically, $\tau^e(t) = 0$ if no tram departs along edge $e$ at time $t$, otherwise it equals the tram's capacity. Equivalently, this function can be interpreted as a \textbf{tram schedule}, providing a formal description of the network’s operational constraints.  
	Before proceeding, we introduce a weaker notion of injectivity that will be required for the subsequent analysis.
	\begin{definition}
		\label{def: injectiveEmptySet}
		Let $A,B$ be two finite sets. A function $f: A \mapsto \left( B \bigcup \emptyset \right)$ is called \textbf{injective except for the empty set} if
		\begin{align*}
			\forall x,y \in A: \quad f(x)=f(y) \land f(x) \ne \emptyset \Rightarrow x=y.
		\end{align*}
	\end{definition}
	\begin{definition}
		\label{def: admissibleSchedule}
		A family of functions $\tau = (\tau^e)_{e \in \mathcal{E}}$ where $\tau^e: [0,\infty) \rightarrow \mathbb{R}_{\geq 0}$ is called an \textbf{admissible tram schedule} if
		\begin{itemize}
			\item[a)] For any $e \in \mathcal{E}$, $\tau^e(t)<\infty$ for all $t \in [0,\infty)$.
				\item[b)] For any $t\in[0,\infty)$ and $v\in\mathcal{V}\backslash\mathcal{V}^\text{terminal}$, there exists a time parameterized \textit{injective except for the empty set} map $\delta_t^v: v^- \rightarrow \lbrace v^+ \bigcup \emptyset\rbrace$ such that for any $e \in v^-$ with $\tau^e\left(t-\frac{l_e}{w^e}\right) >0$ the function $\delta_t^v$ maps to exactly one of the outgoing edges from the set $v^+$. Furthermore it holds 
			\begin{align*}
				\tau^{\delta_t^v(e)}(t) = \tau^e\left(t-\frac{l_e}{w^e}\right).
			\end{align*}
			For any $e \in v^-$ with $\tau^e\left(t-\frac{l_e}{w^e}\right)=0$, we set $\delta_t^v(e) = \emptyset$. If $v \in \mathcal{V}^\text{terminal}$ then for any $e \in v^-$ with $\tau^e\left(t-\frac{l_e}{w^e}\right)>0$ the function $\delta_t^v$ may also map to $\emptyset$ (terminating tram; particularly $v^+$ may be empty). 
			\item[c)] The inverse of $(\delta_t^v)^{-1}: v^+ \rightarrow \lbrace v^- \bigcup \emptyset\rbrace$ is defined for any $v \in \mathcal{V} \backslash \mathcal{V}^\text{start}$ and $t\in[0,\infty)$ with $e^+ \in v^+$ by
			\begin{align*}
				(\delta_t^v)^{-1}(e^+) = \begin{cases}
					e^-, & \text{if there exists } e^-\in v^- \text{  with  } \delta_t^v(e^-) = e^+,\\
					\emptyset, & \text{else.}
				\end{cases}
			\end{align*}
			By injectivity except for the empty set according to Definition \ref{def: injectiveEmptySet} of $\delta_t^v$ the edge $e^- \in v^-$ is uniquely defined in the first case.
			%$e \in v^+$ for which $p(e)\neq \emptyset $, $\mu^e (0,t)>0$, we have that there exists (exactly one??) $\tilde{e} \in v^-$ such that
			%  \begin{align*}
				%      \mu^e(0,t) = \mu^{\tilde{e}}(l_{\tilde{e}} ,t)
				%  \end{align*}
			%  and furthermore the sum of all ingoing capacities should match the outgoing capacities
			%  \begin{align*}
				%      \sum_{e\in v^-} \mu^e(l_e, t) = \sum_{e\in v^+} \mu^e(l_e, t).
				%  \end{align*}
			% \item mit dem zweiten Teil der Definition kann innerhalb des systems  keine Straßenbahn starten
		\end{itemize}
		
	\end{definition}
		The time $t-\frac{l_e}{w^e}$ describes when the tram entered the previous edge $e$. %and is calculated by the fraction of length of $e$ and the velocity of the tram on %$e$.
		The map $\delta_t^v$ assigns to each edge from which a tram approaches a vertex $v \in \mathcal{V}$ the corresponding outgoing edge. If no tram approaches the vertex from a given edge, that edge is mapped to the empty set. This definition allows for more than one tram to arrive at a tram stop (which may occur due to multiple tracks at the stop), while it excludes the possibility that multiple trams leave the tram stop simultaneously on the same outgoing edge. This is ensured by the injectivity assumption, except for the empty set.
		
		Furthermore, this definition enforces the following properties: the incoming capacity from a single edge cannot split at the vertex but must be transferred entirely to exactly one outgoing edge. Consequently, trams consisting of multiple cars cannot be separated within the model. Likewise, the merging of different trams into a single tram is not permitted.

	\paragraph{Conditions at the vertex for $m-n$ junctions}
	\label{sec: junctionTypes}
	To illustrate how the admissible tram schedule affects the dynamics at vertices, we begin with the simplest 1-1 junctions and progressively consider more complex scenarios, allowing multiple incoming and outgoing edges. As prototypes of such networks, we discuss 1-2 and 2-1 junctions, before generalizing to arbitrary $m$-$n$ junctions.
	
	\paragraph{1-1 junction.}
	
	The 1-1 junction represents the simplest case. A sequence of 1-1 junctions models a tram network with a single isolated line, where every vertex corresponds to a tram stop. Here, $v^-$ and $v^+$ each contain exactly one edge, representing the unique predecessor and successor edge (except at $\mathcal{V}^\text{start}$ and $\mathcal{V}^\text{terminal}$). Denote these edges by $e^- \in v^-$ and $e^+ \in v^+$. Then, the function $\delta_t^v$ from Definition~\ref{def: admissibleSchedule} is defined by
	\begin{align*}
		\delta_t^v(e^-) = \begin{cases}
			e^+, & \tau^{e^+}(t) > 0,\\
			\emptyset, & \tau^{e^+}(t) = 0.
		\end{cases}
	\end{align*}
	
	\paragraph{1-2 junction.}
	
	In a 1-2 junction, a tram arriving from a single direction may proceed along one of two outgoing edges. For vertex $v \in \mathcal{V}$ and $e^- \in v^-$, let $e_1^+, e_2^+ \in v^+$ be the outgoing edges. Then
	\begin{align*}
		\delta_t^v(e^-) = \begin{cases}
			e_1^+, & \tau^{e_1^+}(t) > 0,\\
			e_2^+, & \tau^{e_2^+}(t) > 0,\\
			\emptyset, & \tau^{e_1^+}(t) = \tau^{e_2^+}(t) = 0.
		\end{cases}
	\end{align*}
	By construction, both $\tau^{e_1^+}(t)$ and $\tau^{e_2^+}(t)$ cannot be simultaneously positive, as trams do not split.
	
	\paragraph{2-1 junction.}
	
	A 2-1 junction has two incoming edges $e_1^-, e_2^- \in v^-$ and a single outgoing edge $e^+ \in v^+$. The admissibility condition (Definition~\ref{def: admissibleSchedule}) prevents two trams from simultaneously entering the junction and attempting to exit along the single outgoing edge. Formally, for $i = 1,2$,
	\begin{align*}
		\delta_t^v(e_i^-) = 
		\begin{cases}
			e^+, & \tau^{e_i^-}\Bigl(t-\frac{l_{e_i^-}}{w^{e_i^-}}\Bigr) > 0,\\
			\emptyset, & \text{otherwise.}
		\end{cases}
	\end{align*}
	By construction of an admissible schedule, it is impossible that both incoming capacities are simultaneously positive at the same time, ensuring that at most one tram can exit along $e^+$.
	
	\paragraph{m-n junction.}
	
	For general $m$-$n$ junctions, an explicit form of $\delta_t^v$ depends on the trajectories of the tram lines and their interactions at the junction. The function must be defined in a way that preserves the injectivity and admissibility conditions of the schedule. We provide Example \ref{ex:2-2junction} in the case of a 2-2 junction to illustrate this. 
	
	%The generalization of the previous sections are $m-n$-junctions for, $m,n \in \mathbb{N}$. TODOTODOTODO
	%The ideas from the three previous subcases can be applied here. todo
	%vielleicht auch ganz spannend (und dann am Beispiel Alte Feuerwache das aufziehen?)
	
	%Hier die Bedingungen aufschreiben, die wir an groeßeren Kreuzungen brauchen

	\begin{example}
		\label{ex:2-2junction}
		We consider a 2-2 junction at a vertex $v \in \mathcal{V}$ with $e_1^-, e_2^- \in v^-$ and $e_1^+,e_2^+ \in v^+$. Assume that there are four tram lines running at $v$ where each line is running every 10 minutes:
		\begin{itemize}
			\item Line 1: taking the route over $e_1^-$ and $e_1^+$, always at minute 1, 11, 21,...
			\item Line 2: taking the route over $e_1^-$ and $e_2^+$, always at minute 4, 14, 24,...
			\item Line 3: taking the route over $e_2^-$ and $e_1^+$, always at minute 6, 16, 26, ...
			\item Line 4: taking the route over $e_2^-$ and $e_1^+$, always at minute 4, 14, 24, ...
		\end{itemize}
		The lines at $v$ are graphically shown in Figure \ref{fig: Lines_example4}.
		
		\begin{figure}[ht!]
			\centering
			
			\begin{tikzpicture}[line width=2pt, scale=1, every node/.style={scale=1}]
				
				% Farben definieren
				\definecolor{tramred}{RGB}{220,30,30}
				\definecolor{tramgreen}{RGB}{0,150,0}
				\definecolor{tramblue}{RGB}{30,60,200}
				\definecolor{tramorange}{RGB}{255,130,0}
				
				% Haltestelle v
				\node[circle,draw=black,fill=white,minimum size=10mm] (v) at (0,0) {\phantom{\Large v}};
				
				% Kanten-Beschriftungen
				\node at (-3.25,1.2) {$e_1^-$};
				\node at (-3.25,-1.2) {$e_2^-$};
				\node at (3.1,1.4) {$e_1^+$};
				\node at (3.1,-1.3) {$e_2^+$};
				
				% Linie 1: e1- -> v -> e1+ (leicht oberhalb)
				\draw[tramred] (-3,1.15) -- (0,0.15) -- (3,1.15) node[midway, above, sloped] {\footnotesize Linie 1 (min 1)};
				
				% Linie 2: e1- -> v -> e2+
				\draw[tramgreen] (-3,1) -- (0,0) -- (3,-1) node[midway, above right, sloped] {\footnotesize Linie 2 (min 4)};
				
				% Linie 3: e2- -> v -> e1+
				\draw[tramblue] (-3,-1) -- (0,0) -- (3,1) 
				node[midway, below right, sloped,xshift = -6pt, yshift=-5pt]  {\footnotesize Linie 3 (min 6)};
				
				% Linie 4: e2- -> v -> e1+ (leicht unterhalb von Linie 3)
				\draw[tramorange] (-3,-1.15) -- (0,-0.15) -- (3,0.85) 
				node[pos=-0.35, sloped, below, xshift=-15pt, yshift=-5pt] {\footnotesize Linie 4 (min 4)};
				
			\end{tikzpicture}
			\caption{Tram line routes at the junction $v$. Line 1 (red), line 2 (green), line 3 (blue), line 4 (orange).}
			\label{fig: Lines_example4}
		\end{figure}
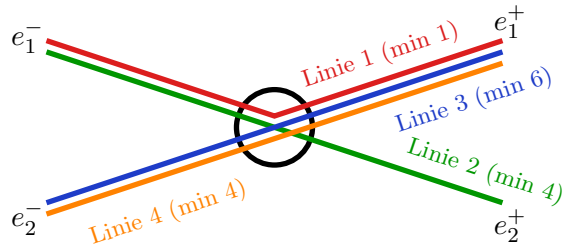
		% \begin{figure}
			%     \centering
			%     \includegraphics[width=0.5\linewidth]{netzkarte_tram_junction_adjusted.eps}
			%     \caption{Tram line routes at the junction. Line 1 (red), line 2 (green), line 3 (blue).}
			%     \label{fig: Lines_example}
			% \end{figure}
		%     \begin{figure}[ht!]
			%     \centering
			%     \includegraphics[width=0.5\linewidth]{netzkarte_tram_junction_adjusted_2.pdf}
			%     \caption{Tram line routes at the junction $v$. Line 1 (red), line 2 (green), line 3 (blue).}
			%     \label{fig: Lines_example}
			% \end{figure}
		Consider an one-hour time interval, $t\in [0,60]$. 
		Then the admissible tram schedule at the tram stops are given by the functions $\tau^{e_1^+}(t)$ and $\tau^{e_2^+}(t)$ and have the form
		\begin{align*}
			\tau^{e_1^+}(t) = \begin{cases}
				c_0^{l_1}, & t=1\\
				c_1^{l_1}, & t=11\\
				c_2^{l_1}, & t=21\\
				c_3^{l_1}, & t=31\\
				c_4^{l_1}, & t=41\\
				c_5^{l_1}, & t=51\\
				c_0^{l_3}, & t=6\\
				c_1^{l_3}, & t=16\\
				c_2^{l_3}, & t=26\\
				c_3^{l_3}, & t=36\\
				c_4^{l_3}, & t=46\\
				c_5^{l_3}, & t=56\\
				c_0^{l_4}, & t=4\\
				c_1^{l_4}, & t=14\\
				c_2^{l_4}, & t=24\\
				c_3^{l_4}, & t=34\\
				c_4^{l_4}, & t=44\\
				c_5^{l_4}, & t=54\\
				0, & \text{else}
			\end{cases}, ~~~\tau^{e_2^+}(t) = \begin{cases}
				c_0^{l_2}, & t=4\\
				c_1^{l_2}, & t=14\\
				c_2^{l_2}, & t=24\\
				c_3^{l_2}, & t=34\\
				c_4^{l_2}, & t=44\\
				c_5^{l_2}, & t=54\\
				0, & \text{else}
			\end{cases},
		\end{align*}
		with constants $0<c_j^{l_m}<\infty$ modelling the capacity of the $j$-th tram $(j=\lbrace 0,\dots,5 \rbrace)$ on line $m=1,2,3,4$.
		
		For $k = \lbrace 0,\dots,5 \rbrace$ the functions $\delta_t^v$ are defined in the following way
		\begin{align*}
			\delta_t^v(e_1^-) = \begin{cases}
				e_1^+, & t = 1 + 10k,\\
				e_2^+, & t = 4 + 10k,\\
				\emptyset, & \text{else}
			\end{cases}, ~~~~ 
			\delta_t^v(e_2^-) = \begin{cases}
				e_1^+, & t = 4 + 10k,\\
				e_1^+, & t = 6 + 10k,\\
				\emptyset, & \text{else.}
			\end{cases},
		\end{align*}
		Note that this schedule is admissible according to Definition~\ref{def: admissibleSchedule}, despite the fact that two tram lines arrive at the tram stop at time \( t = 4 \). At this time, one line runs from \( e_1^- \) to \( e_2^+ \), while the other runs from \( e_2^- \) to \( e_1^+ \). Hence, the injectivity of the map \( \delta_t^v \), except for the empty set, is not violated for \( t = 4 \). The schedule would no longer be admissible if the tram arriving from \( e_2^- \) were routed to \( e_2^+ \).
	\end{example}

	The definition of an admissible schedule now lays the foundation of the main goal of this work which is to describe the passenger behavior in a tram network.

\section{A system of hyperbolic balance laws for tram passengers}
\label{sec: PDE}

To model the evolution of passenger dynamics on the tram network, we first describe the movement of the trams themselves. Along the interior of each edge, tram motion is governed by a linear transport equation with a corresponding family of velocity functions $(w^e)_{e \in \mathcal{E}}$. Consequently, the evolution of the passenger mass along edge $e \in \mathcal{E}$ can also be described by a linear transport equation.

Let $(\rho^e)_{e \in \mathcal{E}} : [0,l_e] \times [0,\infty) \to \mathbb{R}_{\ge 0}$ denote the passenger mass on edge $e$ at position $x \in [0,l_e]$ and time $t>0$. Then
\begin{align}
	\label{eq: PDEedge}
	\rho^e_t(x,t) + \bigl(w^e\rho^e(x,t)\bigr)_x = 0,
\end{align}
%\begin{align}
%	\label{eq: PDEedge}
%	\rho^e_t(x,t) + \bigl(w^e(x)\,\rho^e(x,t)\bigr)_x = 0,
%\end{align}
where $w^e$ is a strictly positive constant for every edge. To account for passenger boarding and alighting, we introduce a source term on the right-hand side:
\begin{align}
	\label{eq: PDEedgeBalance}
	\rho^e_t(x,t) + \bigl(w^e\rho^e(x,t)\bigr)_x = g(x,t,\rho^e(x,t)).
\end{align}
Since tram stations are located at the vertices of the graph, we assume no passenger exchange occurs in the interior of an edge, i.e., $g(x,t,\rho^e) = 0$ for all $x \in (0,l_e)$. Passenger boarding at the origin vertex and alighting at the terminal vertex are modeled as Dirac measures at the endpoints:
\begin{align}
	\label{eq: PDEedgeBalanceParticular}
	\rho^e_t(x,t) + \bigl(w^e\rho^e(x,t)\bigr)_x 
	= b^e(t,\rho^e(0,t)) \,\varepsilon_0(x) - a^e(t,\rho^e(l_e,t)) \,\varepsilon_{l_e}(x),
\end{align}
where $\varepsilon_y$ denotes the Dirac measure at $y$. 

The functions $b^e$ and $a^e$ are discussed in Section~\ref{sec: boardingAlighting}. They represent the mass added to the system by boarding passengers at $x=0$ and removed by alighting passengers at $x=l_e$. Linear transport equations with source terms concentrated at points have been studied in \cite{Evers2015,Kazuaki2014}. This formulation naturally incorporates passenger exchanges at the vertices and, in principle, allows for non-permanent stops within $(0,l_e)$ without altering the network topology.
	%, which is initially — and unless otherwise specified — assumed to be constant with $w^e \equiv 1$ for any $e \in \mathcal{E}$. 

We equip each edge with an initial passenger density
\begin{align}
	\label{eq:initialDataPDEedge}
	\rho^e(x,0) = \rho_0^e(x).
\end{align}
As the spatial domain is bounded, we must also specify boundary conditions at \(x = 0\). In the standard literature, this is done by prescribing an external inflow \(f_{\mathrm{in}}^e\) such that
\begin{align}
	\label{eq:InflowPDEedge}
	w^e\rho^e(0,t) = f_{\mathrm{in}}^e(t).
\end{align}
As \(w^e > 0\), trams move forward along the edge, and therefore no boundary condition is required at \(x = l_e\), where only outflow occurs.

%At vertices representing tram stations, we impose flux conservation, taking into account the passenger boarding and alighting already incorporated in the right-hand side of Equation~\eqref{eq:PDEedgeBalanceParticular}. A detailed discussion of the network setting is provided in Section~\ref{sec: solNetwork}.

Thus far, we have introduced the linear hyperbolic balance law on each edge in Equations~\eqref{eq: PDEedgeBalanceParticular}--\eqref{eq:InflowPDEedge} and discussed the junction conditions in Section~\ref{sec: junctionTypes}. In the following subsection, we analyze the solution behavior of the family \((\rho^e)_{e\in\mathcal{E}}\), which is primarily determined by the boarding and alighting dynamics at the vertices. We first examine the behavior on a single edge, then the passenger dynamics at a tram station (which define the coupling conditions), and finally the full network solution in Section~\ref{sec: solNetwork}.

	\subsection{Measure-valued formulation}
	%For fixed $e\in \mathcal{E}$ and the corresponding vertex $v(e)$ passengers can only board or alight if $\tau^e(t)>0$. 
We have already observed that the only external influence on the passenger dynamics occurs at the spatial points \( x = 0 \) and \( x = l_e \) on each edge. 
Passengers may board or alight from the tram only at specifically designated times - namely, at the instants when a tram reaches a stop. 
To preserve the concept of a tram timetable, we recall Definition~\ref{def: admissibleSchedule}. 
The family of functions \( (\tau^e)_{e \in \mathcal{E}} \) is nonzero only at a finite number of time instants. 
These represent the times at which passengers instantaneously board or leave the tram. 
Moreover, the nonzero values of \( \tau^e \) reflect the tram’s capacity. 

It is important to note that our model does not assume that the tram physically stops at a station, in the sense that its transport velocity vanishes. 
Rather, we assume that boarding and alighting occur instantaneously at the prescribed times determined by the admissible schedule. 
%From a modeling perspective, one could also incorporate actual stopping dynamics, but this would require a substantially more involved analysis of the associated linear hyperbolic PDE.

In particular, \( \tau^e \) does not belong to the space \( C^k([0,\infty)) \) of continuous (or continuously differentiable) functions, nor to \( L^p([0,\infty)) \), the space of \( p \)-integrable functions on \( [0,\infty) \). 
Instead, the capacity functions are elements of the class of finite Borel measures. 
Consequently, in this chapter we do not describe the tram dynamics in terms of a density, that is, a mass-per-unit representation. Rather, we model the transport of measures concentrated at discrete points. This modeling choice modifies certain aspects of the previously introduced framework and is strongly inspired by the works of Camilli et al.\ \cite{Camili2017} and Evers et al.\ \cite{Evers2015}.
Before proceeding, we provide the definition of a Borel measure.
	
	\begin{definition}[Borel measure]
		Let \((X, \mathcal{T})\) be a topological space and \(\mathcal{B}(X)\) the Borel \(\sigma\)-algebra generated by the topology \(\mathcal{T}\).  
		A \emph{Borel measure} on \(X\) is a measure
		\[
		\nu : \mathcal{B}(X) \to [0, \infty]
		\]
		in the sense of measure theory, i.e., it satisfies:
		\begin{enumerate}
			\item \(\nu(\emptyset) = 0\),
			\item For every sequence of pairwise disjoint sets \((A_i)_{i \in \mathbb{N}} \subset \mathcal{B}(X)\), we have
			\[
			\nu\left( \bigcup_{i=1}^\infty A_i \right) = \sum_{i=1}^\infty \nu(A_i).
			\]
		\end{enumerate}
		The measure \(\nu\) is called \emph{finite} if
		\[
		\nu(X) < \infty.
		\]
		The set of all finite Borel measures on \(X\) is denoted by \(\mathcal{M}(X)\).
		We denote by $\mathcal{M}^+(\mathcal{T})$ the convex cone of
		the positive measures in $\mathcal{M} (\mathcal{T} )$.
	\end{definition}

	The Dirac measure \( \varepsilon_x \), concentrated at a point \( x \in X \), as well as any finite weighted sum of Dirac measures, belongs to the class of finite Borel measures. 
	In the modeling of tram networks, such measures play a crucial role, as we represent a tram as a moving atomic mass on the network. 
	Throughout the following, we refer to such measures as \emph{atomic measures}.
	For $\mu \in \mathcal{M}(\mathcal{T})$ and a bounded measurable function $\varphi: \mathcal{T} \rightarrow \mathbb{R}$ we write
	\begin{align*}
		\langle \mu, \varphi \rangle = \int_\mathcal{T}\varphi d\mu.
	\end{align*}
	Given a Borel measurable vector field $\Phi : \mathcal{T} \rightarrow \mathcal{T}$, the push-forward of the	measure $\mu$ under the action of $\Phi$ is an operation on $\mu$ which produces the new measure $(\Phi \# \mu)$ defined by
	\begin{align*}
		(\Phi \# \mu)(E) = \mu(\Phi^{-1}(E)), \qquad \forall E \in \mathcal{B}(\mathcal{T}).
	\end{align*}
	The Banach space of the bounded and Lipschitz continuous functions $\varphi: \mathcal{T} \rightarrow \mathbb{R}$ is equipped with the norm 
	\begin{align*}
		\| \varphi \|_{BL} = \|\varphi\|_\infty + |\varphi|_L,
	\end{align*}
	where $|\cdot|_L$ denotes the Lipschitz constant. A norm in $\mathcal{M}(\mathcal{T})$ is given by the dual norm of $\| \cdot \|_{BL}$ defined by
	\begin{align*}
		\|\mu\|^*_{BL} = \underset{\varphi \in BL(\mathcal{T}), \|\varphi\|_{BL} \leq 1}{\sup} <\mu,\varphi>.
	\end{align*}
	Note that $\mathcal{M}(\mathcal{T})$ is not complete. However, the space of positive measures $\mathcal{M}^+(\mathcal{T})$ is complete, but not a vector space (multiplication with negative scalars is not contained in $\mathcal{M}^+(\mathcal{T})$), hence not a Banach space. We put our focus on the latter, as we exclusively work with positive passenger masses. Furthermore the topology under consideration in the single edge case is $\mathcal{T} = [0,l_e] \times [0,\infty)$. 
	
	We reconsider the transport problem \eqref{eq: PDEedgeBalanceParticular}--\eqref{eq:InflowPDEedge} on a single edge given by \( [0,l_e] \) and interpret its solution in terms of finite Borel measures \( \nu^e \) on the Cartesian product of the spatial and temporal domains \( [0,l_e] \times [0,\infty) \):
	\begin{align}
	%	\begin{split}
		%	\label{eq:measureProblem}
			\nu^e_t + (w^e\nu^e)_x 
			&= b^e(t,\nu^e(0,t))\,\varepsilon_0(x) 
			- a^e(t,\nu^e(l_e,t))\,\varepsilon_{l_e}(x),
			\quad x \in [0,l_e],~ t \in (0,\infty),~ e \in \mathcal{E},\nonumber \\
			\nu^e_{t=0} 
			&= \nu^e_0 \in \mathcal{M}([0,l_e] \times \{0\}), 
			\qquad x \in [0,l_e],~ e \in \mathcal{E},\label{eq:measureProblem} \\
			\tilde{\nu}^e_{x=0} 
			&= \tilde{\nu}^e_0 \in \mathcal{M}(\{0\} \times [0,\infty)),
			\qquad t \in (0,\infty),~ e \in \mathcal{E}.\nonumber
	%	\end{split}
	\end{align}
	To describe the initial and boundary information in the sense of measures, we project the measure \( \nu^e \) onto the sets \( t = 0 \) and \( x = 0 \). 
	We denote by \( \nu^e_t \) the conditional measure with respect to time, defined by
	\begin{align*}
		\nu^e_t(dx) \otimes dt = \nu^e(dx\,dt),
	\end{align*}
	where \( dt \) denotes the Lebesgue measure in time, and \( \nu^e_t \in \mathcal{M}([0,l_e] \times \{t\}) \) is an atomic measure consisting of a finite sum of Dirac measures in space.
	
	Instead of prescribing the boundary condition at \( x = 0 \) via an inflow condition, we impose it analogously by introducing \( \tilde{\nu}^e_x \), the conditional measure with respect to space, defined by
	\begin{align}
		\label{eq:inflowMeasure}
		\frac{\tilde{\nu}^e_x(dt)}{w^e} \otimes dx = \nu^e(dx\,dt),
	\end{align}
	where \( dx \) is the Lebesgue measure in space, and \( \tilde{\nu}^e_x \in \mathcal{M}(\{x\} \times [0,\infty)) \) is again an atomic measure in time. 
	The denominator \( w^e \) accounts for the compression or stretching of the mass distribution along the temporal axis induced by the velocity field \( w^e \), ensuring that the conditional measure \( \tilde{\nu}^e_x \) accurately represents the total mass located on \( \{x\} \times [0,\infty) \). 
	In the case of atomic boundary masses, this factor has no substantial effect on the solution. 
	Specific forms of \( \tilde{\nu}^{e}_{x=0} \) will be discussed when extending the formulation to the full network in Section~\ref{sec: solNetwork}. 	At $x=l_e$ we have exclusively out-flowing information as we assume $w^e>0$. Therefore $\tilde{\nu}_{l_e}^e(t)$ is implicitly given by the solution of the problem and does not require external information except for the alighting passengers.
	
	For a more detailed discussion of measure spaces and their associated metrics, we refer to Section~3 of the work by Camilli et.al. in~\cite{Camili2017}.

	\subsection{Modeling of passengers at the vertex}
	\label{sec: boardingAlighting}
	For the passenger behavior, we make the following assumptions. 
	At each tram stop located at a vertex, passengers may board or alight from the tram governed by the right-hand side of the PDE. 
	Since trams operate at fixed service frequencies, passengers may have to wait for the next arriving tram. 
	We introduce a time-dependent queue \( q^{e}: [0,\infty) \rightarrow [0,\infty) \) for every outgoing edge \( e \in v^+ \) of a given vertex \( v \in \mathcal{V} \), representing the number of waiting passengers at the origin of edge \( e \). 
	Instead of associating the queue directly with the vertex, we attach it to the outgoing edges. 
	This allows for a clearer distinction at junctions with multiple outgoing connections, since passengers are typically waiting for trams heading in specific directions. 
	%Passengers are assumed to arrive randomly at the tram stop where the inter-arrival times of two passengers are exponentially distributed with parameter $\lambda^e(t)$.
	
	The queue dynamics are driven by two components: the passenger arrival process and the scheduled boarding events. 
	Passenger arrivals for trams heading along edge \( e \in v^+ \) are modeled by a time-inhomogeneous Poisson process \( X^e(t) \) with rate function \( \lambda^e(t) \), representing the expected number of arriving passengers per unit time. 
	At the scheduled stop times, passengers are allowed to board the tram, thereby reducing the queue. 
	The number of passengers boarding at time \( t \) is denoted by \( b^e(t) \). 
	The resulting queue dynamics can thus be expressed as
	\begin{align}
		\label{eq:queue}
		q^e(t) = X^e(t) - \int_0^t b^e(s, \nu^e(0,t))\, ds, 
		\qquad q^e(0) = q_0 \ge 0,
	\end{align}
	where \( X^e(t) \sim \text{Poisson}\!\left(\int_0^t \lambda^e(s)\, ds\right) \) denotes the cumulative number of arrivals up to time \( t \). 
	%The process \( q^e(t) \) is constrained to remain nonnegative, i.e., \( q^e(t) \ge 0 \) (see Lemma~\ref{lem:queue_pos}). 
	
	We assume that the rate function \( \lambda^e \) is piecewise constant on hourly intervals, as a continuously varying rate is difficult to estimate in practice.

	\begin{figure}[ht!]
		\centering
		\includegraphics[width=0.85\linewidth]{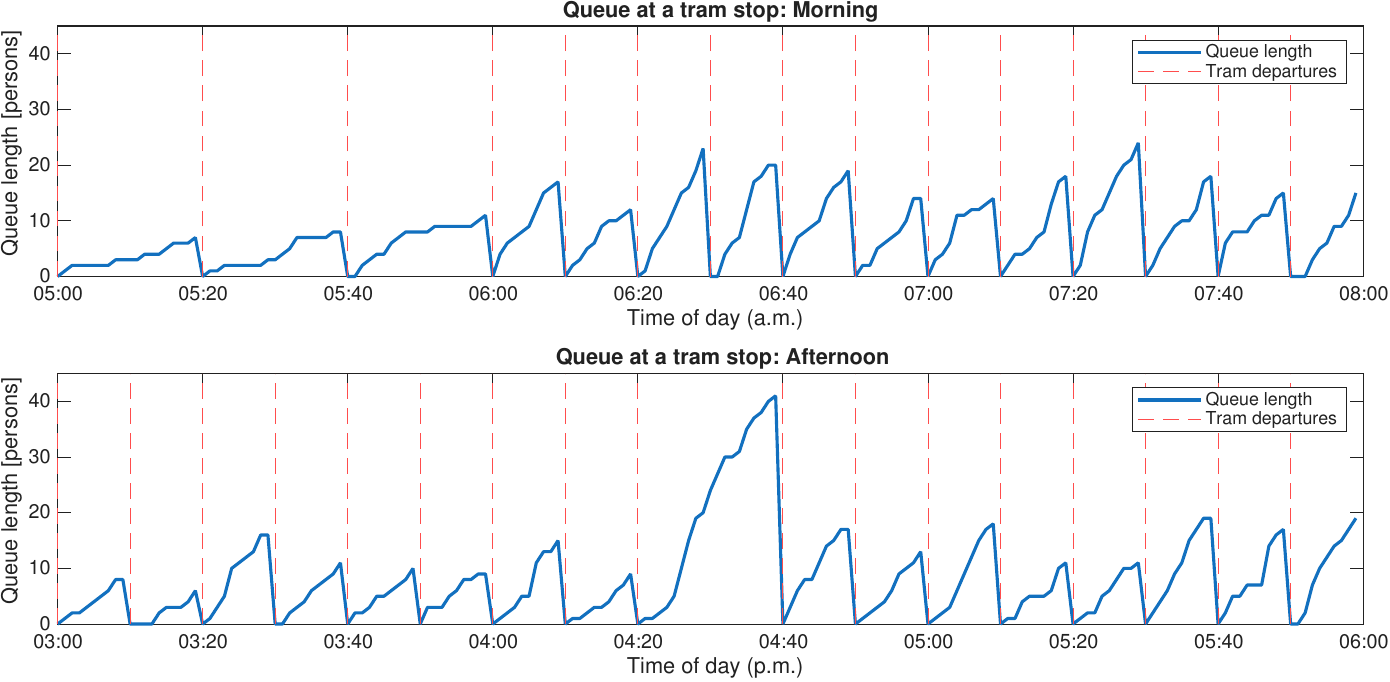}
		\caption{Exemplary evolution of the passenger queue at a tram station during the morning and afternoon periods, together with the corresponding tram departure times.}
		\label{fig:queuePlot}
	\end{figure}
	
	Figure~\ref{fig:queuePlot} illustrates an example of the queue evolution under varying hourly arrival rates and different tram service frequencies. 
	Between 5~and 8~a.m., passenger arrivals increase substantially during the morning peak period. 
	In the simulation, the mean arrival rate rises from approximately 0.1 passengers per minute during early morning hours to about 1.5 passengers per minute between 6~and 9~a.m. 
	To accommodate this increased demand, trams operate at a frequency of 20 minutes from 5~to 6~a.m., which is then reduced to 10 minutes between 6~and 8~a.m. 
	Similarly, during the afternoon peak between 3 and 6~p.m.\, the passenger arrival rate again increases to roughly 1.8 passengers per minute. 
	During this period, a 10-minute service frequency is maintained to ensure a higher service level matching the elevated demand. 
	As an example of possible service disruptions, the cancellation of a tram at 4:30~p.m. leads to a pronounced temporary increase in the queue length.
	
	For simplicity, and when considering the entire network, we assume that the family of arrival processes \( (X^e(t))_{e \in \mathcal{E}} \) is pairwise independent. 
	In real-world applications, this assumption is generally violated, since passenger arrivals may exhibit correlations across stations due to external factors such as weather conditions, special events, or public holidays. 
	A more comprehensive treatment of these correlations would require a deeper stochastic analysis beyond the present scope. Between two tram departures, passenger arrivals are assumed to be uniformly distributed over the entire inter-departure interval. 
	Although, in practice, arrivals are often more concentrated shortly before a tram’s departure, we deliberately abstract from such temporal variations. 
	For the purposes of our analysis, the assumption of uniformly distributed arrivals provides an adequate approximation.

	\paragraph{Boarding and alighting passengers}
	
The number of passengers boarding the tram at time \(t\) at the stop associated with edge \(e\) is given by
\begin{align}
	\label{eq:boarding}
	b^e(t, \nu^e(0,t))
	= \mathbbm{1}_{\{\tau^e(t) > 0\}}
	\min\left\{ q^e(t^-),\, \tau^e(t) - \nu^{e}(0,t) \right\}.
\end{align}
We distinguish two cases.
If, after the alighting process, sufficient capacity remains in the tram, the entire queue \(q^e(t^-)\) of waiting passengers for edge \(e\) can board. 
Here we define
\[
t^- := \lim_{s \nearrow t} s
\]
and use \(q^e(t^-)\) to denote the left limit of the queue length prior to boarding.
Otherwise, boarding continues only until the tram reaches its capacity limit.

Note that if \(\tau^e(t) = 0\), that is, if according to the admissible schedule no tram is present at the stop at time \(t\), it follows immediately that \(b^e(t) = 0\).  
Moreover, the number of boarding passengers is independent of the number of passengers already inside the tram, represented by \(\nu^e(0,t)\), as long as the overall capacity is not exceeded.

The number of passengers alighting from the tram at a given stop is modeled as a fraction of the passengers currently onboard. 
Instead of associating the alighting process with the outgoing edges of a vertex, we attach it to the incoming edge, which yields
\begin{align}
	\label{eq:alightingDiscrete}
	a^e(t, \nu^e(l_e,t))
	= \mathbbm{1}_{\left\{
		\tau^e\!\left(t - \frac{l_e}{w^e}\right) > 0
		\right\}}
	r_a^e(t)\, \nu^{e}(l_e,t),
\end{align}
where \( r_a^e(t) \in [0,1] \) denotes the deterministic alighting fraction. 
This fraction may depend on time and on the specific edge \(e\).% and is determined from available data.
The indicator in \eqref{eq:alightingDiscrete} ensures that no passengers alight at times when no tram is present at the stop.
At terminal stops, i.e., for edges \(e\) satisfying \( \delta_t^v(e) = \emptyset \), we set \( r_a^e(t) = 1 \), corresponding to all passengers leaving the tram.

Modeling alighting in this way and not using absolute values offers several advantages.
First, the number of alighting passengers is directly linked to the number of passengers currently onboard, thereby reflecting fluctuations in boarding activity.
Second, the formulation naturally captures gradual disembarkation as the tram approaches its terminal station.
This ensures internal consistency between the boarding and alighting mechanisms within the overall tram-network model that will be introduced later.
	
\begin{lemma}
	\label{lem:queue_pos}
	Let the arrival rates \(\lambda^e(t)\) of the Poisson process be uniformly bounded, and assume that the capacities prescribed by the admissible tram schedule \(\tau^e(t)\) are uniformly bounded as well. Then the queue \(q^e(t)\) defined in Equation~\eqref{eq:queue} is well-defined and nonnegative for all \(t \ge 0\).
\end{lemma}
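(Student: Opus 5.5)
The plan is to read \eqref{eq:queue} path-wise, with the integral understood as a sum over boarding events. Since $\tau^e$ is nonzero only at finitely many instants in any bounded interval (the nonzero values of the admissible schedule are atoms), the term $\int_0^t b^e\,ds$ is interpreted as the cumulative boarding $B^e(t)=\sum_{s\le t,\ \tau^e(s)>0} b^e(s,\nu^e(0,s))$, i.e.\ the integral against the atomic measure in time. The queue is then $q^e(t)=q_0+X^e(t)-B^e(t)$, and the proof has two parts: well-definedness, then nonnegativity.

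\textbf{Well-definedness.} Fix $T>0$. Because $\lambda^e\le\bar\lambda$, we have $\mathbb{E}[X^e(T)]\le\bar\lambda T<\infty$, so almost surely $X^e(T)<\infty$. Moreover $X^e$ is càdlàg, nondecreasing and has finitely many jumps on $[0,T]$, so left limits $q^e(t^-)$ exist. Let $t_1<t_2<\dots$ be the boarding instants in $[0,T]$, assumed finite in number.

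I then argue by induction over these instants. Between consecutive instants, $q^e$ changes only through $X^e$, so it is determined. At each $t_k$, the quantity $b^e(t_k)=\min\{q^e(t_k^-),\tau^e(t_k)-\nu^e(0,t_k)\}$ is finite, because both $q^e(t_k^-)$ and $\tau^e(t_k)\le\bar\tau$ are finite. Hence $q^e(t_k)=q^e(t_k^-)-b^e(t_k)$ is determined. Since $T$ is arbitrary, $q^e$ is defined on $[0,\infty)$.

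\textbf{Nonnegativity.} This also goes by induction over the boarding instants. Away from them $q^e$ is nondecreasing, starting from $q_0\ge0$. At $t_k$ the value $b^e(t_k)\le q^e(t_k^-)$ by the minimum, so $q^e(t_k)\ge0$.

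\textbf{Main obstacle.} The hard step is $b^e\ge0$, i.e.\ $\nu^e(0,t)\le\tau^e(t)$: the onboard mass before boarding must not exceed capacity. I would obtain this from the admissible schedule, Definition~\ref{def: admissibleSchedule}~b). The tram arriving at $t$ has capacity $\tau^e(t)=\tau^{e^-}(t-l_{e^-}/w^{e^-})$, and its load was at most that capacity by the previous boarding step, taking the induction over the network in time order. Alighting with $r_a\in[0,1]$ only decreases the load. If this coupling is unavailable, I would instead take the minimum with $(\tau^e(t)-\nu^e(0,t))^+$.

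A further technical point is the finiteness of boarding instants on bounded intervals. I would take this as part of the schedule assumption, since the paper states that $\tau^e$ is nonzero only at finitely many instants.
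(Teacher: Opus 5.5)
Your proposal is correct and takes essentially the same route as the paper. Both arguments use almost-sure finiteness of $X^e$ on $[0,T]$ from the bounded rate, finitely many boarding instants, and an induction over those instants using $b^e(t_k)\le q^e(t_k^-)$ from the minimum; your observation that $q^e$ is nondecreasing between instants is a cleaner version of the paper's interval step. Note that this induction never uses $b^e\ge 0$, so your ``main obstacle'' is not actually needed for the lemma: the paper simply assumes $b^e\ge 0$ to obtain its bound $q^e\le X^e(T)$, and your inductive construction of $q^e$ makes that bound unnecessary.
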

	\begin{proof}
		Let \(T>0\) be arbitrary. By assumption the arrival rates \(\lambda^e(t)\) are uniformly bounded, hence the inhomogeneous Poisson process \(X^e(t)\) is a.s.\ finite on \([0,T]\).  Since \(b^e(t,\nu^e(0,t))\ge 0\) for all \(t\), we have for every \(t\in[0,T]\)
		\[
		q^e(t)=X^e(t)-\int_0^t b^e(s,\nu^e(0,s))\,\mathrm ds \le X^e(t) \le X^e(T),
		\]
		so \(\sup_{0\le t\le T} q^e(t)\le X^e(T)<\infty\) a.s.
		Furthermore we a.s. get
		\begin{align*}
			\int_0^t b^e(s, \nu^e(0,s)) ds &= \int_0^t \mathbbm{1}_{\lbrace\tau^e(s)>0\rbrace} \min\lbrace  q^e(s) , \tau^e(s)-\nu^{e}(0,s)   \rbrace ds\\
			&= \sum_{\lbrace s \geq0~ :~ \tau^e(s)>0, s\leq t\rbrace } \min\lbrace  q^e(s) , \tau^e(s)-\nu^{e}(0,s) \rbrace\\
			&\leq \sum_{\lbrace s \geq0~ :~ \tau^e(s)>0, s\leq t\rbrace } q^e(s)<\infty,
		\end{align*}
		as the sum consists of only finitely many points, due to the finitely many times at which a tram stops.
		
		It remains to show the positivity of the queue. Denote the elements of 
		\(\{ s \ge 0 : \tau^e(s) > 0, s \le t \}\) by \(s_1 < s_2 < \dots < s_k\). 
		For \(t < s_1\), we have \(\int_0^t b^e(s,\nu^e)\, ds = 0\), and thus 
		\(q^e(t) = X^e(t) - 0 \ge 0\).
		At \(t = s_1\) we have
		\[
		\int_0^{s_1} b^e(s,\nu^e(0,s))\, ds = b^e(s_1, \nu^e(0,s_1))
		= \min \{ q^e(s_1^-), \tau^e(s_1) - \nu^e(0,s_1) \} \le q^e(s_1^-),
		\]
		so \(q^e(s_1) \ge 0\).	Now assume that \(q^e(t) \ge 0\) for all \(t \in [0, s_i]\) almost surely. For \(t \in (s_i, s_{i+1})\), we have
		\[
		\int_0^t b^e(s, \nu^e(0,s))\, ds = \sum_{k=1}^i b^e(s_k, \nu^e(0,s_k)) \le q^e(s_i)
		\]
		by the induction hypothesis. Therefore, we get a.s.
		\[
		q^e(t) = X^e(t) - \int_0^t b^e(s,\nu^e(0,s))\, ds \ge X^e(t) - q^e(s_i) \ge 0.
		\]
		At \(t = s_{i+1}\), we have
		\begin{align*}
			&\int_{(s_i, s_{i+1}]} b^e(s, \nu^e(0,s))\, ds 
			= b^e(s_{i+1}, \nu^e(0,s_{i+1})) \\
			&= \min \{ q^e(s_{i+1}^-), \tau^e(s_{i+1}) - \nu^e(0,s_{i+1}) \} \le q^e(s_{i+1}^-),
		\end{align*}
		so \(q^e(s_{i+1}) \ge 0\) almost surely.
	
		Repeating this argument inductively over all boarding times completes the proof.
		
	\end{proof}

By the construction of the boarding and alighting passengers, the function 
\( g(x,t,\nu^e) \) in Equation~\eqref{eq: PDEedgeBalanceParticular} is given as the sum of two families of weighted Dirac measures, 
whose masses are located at the initial and terminal points of the edge, 
and at those times \( t \) for which \( \tau^e(t) > 0 \). Consequently, we have
\[
b^e \in \mathcal{M}(\{0\} \times [0,\infty)) 
\quad \text{and} \quad 
a^e \in \mathcal{M}(\{l_e\} \times [0,\infty)).
\]
We denote by \( \mathcal{T}(e) \) the finite set of time points at which \( \tau^e \) is nonzero, 
and by \( |\mathcal{T}(e)| \) its cardinality. 
The latter corresponds to the number of trams stopping at the vertex \( v(e) \) 
and departing along edge \( e \) within the considered time horizon \( [0,T] \).

The measure of boarding passengers along edge \( e \) can therefore be represented as a finite sum of weighted Dirac measures,
\begin{align*}
	b^e = \sum_{i=1}^{|\mathcal{T}(e)|} \beta^e_i\, \varepsilon_{t_i},
\end{align*}
where \( t_i \in \mathcal{T}(e) \) denote the boarding times and 
\( \beta^e_i \in \mathbb{R}_{\ge 0} \) the corresponding boarding masses, i.e., 
the number of passengers boarding at time \( t_i \). 
The values \( \beta^e_i \) are determined by the passenger arrival Poisson process introduced in Equation~\eqref{eq:queue} and the number of boarding passengers according to Equation \eqref{eq:boarding}.

Similarly, the alighting process can be expressed as
\begin{align*}
	a^e = \sum_{i=1}^{|\mathcal{T}(e)|} 
	\alpha^e_i\, 
	\varepsilon_{t_i + \frac{l_e}{w^e}},
\end{align*}
where \( \alpha^e_i \in \mathbb{R}_{\ge 0} \) denote the corresponding alighting masses, 
representing the number of passengers disembarking at time 
\( t_i + \frac{l_e}{w^e} \), 
that is, when the tram reaches the end of edge \( e \). 

This atomic representation of boarding and alighting processes provides a natural connection 
between the continuous network dynamics and the discrete tram schedule, 
while maintaining a consistent measure-theoretic framework that is further elaborated in the next section.

	\subsection{Weak measure-valued solution on one edge}
	%{\color{green} sowas schreiben, wie die Lösung setzt sich auf den Diracs aus den Anfangsdaten und den Randdaten zusammen und diese werden dann transportiert. Anfangdatum über gammas einbauen?}

	We set up the definition of a measure-valued solution. For brevity, we omit the explicit dependence on $\nu^e(0,t)$ and $\nu^e(l_e,t)$ in $b^e$ and $a^e$ respectively.
	Starting from the problem
	\begin{align*}
		\nu^e_t(x,t) + (w^e \nu^e(x,t))_x = b^e(t)\varepsilon_0(x) - a^e(t)\varepsilon_{l_e}(x),
	\end{align*}
	we multiply with a test function $\varphi^e \in C_0^1([0,l_e] \times [0,\infty))$ which in particular belongs to the class of bounded Lipschitz functions, and integrate over the space-time domain.
	Integration by parts, together with the properties of the test functions, then leads to the following definition of a weak measure-valued solution on a single edge.

	\begin{definition}
		\label{def: weakMeasureValuedSolutionOneEdge}
		A weak measure-valued solution to Equation \eqref{eq:measureProblem} is given by a Borel measure  $\nu^e \in \mathcal{M}([0,l_e] \times [0,\infty))$ satisfying
		\begin{align*}
			&\int_0^{l_e}\int_0^\infty \varphi^e_t(x,t) + w^e\varphi^e_x(x,t)d\nu^e(x,t) \\
			&= \int_0^{l_e} \varphi^e(x,0)d\nu^e_{t=0}(x) - \int_0^\infty \varphi^e(l_e,t)d\tilde{\nu}^e_{x=l_e}(t) -\varphi^e(0,t)d\tilde{\nu}^e_{x=0}(t)\\
			& ~~+  \int_0^\infty \varphi^e(0,t) db^e(t) - \int_0^\infty  \varphi^e(l_e,t)da^e(t)
		\end{align*}
		for all test functions $\varphi^e \in C_0^1([0,l_e] \times [0,\infty))$. 
	\end{definition}
The outflow measure $\tilde{\nu}^e_{x=l_e}$ is induced by the system through the strictly positive velocity $w^e$. We slightly modify the construction of the push-forward and let it act exclusively on the spatial variable, while treating time as a parameter. The push-forward solution (or trajectory) for our problem is denoted by
	\begin{align}
		\label{eq:PushForward}
		\Phi_t^e(x_0) = x_0 + t w^e,
	\end{align}
	where we start in position $x_0$. The push-forward of the
	measure $\nu^e(A,s)$ under the action of $\Phi^e_t$ is an operation on $\nu^e$ which produces the new measure $\Phi^e_t \# \nu^e \in \mathcal{M}([0,l_e])$ and is defined by
	\begin{align*}
		(\Phi^e_t \#\nu^e)(A,s) = \nu^e((\Phi^e_t)^{-1}(A),s),
	\end{align*}
	for any Borel set $A \in \mathcal{B}([0,l_e])$. Due to the easy linear structure for a one-point mass $\lbrace x \rbrace \in [0,l_e]$ we can write
	\begin{align}
		\label{eq:PushforwardLinear}
		\nu^e(x,s+t) = (\Phi^e_t \#\nu^e)(\lbrace x \rbrace,s) = \nu^e((\Phi^e_t)^{-1}(\lbrace x \rbrace),s) = \nu^e(x-tw^e,s).
	\end{align}
Similar problems to the one in our investigation have been considered in literature. In the work by Camilli, et.al.\ \cite{Camili2017} in Theorem 4.2 and Theorem 4.3 they construct the unique measure-valued solution to the problem \eqref{eq:measureProblem} for a zero right-hand side. 

For solutions to problems of our type \eqref{eq:measureProblem} with non-zero right-hand side we consider the work of Evers et.al. \cite{Evers2015}. %Instead of working with the definition of measure-valued solutions using the integration over test functions, they prefer to work in a measure-theoretic framework using push-forward measures. This is an equivalent way to understand measure-valued solutions to linear transport equations, but will not further be discussed here (see \cite{Camili2017}).
In particular, they consider right-hand sides that are nonzero only at the boundary. 
Proposition 3.3 in \cite{Evers2015} shows that if the right-hand side $g(x,t,\nu^e)$ can be decomposed into $g(x,t,\nu^e(x,t)) = f(x)\cdot \nu^e(x,t)$ with a piecewise bounded Lipschitz function and $w^e(x)>0$, then there exists a solution to \eqref{eq: PDEedgeBalanceParticular} in the sense of a push-forward measure. In our case we can find a decomposition, but $f$ is clearly not Lipschitz as it non-zero in two individual points. But following Lemma 4.1 in \cite{Evers2015}, if we can find a sequence $(f_n)_{n\in \mathbb{N}}$ of piecewise bounded Lipschitz functions that converges to $f$ pointwise in a suitable norm ($BL^*$-norm).
	
Inspired and following those two ideas we provide the unique solution measure and prove that it satisfies Definition \ref{def: weakMeasureValuedSolutionOneEdge}.
The solution to our problem is given by four components, the initial information $\nu^{e,1}$, the boundary information from the left $\nu^{e,2}$ (due to positive speed) and the boarding $\nu^{e,3}$ and alighting passengers $\nu^{e,4}$. 
For all components, we can construct solutions separately and then add them, 
since the dynamics along each edge are governed by a linear partial differential equation. For $A \in \mathcal{B}([0,l_e])$ we have
\begin{align*}
	\nu^e(A,t) &= \nu^{e,1}(A,t) + \nu^{e,2}(A,t) + \nu^{e,3}(A,t) - \nu^{e,4}(A,t)\\
	&= \int_0^{l_e} \varepsilon_{\Phi_t^e(y,0)}(A) d\nu^e_{t=0}(y) + \int_0^t  \varepsilon_{\Phi_t^e(0,s)}(A)d\tilde{\nu}^e_{x=0}(s) \\
	&~~~+ \int_0^t  \varepsilon_{\Phi_t^e(0,s)}(A)db^e(s) - \int_0^t  \varepsilon_{\Phi_t^e(l_e,s)}(A)da^e(s).
\end{align*}
As we work on a bounded spatial domain, the integrals might not be well-suited. We introduce the quantities 
\begin{align*}
	\theta^e(x) &= \inf \lbrace t\geq 0~:~ \Phi_t^e(x,0) = 1 \rbrace,\\
	\sigma^e(s) &= \inf \lbrace t\geq s~:~ \Phi_t^e(0,s) = 1 \rbrace.
\end{align*}
to restrict the integration to the area of interest. As the velocity $w^e$ does not depend on the measure itself, the functions $\theta^e, \sigma^e$ are one-to-one and have well-defined inverses.
The solution measure is described by
\begin{align}
	\label{eq:SolutionMeasure_mit4}
	\begin{split}
		\nu^e(A,t) &= \nu^{e,1}(A,t) + \nu^{e,2}(A,t) + \nu^{e,3}(A,t) - \nu^{e,4}(A,t)\\
		&= \int_0^{\max \lbrace 0, (\theta^e)^{-1}(t)   \rbrace} \varepsilon_{\Phi_t^e(y,0)}(A) d\nu^e_{t=0}(y) + \int_{\max \lbrace 0, (\sigma^e)^{-1}(t) \rbrace}^t  \varepsilon_{\Phi_t^e(0,s)}(A)d\tilde{\nu}^e_{x=0}(s) \\
		&~~~+ \int_{\max \lbrace 0, (\sigma^e)^{-1}(t) \rbrace}^t  \varepsilon_{\Phi_t^e(0,s)}(A)db^e(s) - \int_{t}^t  \varepsilon_{\Phi_t^e(l_e,s)}(A)da^e(s). %\varepsilon_{l_e}(A)a^e(t).
	\end{split}
\end{align}
Note that the last term stemming from $\nu^{e,4}$ vanishes as it exclusively affects the outflow and not the solution on the edge. Thus it remains
\begin{align}
	\begin{split}
		\label{eq:SolutionMeasure}
		\nu^e(A,t) &= \int_0^{\max \lbrace 0, (\theta^e)^{-1}(t)   \rbrace} \varepsilon_{\Phi_t^e(y,0)}(A) d\nu^e_{t=0}(y) + \int_{\max \lbrace 0, (\sigma^e)^{-1}(t) \rbrace}^t  \varepsilon_{\Phi_t^e(0,s)}(A)d\tilde{\nu}^e_{x=0}(s) \\
		&~~~+ \int_{\max \lbrace 0, (\sigma^e)^{-1}(t) \rbrace}^t  \varepsilon_{\Phi_t^e(0,s)}(A)db^e(s).
	\end{split}
\end{align}
%	\textcolor{red}{Hier muss ich jetzt schauen, ob die Integralgrenzen passen! Nacharbeit erforderlich}
We can show that this measure stays positive and bounded.

	\begin{lemma}
	Without loss of generality, assume that we start in a system without traveling passengers, i.e.\ $\nu^e_{t=0}=0$. For any $x \in [0,l_e], t\geq 0, e \in \mathcal{E}$ it holds $0\leq \nu^e(x,t) \leq \tau^{e}\left(t-\frac{x}{w^e}\right)$. 
\end{lemma}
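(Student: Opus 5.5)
With $\nu^e_{t=0}=0$ the first term of the solution formula \eqref{eq:SolutionMeasure} drops out. I would then evaluate the remaining terms on the singleton $A=\{x\}$ and use the linear push-forward identity \eqref{eq:PushforwardLinear}. Since $\Phi_t^e(0,s)=(t-s)w^e$, the Dirac $\varepsilon_{\Phi_t^e(0,s)}(\{x\})$ is nonzero only for $s=t-\frac{x}{w^e}$. Both boundary integrals therefore collapse to the atoms of $\tilde\nu^e_{x=0}$ and $b^e$ at the single time $s^\ast:=t-\frac{x}{w^e}$, provided $s^\ast$ lies in the integration window. This gives the representation
\begin{align*}
\nu^e(x,t)=\tilde\nu^e_{x=0}(\{s^\ast\})+b^e(s^\ast),
\end{align*}
with the convention that both terms vanish if $s^\ast<0$, i.e.\ if the relevant tram never entered the edge. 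In particular, if $\tau^e(s^\ast)=0$, then $b^e(s^\ast)=0$ by \eqref{eq:boarding}. The inflow atom is also zero, since no tram departs along $e$ at that time. Hence $\nu^e(x,t)=0$, and the claim is trivial in that case.

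For the lower bound, I would argue that every ingredient is nonnegative. By Lemma \ref{lem:queue_pos} we have $q^e(s^{\ast-})\ge 0$, and it remains to see $\tau^e(s^\ast)-\nu^e(0,s^\ast)\ge 0$, which the upper bound at $x=0$ will give. The inflow atom $\tilde\nu^e_{x=0}(\{s^\ast\})$ is, by the network coupling, the onboard mass remaining after alighting on the predecessor edge. It has the form $(1-r_a^{e^-})\nu^{e^-}(l_{e^-},s^\ast)\ge 0$ (or $\nu^e_0$ data that we take nonnegative). So the minimum in \eqref{eq:boarding} is nonnegative, and $\nu^e(x,t)\ge 0$.

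For the upper bound, the key identity is $\nu^e(0,s^\ast)=\tilde\nu^e_{x=0}(\{s^\ast\})$, the onboard mass at departure before boarding. Then
\begin{align*}
\nu^e(x,t)=\nu^e(0,s^\ast)+\min\{q^e(s^{\ast-}),\,\tau^e(s^\ast)-\nu^e(0,s^\ast)\}\le \tau^e(s^\ast).
\end{align*}
This requires only that the second argument of the minimum be nonnegative, i.e.\ $\nu^e(0,s^\ast)\le\tau^e(s^\ast)$. This is the main obstacle, because it is a statement about the incoming mass and must be proved by induction over the finitely many departure times $\mathcal T(e)$ across the network, ordered in time. The inductive step goes as follows. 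The incoming mass at $v$ is $(1-r_a^{e^-})\nu^{e^-}(l_{e^-},s^\ast)\le \nu^{e^-}(l_{e^-},s^\ast)$ with $r_a^{e^-}\in[0,1]$. The induction hypothesis on $e^-$ bounds this by $\tau^{e^-}(s^\ast-\frac{l_{e^-}}{w^{e^-}})$. Admissibility, Definition \ref{def: admissibleSchedule} b), identifies this with $\tau^e(s^\ast)$ via $\delta^v_{s^\ast}(e^-)=e$. Injectivity except for the empty set ensures that only one predecessor contributes, so masses cannot add up. At start vertices with no predecessor the inflow is zero. The induction is well-founded because every edge has travel time $l_e/w^e>0$ and only finitely many trams run in $[0,T]$.
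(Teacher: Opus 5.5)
Your proposal is correct and follows the paper's proof. Both drop the initial-data term, write $\nu^e(x,t)$ via the push-forward as inflow plus boarding at $s^\ast=t-\frac{x}{w^e}$, bound the boarding by the minimum in \eqref{eq:boarding}, and propagate $\tilde\nu^e_{x=0}\le\tau^e$ and nonnegativity through the outflow formula and the coupling conditions; your explicit time-ordered induction using admissibility b) and injectivity is a sharper version of the paper's appeal to ``conservation of mass and the coupling conditions''.

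Two small repairs are needed. First, run the induction jointly for both bounds, since the step $(1-r_a^{e^-})\nu^{e^-}\le\nu^{e^-}$ needs $\nu^{e^-}\ge0$; alternatively bound $(1-r_a^{e^-})\nu^{e^-}\le(1-r_a^{e^-})\tau^{e^-}\le\tau^{e^-}$ as the paper does. Second, denote the pre-boarding mass by $\tilde\nu^e_{x=0}(\{s^\ast\})$ instead of calling $\nu^e(0,s^\ast)=\tilde\nu^e_{x=0}(\{s^\ast\})$ an identity, since your own representation gives $\nu^e(0,s^\ast)=\tilde\nu^e_{x=0}(\{s^\ast\})+b^e(s^\ast)$.
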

\begin{proof}
	In the representation of Equation \eqref{eq:SolutionMeasure} the first integral vanishes by the assumption of an empty system at $t=0$. For the boarding passengers we have according to Equation \eqref{eq:boarding}
	\begin{align*}
		b^e(t,\tilde{\nu}_{x=0}^e(t)) &= \mathbbm{1}_{\{\tau^e(t) > 0\}} 
		\min\left\{ q^e(t^-),\, \tau^e(t) - \tilde{\nu}_{x=0}^e(t) \right\}\\
		&\leq \tau^e(t) - \tilde{\nu}_{x=0}^e(t).
	\end{align*}
		Then we get
	\begin{align*}
		\nu^e(0,t) = \tilde{\nu}_{x=0}^e(t) + b^e(t) \leq \tilde{\nu}_{x=0}^e(t) + \tau^e(t) - \tilde{\nu}_{x=0}^e(t) = \tau^e(t).
	\end{align*}
	By the push-forward description in Equation \eqref{eq:PushforwardLinear} we have
	\begin{align*}
		\nu^e(x,t) = \begin{cases}
			\nu^e\left(0,t-\frac{x}{w^e}\right), & t-\frac{x}{w^e}\geq 0\\ 0, & t-\frac{x}{w^e}<0
		\end{cases}~~ \leq \tau^e\left(t-\frac{x}{w^e}\right).
	\end{align*}
	with the convention that $\tau^e(t)$ is set to 0 for $t<0$. 
	
	As the alighting passengers reduce the traveling passengers according to Equation \eqref{eq:alightingDiscrete} by a share $0\leq r_a^e(t)\leq1$, we obtain that
	\begin{align*}
		\tilde{\nu}^e_{x=l_e}(t) &= \left(1-\mathbbm{1}_{\tau^e\left(t-\frac{l_e}{w^e}\right)}r_a^e(t)\right)\nu^e(l_e,t)\\
		&\leq \left(1-\mathbbm{1}_{\tau^e\left(t-\frac{l_e}{w^e}\right)}r_a^e(t)\right) \tau^e\left(t-\frac{l_e}{w^e}\right) \leq \tau^e\left(t-\frac{l_e}{w^e}\right).
	\end{align*}
	By conservation of mass and the coupling conditions at the junction we get that
	\begin{align*}
		\tau^e(t) - \tilde{\nu}_{x=0}^e(t)\geq 0.
	\end{align*}
	By Lemma \ref{lem:queue_pos} we know that the queue $q^e$ remains nonnegative, therefore $b^e(t)\geq 0$. Thus within the edge and ignoring potential inflow we directly also have $\nu^e(x,t)\geq 0$. Building on this assumption for the outflow we get
	\begin{align*}
		\tilde{\nu}^e_{x=l_e}(t) = \underbrace{\left(1-\mathbbm{1}_{\tau^e\left(t-\frac{l_e}{w^e}\right)}r_a^e(t)\right)}_{\geq 0}\underbrace{\nu^e(l_e,t)}_{\geq 0} \geq 0.
	\end{align*}
	Again by mass conservation also for all inflows we get $\tilde{\nu}^e_{x=0}(t)\geq 0$. 
	We get 
	\begin{align*}
		\nu^e(x,t) = \begin{cases}
			\nu^e\left(0,t-\frac{x}{w^e}\right) \geq  \tilde{\nu}_{x=0}^e\left(t-\frac{x}{w^e}\right) + b^e\left(t-\frac{x}{w^e}\right), & t-\frac{x}{w^e}\geq 0\\ 0, & t-\frac{x}{w^e}<0
		\end{cases} ~~ \geq 0.
	\end{align*}
\end{proof}
We can show that this is exactly the unique measure-valued solution to the tram problem on one edge.
	\begin{theorem}
		\label{thm:UniqueSolutionOneEdge}
		The positive Borel measure described by Equation \eqref{eq:SolutionMeasure} is the unique measure-valued solution to the tram network problem \eqref{eq:measureProblem} on one edge according to Definition \ref{def: weakMeasureValuedSolutionOneEdge}.
	\end{theorem}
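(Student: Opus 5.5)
The plan has two parts: existence, by verifying the weak formulation of Definition~\ref{def: weakMeasureValuedSolutionOneEdge} directly for the measure in Equation~\eqref{eq:SolutionMeasure}, and uniqueness, by a linearity/duality argument.

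For existence I use the linearity of the transport operator and treat the three components of \eqref{eq:SolutionMeasure} separately. These are the transported initial datum $\nu^{e,1}$, the transported inflow $\nu^{e,2}$, and the transported boarding masses $\nu^{e,3}$. The alighting term $\nu^{e,4}$ only enters through the outflow, via $a^e$ and $\tilde{\nu}^e_{x=l_e}$. Each component is a superposition of single Dirac masses moving along the characteristics $\Phi_t^e$ from \eqref{eq:PushForward}. It therefore suffices to check the identity for one atom: a mass $m$ entering at $(y,s)$, with either $s=0$ or $y=0$, and leaving at time $s+(l_e-y)/w^e$.

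For such an atom the left-hand side is
\[
m\int_s^{s+(l_e-y)/w^e} \bigl(\varphi^e_t + w^e\varphi^e_x\bigr)\bigl(y+(t-s)w^e,t\bigr)\,dt .
\]
The integrand is the total derivative of $\varphi^e$ along the characteristic, so by the fundamental theorem of calculus this equals $m\bigl[\varphi^e(l_e,\cdot)-\varphi^e(y,s)\bigr]$. The endpoint values are then matched to the right-hand side. An initial atom gives the term in $d\nu^e_{t=0}$. An inflow atom gives the term in $d\tilde\nu^e_{x=0}$. A boarding atom gives the term in $db^e$. The exit values combine as $\tilde\nu^e_{x=l_e}+a^e$, since by \eqref{eq:alightingDiscrete} the mass reaching $l_e$ splits into the alighting part and the outflow part.

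Summing over atoms uses Fubini. This is justified because only finitely many atoms occur on $[0,T]$, since $|\mathcal{T}(e)|<\infty$, and because $\varphi^e$ has compact support. Positivity and boundedness of the measure come from the preceding lemma.

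For uniqueness, take two solutions $\nu^e,\hat\nu^e$ with the same data. The nonlinear terms $b^e$ and $a^e$ depend on the solution only through the atoms at $x=0$ and $x=l_e$. I therefore proceed inductively over the ordered finite set of event times in $\mathcal{T}(e)$, shifted by $l_e/w^e$ for alighting. Between consecutive events the sources coincide, so the difference $\mu=\nu^e-\hat\nu^e$ satisfies the homogeneous weak equation with zero data.

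For this homogeneous problem I use a duality argument. Given $\psi\in C_c^1$, I solve the backward transport problem $\varphi_t+w^e\varphi_x=\psi$ with $\varphi=0$ at the final time and at $x=l_e$. The solution is explicit because $w^e$ is constant: it is obtained by integrating $\psi$ along characteristics. Testing $\mu$ with this $\varphi$ gives $\langle\mu,\psi\rangle=0$, hence $\mu=0$. Alternatively I would simply invoke the uniqueness result of Camilli et al.~\cite{Camili2017}, Theorems 4.2--4.3, for zero right-hand side. Once $\mu=0$ on an interval, the atom values at the next event coincide. By \eqref{eq:boarding} and \eqref{eq:alightingDiscrete}, and using $q^e(t^-)$ from \eqref{eq:queue}, this makes $b^e$ and $a^e$ coincide there as well, and the induction proceeds.

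I expect the main obstacle to be regularity in the duality step. The backward solution $\varphi$ is not in $C_0^1$ unless $\psi$ is chosen compatibly at the boundary $x=l_e$ and at corners. In addition, atoms sitting exactly on $x=0$ or $x=l_e$ are seen by the boundary terms. I would handle this with mollified $\psi$ supported in the open interior, approximate up to the boundary, and treat the boundary atoms separately by explicit comparison of their masses.
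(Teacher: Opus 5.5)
Your existence part is the paper's proof. The appendix inserts \eqref{eq:SolutionMeasure} into the weak identity, evaluates the Dirac masses via Fubini--Tonelli, writes $\varphi^e_t+w^e\varphi^e_x$ as the total derivative along $\Phi^e_t$, applies the fundamental theorem of calculus, and identifies the exit values with $\tilde\nu^e_{x=l_e}$ plus $a^e$. Doing this atom by atom is the same computation.

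When you match endpoint terms, use the identity the appendix actually derives: $-\int\int(\varphi^e_t+w^e\varphi^e_x)\,d\nu^e$ on the left, and all entry terms, including $\tilde\nu^e_{x=0}$, with a plus sign. As printed, Definition \ref{def: weakMeasureValuedSolutionOneEdge} has the sign of the left-hand side and of the $\tilde\nu^e_{x=0}$ term inconsistent with your single-atom formula.

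Where you genuinely differ is uniqueness. The paper's proof ends after the verification and contains no uniqueness argument; there is only the earlier pointer to Camilli et al.\ for the homogeneous case. Your duality step is therefore what actually backs the word ``unique'', and its core is correct.

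There are two corrections to that part. First, drop the induction over event times. It is the one step that does not work as written. A space-time measure $\nu^e_t(dx)\otimes dt$ does not determine the atoms $\nu^e(0,t)$, $\nu^e(l_e,t)$ at individual times. Restricting the weak identity to a time slab would also need a time trace that the definition does not provide.

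The induction is also unnecessary on one edge. In \eqref{eq:boarding}, $\nu^e(0,t)$ is the incoming onboard mass $\tilde\nu^e_{x=0}(t)$, as in the positivity lemma. So $q^e$ and $b^e$ are fixed recursively by $X^e$, $\tau^e$ and the given inflow alone. Meanwhile $a^e$ enters only paired with $\varphi^e(l_e,\cdot)$, exactly like $\tilde\nu^e_{x=l_e}$. Hence $\mu=\nu^e-\hat\nu^e$ satisfies $\int\int(\varphi_t+w^e\varphi_x)\,d\mu=0$ for every test function with $\varphi(l_e,\cdot)=0$, and one global duality step gives $\mu=0$. The total exit flux $\tilde\nu^e_{x=l_e}+a^e$ is then recovered from the identity with general test functions and split by \eqref{eq:alightingDiscrete}.

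Second, the regularity obstacle you expect does not arise. Since $w^e$ is constant, take any $\psi\in C^1$ with compact support in $[0,l_e]\times[0,\infty)$ and set
\[
\varphi(x,t)=-\int_0^{(l_e-x)/w^e}\psi(x+w^e s,\,t+s)\,ds .
\]
This $\varphi$ is $C^1$ up to the boundary, vanishes at $x=l_e$ and for large $t$, and satisfies $\varphi_t+w^e\varphi_x=\psi$. Differentiating the upper limit produces a term $\psi(l_e,\cdot)$ that cancels the endpoint term from the fundamental theorem of calculus. It need not vanish at $x=0$ or $t=0$, which is harmless because the data of $\mu$ vanish there.

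So $\langle\mu,\psi\rangle=0$ for all such $\psi$, which also rules out mass of $\mu$ on the boundary lines. No mollification and no separate treatment of boundary atoms is needed. This holds as long as test functions are read as compactly supported in the closed half-strip, which is the only reading under which the boundary terms in the definition mean anything.
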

We provide the proof in the Appendix \ref{app: appendix}.

	\subsection{Construction of solutions on the network}
	\label{sec: solNetwork}
	Based on the solutions found on one network edge, we extend the solution to the entire network. The network problem is given by
	\begin{align}
		\begin{split}
			\label{eq:NetworkProblem}
			\nu_t^e + (w^e\nu^e)_x &= b^e(t)\varepsilon_0(x) - a^e(t)\varepsilon_{l_e}(x), \qquad x \in [0,l_e],~ t \in (0,\infty), ~e\in\mathcal{E},\\
			\nu_{t=0}^e &= \nu_0^e, \qquad x \in [0,l_e],~ e\in\mathcal{E},\\
			\tilde{\nu}_{x=0}^e &= \tilde{\nu}^{(\delta^{v(e)}_t)^{-1}(e)}_{x=l_{(\delta^{v(e)}_t)^{-1}(e)}}, \qquad t \in (0,\infty), ~ e\in \mathcal{E}.%\\
			%  \tilde{\nu}_{x=l_e}^e &= w(l_e)\nu^e(l_e,\cdot)-a^e, \qquad %t\in(0,\infty), ~e\in \mathcal{E}
		\end{split}
	\end{align}
	In case $(\delta^{v(e)}_t)^{-1}(e)$ is the empty set, then the contribution containing the $\delta_t^v$-function is considered to be zero.
	For the condition at the vertices we make use of the particular tram schedule definition and the functions $\delta_t^v$. In Section~\ref{sec: admissibleTramSchedule}, we have already discussed the structure of \(\delta_t^v\), which governs the flow of tram passengers across a vertex. Based on this we provide a detailed description of the vertex conditions for the in- and outflowing measures in the exemplary cases of 1--1, 1--2, 2--1, and, more generally, \( m\text{--}n \) junctions.

\paragraph{1--1 junction.}
Consider a vertex \( v \in \mathcal{V} \) with exactly one incoming edge \( e^- \in v^- \) and one outgoing edge \( e^+ \in v^+ \). 
The boundary condition reads
\begin{align}
	\label{eq:coupling1-1}
	\tilde{\nu}^{e^+}_{x=0}(t) =
	\begin{cases}
		\tilde{\nu}^{e^-}_{x=l_{e^-}}(t) , & \text{if } \delta_t^v(e^-) = e^+, \\[0.3em]
		0, & \text{if } \delta_t^v(e^-) = \emptyset.
	\end{cases}
\end{align}
%\begin{align}
%	\label{eq:coupling1-1}
%	\tilde{\nu}^{e^+}_{x=0^+}(t) =
%	\begin{cases}
%		\tilde{\nu}^{e^-}_{x=l_{e^-}}(t) + b^{e^+}(t) , & \text{if } \delta_t^v(e^-) = e^+, \\[0.3em]
%		b^{e^+}(t), & \text{if } \delta_t^v(e^-) = \emptyset.
%	\end{cases}
%\end{align}
%Here, \(\tilde{\nu}^{e^+}_{x=0^+}(t)\) denotes the right-hand limit towards \(x=0\), i.e., the measure after passengers have boarded, while \(\tilde{\nu}^{e^-}_{x=l_{e^-}}(t)\) denotes the outflowing passengers from $e^-$ after the alighting.
If no tram arrives at time \(t\) that continues to \(e^+\), there is no information transported via the vertex. Nevertheless a new trip may originate in the vertex, which is then reflected by boarding passengers and the measure $b^{e^+}(t)$.

\paragraph{1--2 junction.}
In this case, the vertex \(v\) has one incoming edge \( e^- \in v^- \) and two outgoing edges \( e_1^+, e_2^+ \in v^+ \). 
Using the definition of \(\delta_t^v\), we obtain
\begin{align}
	\label{eq:coupling1-2}
	\tilde{\nu}^{e^+_i}_{x=0}(t) =
	\begin{cases}
		\tilde{\nu}^{e^-}_{x=l_{e^-}}(t)  , & \text{if } (\delta_t^v)^{-1}(e_i^+) = e^-, \\[0.3em]
		0, & \text{if } (\delta_t^v)^{-1}(e_i^+) = \emptyset,
	\end{cases}
	\qquad i = 1,2.
\end{align}
%\begin{align}
%	\label{eq:coupling1-2}
%	\tilde{\nu}^{e^+_i}_{x=0}(t) =
%	\begin{cases}
%		\tilde{\nu}^{e^-}_{x=l_{e}}(t) + b^{e^+_i}(t) , & \text{if } (\delta_t^v)^{-1}(e_i^+) = e^-, \\[0.3em]
%		b^{e^+_i}(t), & \text{if } (\delta_t^v)^{-1}(e_i^+) = \emptyset,
%	\end{cases}
%	\qquad i = 1,2.
%\end{align}
As mentioned previously, the inflow of passengers entering the vertex cannot split into multiple outgoing fluxes. 
Hence, for any fixed \(t\), the first case cannot occur simultaneously for both outgoing edges.

In Figure \ref{fig:toyNetwork} we show an exemplary 1-1-1-1-1 network with 5 tram stops located at $x=0,x=3,x=5,x=9,x=10$. The trams starts at $x=0$ at the times $2,12,22,32,42,52$ (every ten minutes). One can see how the masses are transported in space and time. The passenger loads are described by the color of the lines. Between the tram stops the number of passengers does not change, since boarding and alighting is only possible at the tram stops. Until the tram stop 3 at $x=5$ the trams are rather empty. On the section between tram stop 3 and 4 many passengers travel in the tram, which can be observed by the more yellow or even red colors. Particularly the tram starting at $t=42$ is highly crowded. %The spatial component of the intervals was chosen continuously and not reset to zero at the beginning of each edge. 

\begin{figure}[ht!]
\centering
\includegraphics[width=0.8\linewidth]{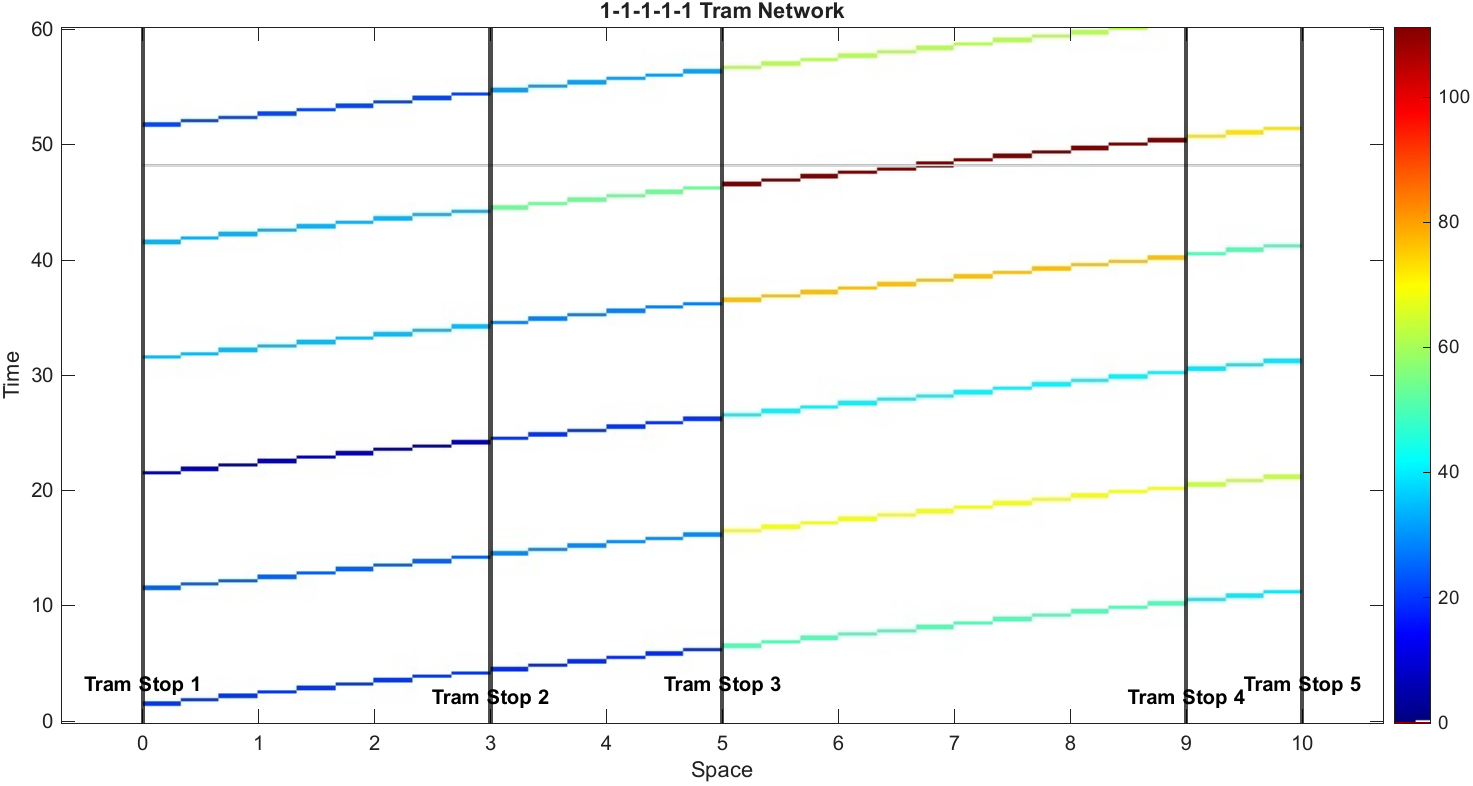}
\caption{Evolution of trams in a small 1-1-1-1-1 tram network. The lines show the movement of the tram in space and time and the color represents the number of traveling passengers. Tram stops are marked with black vertical lines.}
\label{fig:toyNetwork}
\end{figure}

\paragraph{2--1 junction.}
Here, the vertex \(v\) has two incoming edges \( e_1^-, e_2^- \in v^- \) and one outgoing edge \( e^+ \in v^+ \). 
The boundary condition for the outgoing edge is defined as
\begin{align}
	\label{eq:coupling2-1}
	\tilde{\nu}^{e^+}_{x=0}(t) =
	\begin{cases}
		\tilde{\nu}^{e^-_i}_{x=l_{e^-_i}}(t), & \text{if } \delta_t^v(e^-_i) = e^+, \\[0.3em]
		0, & \text{if } \delta_t^v(e^-_1) = \delta_t^v(e^-_2) = \emptyset.
	\end{cases}
\end{align}
%\begin{align}
%	\label{eq:coupling2-1}
%	\tilde{\nu}^{e^+}_{x=0}(t) =
%	\begin{cases}
%		\tilde{\nu}^{e^-_i}_{x=l_{e^-_i}}(t) + b^{e^+}(t) , & \text{if } \delta_t^v(e^-_i) = e^+, \\[0.3em]
%		b^{e^+}(t), & \text{if } \delta_t^v(e^-_1) = \delta_t^v(e^-_2) = \emptyset.
%	\end{cases}
%\end{align}
By construction of the admissible solution, it is not possible that both \(\delta_t^v(e^-_1)\) and \(\delta_t^v(e^-_2)\) are non-empty simultaneously.

\paragraph{m--n junction.}
\label{eq:m-nconditionmeasure}
Finally, the boundary condition at an inner vertex can be generalized for \(m\in \mathbb{N}\) incoming edges \( e^-_1, \dots, e^-_m \) 
and \(n\in \mathbb{N}\) outgoing edges \( e^+_1, \dots, e^+_n \) as
\begin{align}
	\label{eq:coupling2-2}
	\tilde{\nu}^{e^+}_{x=0}(t) =
	\begin{cases}
		\tilde{\nu}^{e^-_i}_{x=l_{e^-_i}}(t) , & \text{if } \delta_t^v(e^-_i) = e^+,~ i = 1, \dots, m, \\[0.3em]
		0, & \text{if } \not\exists\, i \in \{1, \dots, m\} \text{ such that } \delta_t^v(e^-_i) = e^+.
	\end{cases}
\end{align}
By construction, there can be at most one \(i \in \{1, \dots, m\}\) such that \(\delta_t^v(e^-_i) = e^+\) holds.
	
	This allows us to generalize the formulation of a measure-valued solution to the network. Therefore, we consider test functions $\varphi$ on the entire network such that $\varphi \in C_0^1(\mathcal{E} \times [0,\infty))$ and we denote by $\varphi^e$ the restriction of $\varphi$ on edge $e$. Then the weak formulation by summing over all edges in the network is given by 
	\begin{align*}
		\sum_{e\in\mathcal{E}}&\int_0^\infty\int_0^{l_e} \left( \varphi^e_t(x,t) +  w^e\varphi^e_x(x,t) \right) d\nu^e(x,t) \\
		&~~+ \sum_{e\in\mathcal{E}}\int_0^{l_e} \varphi^e(x,0)d\nu^e_0(x)\\
		&~~ + \sum_{e\in\mathcal{E}} \int_0^\infty \varphi^e(0,t)d\tilde{\nu}_{x=0}^e(t) - \int_0^\infty \varphi^e(l_e,t)d\tilde{\nu}_{x=l_e}^e(t)  \\
	%	& ~~+ \sum_{e\in\mathcal{E}} \int_0^\infty b^e(t,\nu^e(0,t))\phi^e(0,t) \phantom{- a^e(t,\nu^e(l_e,t))\phi^e(l_e,t)}dt= 0.\\
		& ~~+ \sum_{e\in\mathcal{E}} \int_0^\infty \varphi^e(0,t)db^e(t)= 0.
	\end{align*}
	The second last line can be further evaluated. We ensure flux conservation at the junction after alighting and before boarding of passengers. 
	Therefore by using continuity of the test functions at the inner vertices $v \in \mathcal{V}$ with ingoing edges $e^- \in \mathcal{E}$ and outgoing edges $e^+ \in \mathcal{E}$ we get for all $t\geq 0$
	\begin{align*}
		\varphi^{e^-}(l_e,t) = \varphi^{e^+}(0,t).
	\end{align*}
	This implies that the in- and outflowing conditions at the vertices cancel out 
	\begin{align*}
		&\sum_{e\in\mathcal{E}} \int_0^\infty \varphi^e(0,t)d\tilde{\nu}_0^e(t) - \int_0^\infty \varphi^e(l_e,t)d\tilde{\nu}_{l_e}^e(t)   =0.
	\end{align*}
	%as all passengers alight the tram at the terminal stop. {\color{red} Aktuell noch ein Problem mit Bahnen die mittendrin enden!}
	At all inner vertices apart from the boarding and alighting dynamics, we conserve the transported passengers. Thus the only remaining quantities are the boarding passengers (associated with the beginning of every edge) and the alighting passengers (associated with the end of every edge). For the source vertices we assume that there is no external inflow into the tram system, which corresponds to trams which are empty before the first passenger boards. %The switch of alighting passengers to the end of the edges is possible due to the continuity of the test functions on the graph. This way of considering avoids additional term for the alighting of passengers at the terminal stop, if alighting would also be associated to the beginning of an edge and there is no successor edge for a terminating tram.
	
	This leads to the definition of the measure-valued solution for the network problem.
	\begin{definition}
		\label{def: measureSolutionNetwork}
		A family of finite Borel measure $\nu = (\nu^e)_{e \in \mathcal{E}}$ such that any $\nu^e \in \mathcal{M}([0,l_e]\times[0,\infty))$ is called a measure-valued solution of \eqref{eq:NetworkProblem}, if for all test functions $\varphi^e \in C^1(\mathcal{E}\times[0,\infty))$
		\begin{align}
			\begin{split}
				\label{eq:WeakFormulation2}
				&\int_0^\infty\int_\mathcal{E} \left( \varphi_t(x,t) + w^e\varphi_x(x,t) \right) d\nu(x,t) + \int_\mathcal{E} \varphi(x,0)d\nu_0(x)\\
				&~~~ + \sum_{e \in \mathcal{E}} \left(\int_0^\infty \varphi^e(0,t)db^e(t, \nu^e(0,t)) - \int_0^\infty \varphi^e(l_e,t)da^e(t, \nu^e(l_e,t))\right)  = 0.%
			\end{split}
		\end{align}
	\end{definition}
	
	For the construction of the solution on the network we can use the solutions for one edge presented in Equation \eqref{eq:SolutionMeasure}. This is, we assume that for any $e\in \mathcal{E}$ the solution measures $\nu^e$ on the edges fulfill Definition \ref{def: weakMeasureValuedSolutionOneEdge}. At the vertices we ensure the the in- and outflow measures are defined as in Equations \eqref{eq:coupling1-1}--\eqref{eq:coupling2-2}. 
	Following the arguments in the work of Camili et.al.\ in Theorem 5.1 \cite{Camili2017} %the solution on the edges from equation \eqref{eq: SolutionEdge} together with the coupling conditions in equation \eqref{eq:m-nconditionmeasure} is the unique measure-valued solution of \eqref{eq:NetworkProblem}.
	allows us to state the following existence and uniqueness theorem.
		\begin{theorem}
		\label{thm:UniqueSolutionNetwork}
		Given a family of functions modeling the boarding and alighting passengers $(b^e(t))_{e \in \mathcal{E}}, (a^e(t))_{e \in \mathcal{E}} \in \mathcal{M}([0,\infty))$ and an initial state $(\nu^e_{t=0})_{e \in \mathcal{E}} \in \mathcal{M}(\mathcal{E})$. Furthermore we assume that the coupling conditions at the vertices are given by \eqref{eq:coupling1-1}--\eqref{eq:coupling2-2}. 
		The family of positive Borel measure described by Equation \eqref{eq:SolutionMeasure} is the unique measure-valued solution to the tram network problem \eqref{eq:NetworkProblem} according to Definition \ref{def: measureSolutionNetwork}.
	\end{theorem}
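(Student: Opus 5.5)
The plan is to reduce the network statement to Theorem~\ref{thm:UniqueSolutionOneEdge}, edge by edge, propagating along the finitely many tram departures in time order. This follows the strategy of Theorem~5.1 in \cite{Camili2017}.

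\textbf{Existence.} Fix a horizon $T>0$. By Definition~\ref{def: admissibleSchedule} and the finiteness of each $\mathcal{T}(e)$, only finitely many event times occur in $[0,T]$. These are the departures $t_i\in\mathcal{T}(e)$ and the arrivals $t_i+l_e/w^e$. Order them as $0<s_1<s_2<\dots<s_N\le T$. Since $w^e>0$ and $l_e>0$, every travel time satisfies $l_e/w^e\ge \kappa:=\min_e l_e/w^e>0$. The inflow $\tilde{\nu}^e_{x=0}$ on $[0,t]$, prescribed via \eqref{eq:coupling1-1}--\eqref{eq:coupling2-2}, therefore depends only on outflows $\tilde{\nu}^{e'}_{x=l_{e'}}$ on $[0,t]$. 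These outflows are determined by the edge solutions up to time $t$, which in turn use inflow data only up to $t-\kappa$.

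I would proceed by induction over time slabs $[k\kappa,(k+1)\kappa]$:
\begin{itemize}
\item On each slab, the inflow data are already known from the previous slab.
\item Formula \eqref{eq:SolutionMeasure} then gives $\nu^e$ on that slab, and Theorem~\ref{thm:UniqueSolutionOneEdge} shows it solves the single-edge weak formulation there.
\item The boarding masses $b^e$ depend on $\nu^e(0,t)$ through \eqref{eq:boarding}. By Lemma~\ref{lem:queue_pos} they are well defined and nonnegative once $\nu^e(0,t)=\tilde{\nu}^e_{x=0}(t)$ is known.
\item The alighting masses $a^e$ via \eqref{eq:alightingDiscrete} use only $\nu^e(l_e,t)$.
\end{itemize}
So each step is explicit, and after finitely many steps we reach $T$.

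\textbf{Network weak formulation.} Next I would check that the family satisfies Definition~\ref{def: measureSolutionNetwork}. Take a network test function $\varphi$ and apply Definition~\ref{def: weakMeasureValuedSolutionOneEdge} to each restriction $\varphi^e$. Summing over $e$ leaves the boundary terms
\[\sum_e\Bigl(\int\varphi^e(0,t)\,d\tilde{\nu}^e_{x=0}-\int\varphi^e(l_e,t)\,d\tilde{\nu}^e_{x=l_e}\Bigr).\]
These boundary terms cancel for the following reasons:
\begin{itemize}
\item Each atom of $\tilde{\nu}^{e^+}_{x=0}$ at time $t$ equals exactly one atom of $\tilde{\nu}^{e^-}_{x=l_{e^-}}$ with $\delta^v_t(e^-)=e^+$. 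This pairing is a bijection on nonzero atoms by injectivity except for the empty set (Definition~\ref{def: injectiveEmptySet}).
\item Continuity of $\varphi$ at vertices gives $\varphi^{e^-}(l_{e^-},t)=\varphi^{e^+}(0,t)$.
\item Source vertices carry zero external inflow.
\item At terminal stops $r_a^e=1$, so the outflow is zero after alighting.
\end{itemize}
This last point is the delicate one. It is exactly what makes unmatched outflows vanish, so I would verify it carefully using the $\emptyset$ case of $\delta_t^v$.

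\textbf{Uniqueness.} Let $\mu$ be a second solution with the same data and coupling. Restrict the network identity to test functions supported in the interior of a single edge, and then to functions localized near one endpoint. This recovers the single-edge formulation for $\mu^e$ with some in- and outflow traces. I expect this localization, extracting traces from a merely measure-valued solution, to be the main obstacle. I would handle it as in Theorem~\ref{thm:UniqueSolutionOneEdge}, using the atomic structure: all mass lies on finitely many characteristic lines $x=w^e(t-t_i)$, so traces are simply the masses on those lines.

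The traces satisfy the coupling, so on the first slab the inflows agree with those of $\nu$. Theorem~\ref{thm:UniqueSolutionOneEdge} then yields $\mu^e=\nu^e$ there, and consequently the outflows, $b^e$ and $a^e$ coincide as well. Induction over the slabs gives $\mu=\nu$ on $[0,T]$ for every $T$. Positivity follows from the preceding lemma applied edgewise.
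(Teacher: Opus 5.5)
The genuine gap is in your uniqueness step.

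\textbf{Uniqueness.} Your existence construction (slab induction with $\kappa=\min_e l_e/w^e$) and the vertex cancellation spell out what the paper only sketches. Its appendix proof merely verifies the single-edge identity for \eqref{eq:SolutionMeasure}: Fubini--Tonelli, $\varphi^e_t+w^e\varphi^e_x$ rewritten along $\Phi^e_t$ as $\frac{d}{dt}\varphi^e(\Phi^e_t,t)$, the fundamental theorem of calculus, and identification of the exit terms with $\tilde\nu^e_{x=l_e}$. The cancellation is argued informally before Definition~\ref{def: measureSolutionNetwork}, and uniqueness is left to \cite{Camili2017}.

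The problem with your step is the localization. Test functions in Definition~\ref{def: measureSolutionNetwork} are continuous at vertices, so at an inner vertex you cannot localize at the endpoint of a single edge. Every admissible $\varphi$ takes one value $\varphi(v,t)$ on all incident edges. The identity therefore yields only the balance $\sum_{e\in v^+}\tilde\mu^e_{x=0}=\sum_{e\in v^-}\tilde\mu^e_{x=l_e}$. As soon as $|v^+|\ge 2$, this determines neither the individual traces nor the routing prescribed by $\delta^v_t$.

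This is not a technicality. Suppose a scheduled tram leaves $v$ on one of two branches that rejoin at $v'$ with equal travel times and no passenger exchange on them. Moving its atom to the other branch gives a second family satisfying Definition~\ref{def: measureSolutionNetwork} with the same $b^e$, $a^e$, $\nu_0$. So \eqref{eq:coupling1-1}--\eqref{eq:coupling2-2} must be imposed on the competitor as an extra condition, on traces defined intrinsically rather than extracted from the identity. Nor may you assume that a competitor's mass lies on finitely many characteristic lines; that is part of what must be shown.

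What works is the following:
\begin{itemize}
\item Test only with functions supported in the open strip of each edge; this gives $\mu^e_t+w^e\mu^e_x=0$ there.
\item Since $w^e$ is constant, $\mu^e$ is invariant along $x-w^et$. Its inflow and outflow traces are then the masses carried by characteristics entering at $x=0$ and leaving at $x=l_e$, and $\mu^e$ satisfies Definition~\ref{def: weakMeasureValuedSolutionOneEdge} with them.
\item With the coupling imposed on these traces, your slab induction with Theorem~\ref{thm:UniqueSolutionOneEdge} goes through as you describe.
\end{itemize}

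\textbf{Cancellation.} A second, smaller gap: $r^e_a=1$ at terminal stops does not make all unmatched outflows vanish. For $t\ge l_e/w^e$, an outflow with $\tau^e(t-l_e/w^e)=0$ is indeed zero by admissibility. Initial mass at $y\in(0,l_e]$, however, reaches $x=l_e$ at $t=(l_e-y)/w^e<l_e/w^e$. There $\tau^e(t-l_e/w^e)=0$ by the convention for negative times, so $\delta^v_t(e)=\emptyset$ and the indicator in \eqref{eq:alightingDiscrete} vanishes. Such an atom is neither routed nor alighted, and $-\int\varphi^e(l_e,t)\,d\tilde\nu^e_{x=l_e}$ survives the summation. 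You need either $\nu^e_0=0$, as in the paper's positivity lemma, or initial mass sitting on characteristics of trams whose schedule is extended to negative times.
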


	\begin{remark}
		In comparison to the work in \cite{Camili2017} the conditions at the vertex are different to our case. Instead of general distribution parameters, where mass can be split at the vertex, for us it is only possible to move the entire mass to exactly one outgoing edge (taking into account potential boarding and alighting of passengers). The basis for this is given by the functions $\delta_t^v$ in Definition \ref{def: admissibleSchedule}. On the other hand, in our work it is possible that mass appears or disappears by boarding and alighting passengers and we do not have external in- or outflows to the network apart from the passengers boarding or alighting. This passenger dynamics occurs at every vertex. Therefore the single boundary condition in \cite{Camili2017} turns into a summation over all edges in our work.
		Despite those changes, the work in \cite{Camili2017} provides the foundation for the formulation of unique solutions to the tram network problem.
	\end{remark}

	\section{Numerical experiments}	
	\label{sec:numerik}
	We numerically solve the tram network using a finite difference method to describe the evolution of the passenger mass. For any edge $e \in \mathcal{E}$ we discretize the space interval $[0,l_e]$ using a stepsize $\Delta x^e>0$ such that $x_i^e = (i-1)\Delta x^e, ~i=1,\dots, K$. Similarly for the time component we introduce $\Delta t^e>0$ with $t_j^e = (j-1)\Delta t^e,~ j=1,\dots,J$. The initial data of the tram network problem in \eqref{eq:NetworkProblem} is discretized by 
	\begin{align*}
		\nu_i^{0,e} = \frac{1}{\Delta x^e}\int_{x^e_{i-\frac{1}{2}}}^{x^e_{i+\frac{1}{2}}} \nu_0^e(y)dy,
	\end{align*}
	where $x^e_{i+\frac{1}{2}} = (i-\frac{1}{2})\Delta x^e$.
	Boundary data is described by
	\begin{align*}
		\nu_0^{j,e} = \frac{1}{w^e} \int_{t_j}^{t_{j+1}} \tilde{\nu}^e_{x=0}(s)ds, \qquad \tilde{\nu}_{x=l_e}^e(t) = \begin{cases}
			w^e \nu_K^{j,e}, & ~t \in \lbrace{t_1,\dots,t_J}\rbrace\\
			0, & ~\text{else}.
		\end{cases}
	\end{align*}
	As in our problem information  exclusively travels from the left to the right, we apply a left-sided Upwind scheme
	\begin{align*}
		\nu_i^{j+1,e} = \nu_i^{j,e} - \frac{\Delta t^e}{\Delta x^e} w^e (\nu_i^{j,e} - \nu_{i-1}^{j,e}).
	\end{align*}
	For stability of the numerical method we require the CFL-condition \cite{Courant.1928} $\frac{\Delta t^e}{\Delta x^e} w^e \leq 1$. At the junctions we use the coupling conditions specified in the Equations \eqref{eq:coupling1-1}--\eqref{eq:coupling2-2}.
	
	For the passenger arrival we simulate the Poisson process via a thinning procedure \cite{Ogata.1981} that takes care of the time varying arrival rates, see Algorithm \ref{algo:thinning}. 
	
	\begin{algorithm}
		\caption{Simulation of an inhomogeneous Poisson process for the passenger arrivals via thinning}
		\begin{algorithmic}[1]
			
			\STATE \textbf{Input:} End time $T$, time-dependent rate function $\lambda(t)$, time points $t\_vec$ to evaluate $X(t)$
			\STATE \textbf{Output:} Arrival times $arrival\_times$, Poisson process $X(t\_vec)$
			
			\STATE Compute $\lambda_{\max} = \max_{t \in [0,T]} \lambda(t)$
			\STATE Initialize $t \gets 0$
			\STATE Initialize $arrival\_times \gets [\ ]$
			
			\WHILE{$t < T$}
			\STATE Draw $\Delta t \sim \text{Exponential}(\lambda_{\max})$
			\STATE $t \gets t + \Delta t$
			\IF{$t > T$}
			\STATE \textbf{break}
			\ENDIF
			\STATE Draw $u \sim \text{Uniform}(0,1)$
			\IF{$u \le \lambda(t)/\lambda_{\max}$}
			\STATE Append $t$ to $arrival\_times$
			\ENDIF
			\ENDWHILE
			
			\STATE \textbf{Construct Poisson process $X(t\_vec)$:}
			\FOR{each time point $t_i$ in $t\_vec$}
			\STATE $X(t_i) \gets$ number of arrivals in $arrival\_times$ that are $\le t_i$
			\ENDFOR
			
		\end{algorithmic}
		\caption{Thinning algorithm for the arrivals of passengers.}
		\label{algo:thinning}
	\end{algorithm}
	
	\paragraph{The data of the study}
	The following subsections are dedicated to simulate a real tram network and show its resilience under different restrictions. In the first part of the numerical study, we conducted simulation experiments on tram line 1 in Mannheim, which runs from north-west to the south-east and crosses the city center. Line 1 is considered to be the most important tram line in Mannheim and is the only one running throughout the night. A brief visualization of the northern part of the route of line 1 is shown in Figure \ref{fig:line1Schematic}. As we consider only one line, the entire network consists of 1-1 junctions.
	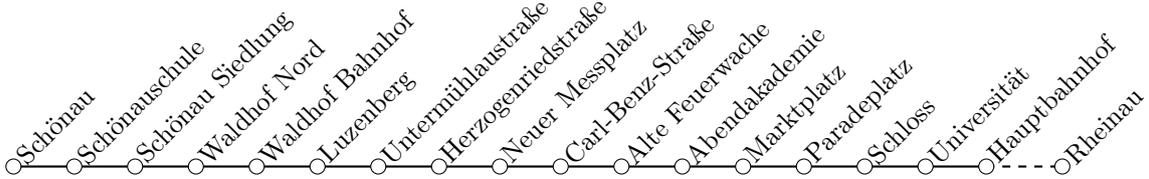
\begin{figure}[ht!]
		\centering
		\begin{tikzpicture}[
			station/.style={circle, draw, fill=white, inner sep=2pt},
			every node/.style={font=\small}
			]
			
			% Linie
			\draw[thick] (0,0) -- (12.8,0);
			\draw[dashed, thick] (12.8,0) -- (13.8,0);
			
			% Stationen
			\node[station] at (0,0) {};
			\node[below, rotate=45, anchor=west] at (0,0) {Schönau};
			
			\node[station] at (0.8,0) {};
			\node[below, rotate=45, anchor=west] at (0.8,0) {Schönauschule};
			
			\node[station] at (1.6,0) {};
			\node[below, rotate=45, anchor=west] at (1.6,0) {Schönau Siedlung};
			
			\node[station] at (2.4,0) {};
			\node[below, rotate=45, anchor=west] at (2.4,0) {Waldhof Nord};
			
			\node[station] at (3.2,0) {};
			\node[below, rotate=45, anchor=west] at (3.2,0) {Waldhof Bahnhof};
			
			\node[station] at (4.0,0) {};
			\node[below, rotate=45, anchor=west] at (4.0,0) {Luzenberg};
			
			\node[station] at (4.8,0) {};
			\node[below, rotate=45, anchor=west] at (4.8,0) {Untermühlaustraße};
			
			\node[station] at (5.6,0) {};
			\node[below, rotate=45, anchor=west] at (5.6,0) {Herzogenriedstraße};
			
			\node[station] at (6.4,0) {};
			\node[below, rotate=45, anchor=west] at (6.4,0) {Neuer Messplatz};
			
			\node[station] at (7.2,0) {};
			\node[below, rotate=45, anchor=west] at (7.2,0) {Carl-Benz-Straße};
			
			\node[station] at (8.0,0) {};
			\node[below, rotate=45, anchor=west] at (8.0,0) {Alte Feuerwache};
			
			\node[station] at (8.8,0) {};
			\node[below, rotate=45, anchor=west] at (8.8,0) {Abendakademie};
			
			\node[station] at (9.6,0) {};
			\node[below, rotate=45, anchor=west] at (9.6,0) {Marktplatz};
			
			\node[station] at (10.4,0) {};
			\node[below, rotate=45, anchor=west] at (10.4,0) {Paradeplatz};
			
			\node[station] at (11.2,0) {};
			\node[below, rotate=45, anchor=west] at (11.2,0) {Schloss};
			
			\node[station] at (12.0,0) {};
			\node[below, rotate=45, anchor=west] at (12.0,0) {Universität};
			
			\node[station] at (12.8,0) {};
			\node[below, rotate=45, anchor=west] at (12.8,-0.0) {{Hauptbahnhof}};
			
			\node[station] at (13.8,0) {};
			\node[below, rotate=45, anchor=west] at (13.8,-0.0) {{Rheinau}};
			
		\end{tikzpicture}
		\caption{Line 1 from Sch\"onau to Rheinau, with all stops up to the central station.}
		\label{fig:line1Schematic}
	\end{figure}
	
	The simulations represent a typical weekday (Monday to Friday). The tram passenger dynamics are modeled as before, stopping at each station, where passengers board and alight according to a Poisson processes. Boarding is limited by the tram’s seating and standing capacities, which vary hourly to reflect realistic occupancy patterns. We assume hourly arrival rates of passengers for each station and alighting shares also in hourly intervals for each station. 
	The data used is real data provided by the local tram company and is taken from 4 two week time intervals in the years from 2022 to 2024. Also the tram capacities are chosen as the real data depicts. 
	
	Table \ref{tab:DataOverview} shows an excerpt of the data that was used for our study and shows the boarding intensity (passengers per hour), alighting share (percentage of passenger in the tram alighting) and average seating and standing capacities of the tram (in number of passengers) in the hourly interval from 4 to 5~p.m. The two most important stations are Paradeplatz (the most popular square in the inner city) and the central station (Hauptbahnhof). Others like the university (Universität) or castle stop (Schloss) show lower numbers of boarding and alighting passengers due to their less central location. Also the tram capacity varies slightly as different types of trams with different space for passengers are used on line 1. 
	
	\begin{table}[h!]
		\centering
		\begin{tabular}{lcccc}
			\hline
			\textbf{Tram stop} & \makecell{\textbf{Boarding}\\\textbf{intensity}} & \makecell{\textbf{Seating}\\\textbf{capacity}} & \makecell{\textbf{Standing}\\\textbf{capacity}} & \makecell{\textbf{Alighting}\\\textbf{percentage}} \\
			\hline
		%	Alte Feuerwache (15h) & 127 & 117 & 138 & 12 \\
			Alte Feuerwache  & 109 & 113 & 135 & 12 \\
		%	Abendakademie (15h)   & 117 & 117 & 138 & 25 \\
			Abendakademie    & 112 & 113 & 135 & 24 \\
		%	Marktplatz (15h)      & 111 & 117 & 138 & 19 \\
			Marktplatz       & 119 & 113 & 135 & 20 \\
		%	Paradeplatz (15h)     & 157 & 117 & 138 & 27 \\
			Paradeplatz      & 142 & 114 & 136 & 26 \\
		%	Schloss (15h)         & 74  & 117 & 138 & 8  \\
			Schloss          & 66  & 113 & 135 & 7  \\
		%	Universität (15h)     & 39  & 112 & 136 & 2  \\
			Universität      & 41  & 115 & 136 & 2  \\
		%	Hauptbahnhof (15h)    & 167 & 112 & 136 & 36 \\
			Hauptbahnhof     & 169 & 115 & 136 & 36 \\
			\hline
		\end{tabular}
		\caption{Data of some tram stops on line 1 (rounded on integers) for the time from 4 to 5~p.m..}
		\label{tab:DataOverview}
	\end{table}
	
	\paragraph{Performance measures of the tram network}
	In order to be able to compare the performance of different scenarios in the tram network, we introduce two measures that quantify the performance. 
	One quantity of interest that we consider are waiting times at the tram stops and observe the consequences on passenger for changed tram service frequencies. The total waiting time in the tram system is defined by
	\begin{align*}
		\sum_{e \in \mathcal{E}} \int_0^T q^e(t)dt.
	\end{align*}
%	The total waiting time is influenced by the frequency of the tram service and also the circumstand whether a tram is already full and not all passengers are able to board the next tram.
	As a second quantity of interest measuring a certain service level of passenger we consider how much time is spent by passengers in trams without an available seat. Generally trams have a seat capacity, given by $\tau_\text{seat}^e$ and a second capacity of passengers standing in the tram. Together they make up the entire tram capacity $\tau^e$. We assume that passengers take a free seat if there is one available. If there is no free seat in the tram they have to stand. The total standing time is defined by
	\begin{align*}
		\int_0^T	\sum_{e \in \mathcal{E}} \int_0^{l_e} \max\left\lbrace0,\nu^e(x,t) - \tau_\text{seat}^e\left(t-\frac{l_e}{w^e}\right)\right\rbrace dx dt.
	\end{align*}
Both performance measure can also be considered just on a subset of the entire tram stops, which replaces the sum over all edges to a sum just over the relevant stations. Those and similar performance measures have been considered in literature, e.g.\ in \cite{NARAYAN2017}.

To account for uncertainty from the passenger arrival process and, later, from tram service disruptions, we perform a Monte Carlo simulation with 1000 runs throughout the numerical analysis to estimate the average total waiting and standing times.

	\subsection{Different tram frequencies}
	\label{sec:numerik_Frequencies}
	The primary objective of the first experiment is to analyze passenger flow dynamics along line 1, for varying the tram service frequency. The current state of the art is a 10-minute interval between 5~and 9~p.m. During off-peak hours in the evening, trams run at 20-minute intervals. Throughout the rest of the night the trams are scheduled hourly.
	We investigate alternative service intervals during the main period (5~a.m.\ to 9~p.m.), considering 5-, 20-, 30-, and 40-minute intervals. In the other time periods the schedule remains unchanged.  %We will observe the second aspect in a more than linear increase of the waiting time if the tram frequency is low.

%	\begin{align*}
%		\int_0^T	\sum_{e \in \mathcal{E}} \int_0^{l_e} \nu^e(x,t) - \tau^e(t-\frac{(l_e)^2}{\int_0^{x}w^e(y)dy}) dx dt.
%	\end{align*}
	
%	In Figure \ref{fig:linie1_waiting_station}, passenger waiting times are analyzed for different tram service frequencies from two perspectives: spatial, aggregated by station, and temporal, aggregated by hourly intervals.
	
	The spatial analysis (left part of Figure \ref{fig:linie1_waiting_station}) highlights stations with higher passenger demand, as indicated by longer average waiting times. This effect is particularly pronounced at stations such as Sch\"onau, Waldhof Bahnhof, Hauptbahnhof, and Tattersall, where a greater number of passengers arrive and may need to wait for subsequent trams. Across the five scenarios, the highest service frequency (one tram every five minutes) consistently yields the shortest waiting times. Waiting times increase moderately for 10-, 20-, and 30-minute intervals, and escalate sharply under the 40-minute interval scenario. 
	
	This nonlinear increase can be attributed to insufficient tram capacity at low service frequencies, resulting in a substantial proportion of passengers waiting for a second or even third tram. In the present example, this phenomenon is most evident when trams enter and exit the city center (around Alte Feuerwache and after Tattersall), which are the sections of line 1 experiencing the highest passenger loads.
	\begin{figure}[ht!]
	\begin{minipage}{0.49\textwidth}
	\centering
	\includegraphics[width=1\linewidth]{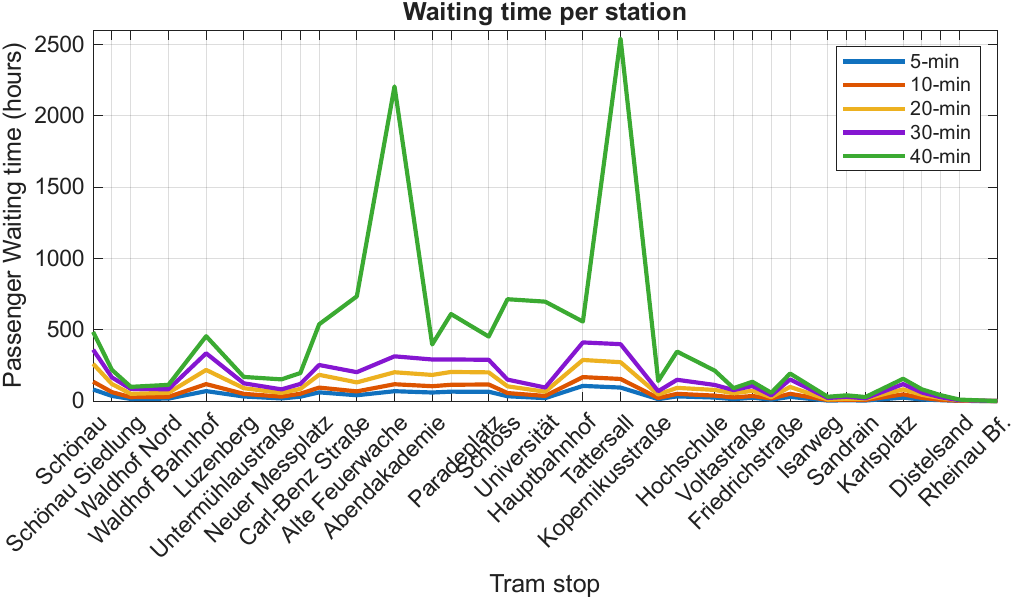}
	\end{minipage} ~\begin{minipage}{0.49\textwidth}
	\centering
	\centering
	\includegraphics[width=1\linewidth]{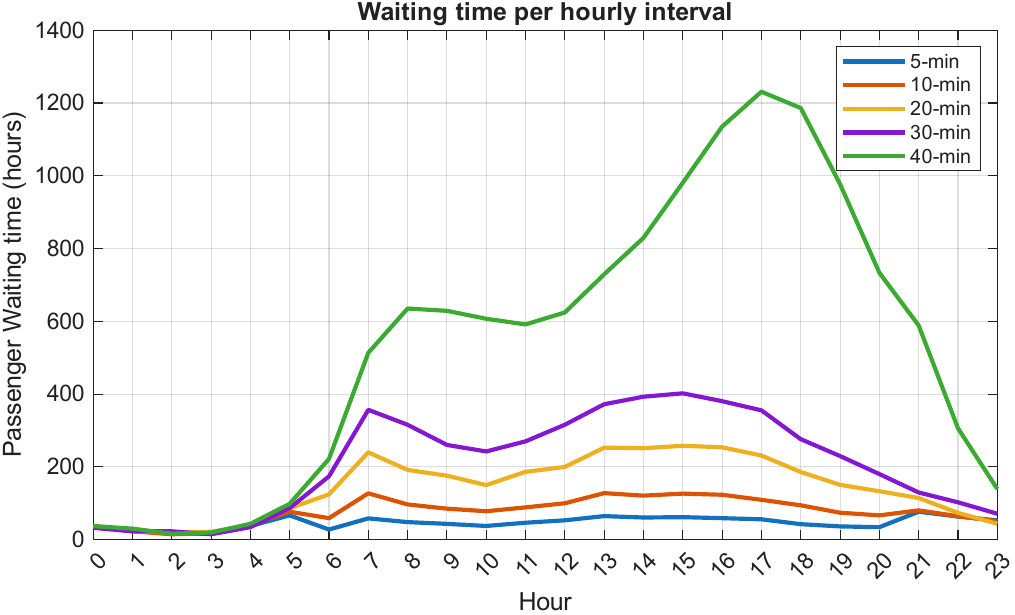}
	%\captionof{figure}{The passenger waiting times (in hours) per hourly interval over all tram station of line 1 for different tram frequencies.}
	%\label{fig:linie1_waiting_hour}
	\end{minipage}
	\caption{The passenger waiting times (in hours) per tram station (left) and hourly interval (right) for different tram frequencies.}
	\label{fig:linie1_waiting_station}
\end{figure}

Considering the temporal component of waiting times in the right part of Figure \ref{fig:linie1_waiting_station}, we observe a similar pattern: low waiting times for high service frequencies and higher waiting times for low frequencies. In the early morning and late evening, when tram services are the same across all scenarios, waiting times are identical. Peak periods can be observed in the morning and afternoon. For the 30-minute frequency, and especially the 40-minute frequency, waiting times extend beyond the standard rush hours, particularly in the evening, as tram capacity is insufficient to transport all passengers to their destinations.

A final overview of total waiting times across the tram network, depending on the chosen service frequency, is presented in Figure \ref{fig:linie1_waiting_total}. The figure clearly shows the drastic and nonlinear increase in waiting times for the 40-minute frequency.

	\begin{figure}[ht!]
		\centering
		\includegraphics[width=0.8\linewidth]{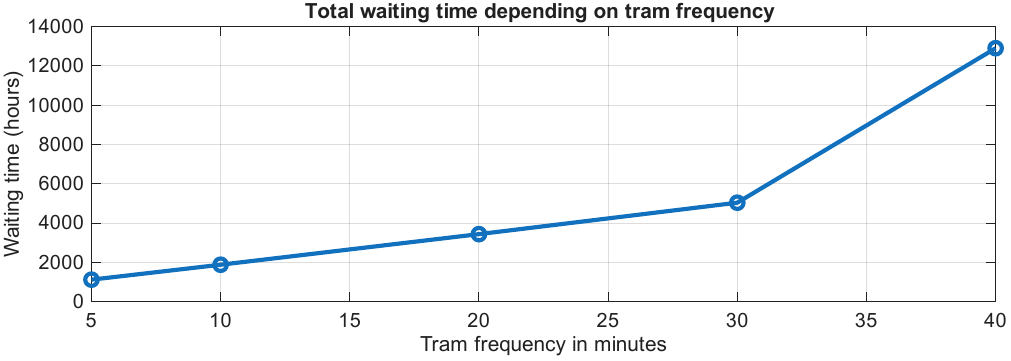}
		\caption{The total passenger waiting time (in hours) for different tram frequencies.}
		\label{fig:linie1_waiting_total}
	\end{figure}

	Next, we focus on the second quantity of interest: the aggregated time passengers have to remain standing in the trams, as shown in Figures \ref{fig:linie1_standing_station}--\ref{fig:linie1_standing_total}. 
	
	The first analysis considers standing passengers from a spatial perspective (left part of Figure \ref{fig:linie1_standing_station}). For the 5- and 10-minute service frequency, trams provide sufficient capacity for each passenger to have a seat. At a 20-minute frequency, however, passengers in the city center and the stations immediately before and after may be unable to find a seat. The number of standing passengers increases further at lower frequencies, despite there being fewer trams available in which passengers could stand.
	\begin{figure}
	\begin{minipage}{0.49\textwidth}
		\centering
		\includegraphics[width=1\linewidth]{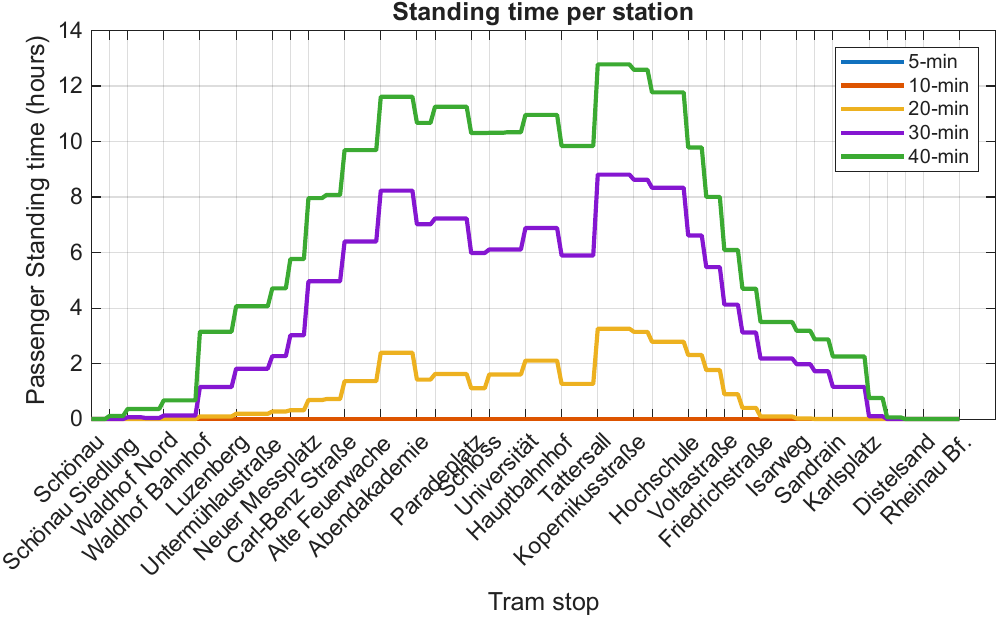}
	\end{minipage} ~\begin{minipage}{0.49\textwidth}
		\includegraphics[width=1\linewidth]{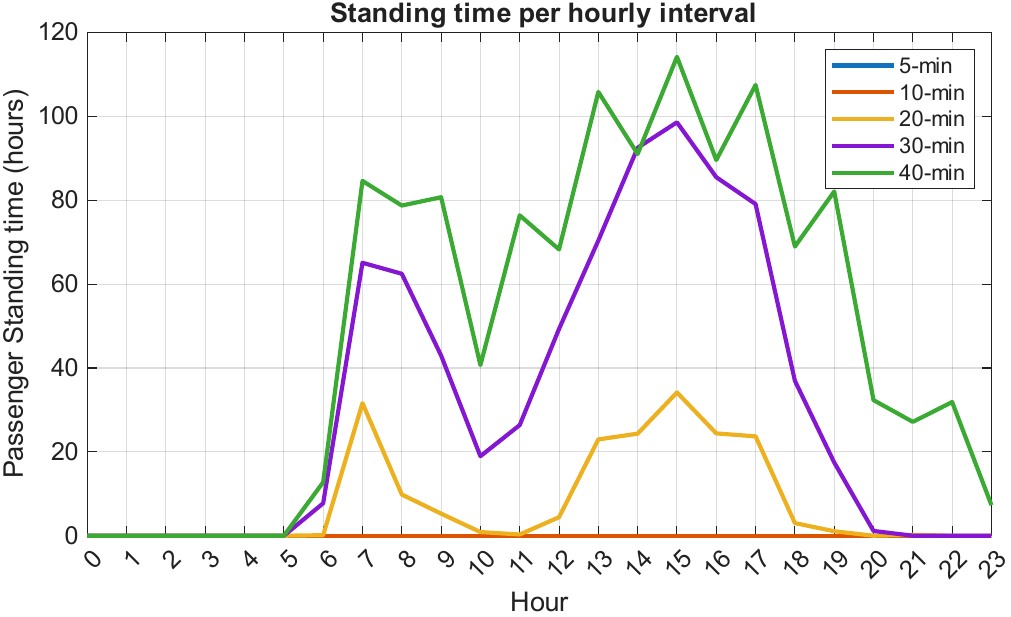}
	%	\captionof{figure}{The passenger standing times (in hours) per hourly interval over the entire route of line 1 for different tram frequencies.}
	%	\label{fig:linie1_standing_hour}
	\end{minipage}
	\caption{The passenger standing times (in hours) for all stations (left) and for hourly intervals (right) for different tram frequencies.}
	\label{fig:linie1_standing_station}
\end{figure}
	
	From a temporal perspective (right part of Figure \ref{fig:linie1_standing_station}), rush hours are again clearly visible in the morning and the afternoon. Interestingly, during the main rush hours, the standing times for the 30- and 40-minute frequencies are quite similar. This can be explained by the physical limit of standing space within the trams, which constrains the number of passengers that can board when service frequency is low. Consequently, more passengers are unable to board immediately and must wait at the tram stop and therefore rather increase the waiting times. This effect is also evident in Figure \ref{fig:linie1_standing_total}, where the increase in total standing time for the 40-minute frequency is relatively moderate and not that steeply increasing as it was for the waiting time.
	
	\begin{figure}[ht!]
		\centering
		\includegraphics[width=0.8\linewidth]{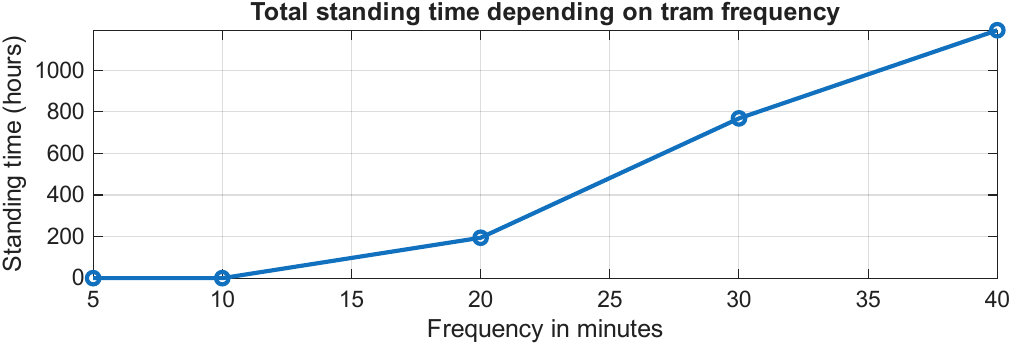}
		\caption{The total passenger standing times (in hours) for different tram frequencies.}
		\label{fig:linie1_standing_total}
	\end{figure}

	\subsection{Delays in the passenger exchange}
	\label{sec:numerik_Delays}
In this subsection, we consider an additional factor that has been neglected so far but can influence the tram network. We extend our standard model by allowing trams to stop at stations and continue their journey after a dwell time $\pi^e$, which may depend on the time $t$ as well as the number of boarding and alighting passengers. This requires to adjust the admissibility condition of the tram schedule in Definition \ref{def: admissibleSchedule}.
For this numerical experiment, the family of functions $\tau = (\tau^e){e \in \mathcal{E}}$, with $\tau^e: [0,\infty) \rightarrow \mathbb{R}{\geq 0}$ representing the admissible tram schedule, is modified in aspect b) to:
\begin{itemize}
%			\item[a)] as before%For any $e \in \mathcal{E}$, $\tau^e(t)<\infty$ for all $t \in [0,\infty)$
	\item[b)] For any $t\in[0,\infty)$ and $v\in\mathcal{V}\backslash\mathcal{V}^\text{terminal}$, there exists a time parameterized \textit{injective except for the empty set} map $\delta_t^v: v^- \rightarrow \lbrace v^+ \bigcup \emptyset\rbrace$ such that for any $e \in v^-$ with $\tau^e\left(t-\frac{l_e}{w^e}- \pi^e(t, b^e(t),a^e(t))\right) >0$ the function $\delta_t^v$ maps to exactly one of the outgoing edges from the set $v^+$. Furthermore it holds 
	\begin{align*}
		\tau^{\delta_t^v(e)}(t) = \tau^e\left(t-\frac{l_e}{w^e}- \pi^e(t, b^e(t),a^e(t))\right).
	\end{align*}
	For any $e \in v^-$ with $\tau^e\left(t-\frac{l_e}{w^e}- \pi^e(t, b^e(t),a^e(t))\right)=0$, we set $\delta_t^v(e) = \emptyset$. If $v \in \mathcal{V}^\text{terminal}$ then for any $e \in v^-$ with $\tau^e\left(t-\frac{l_e}{w^e}- \pi^e(t, b^e(t),a^e(t))\right)>0$ the function $\delta_t^v$ may also map to $\emptyset$ (terminating tram; particularly $v^+$ may be empty). 
%			\item[c)] as before
\end{itemize}
	We assume that the delay function for the passenger exchange is the same for all stations, is not dependent on the time explicitly and for any $e \in \mathcal{E}$ has the form
	\begin{align*}
		\pi^e (t,b^e(t), a^e(t)) = \begin{cases}
			0, & b^e(t)+a^e(t)\leq 50,\\
			\frac{1}{50}(b^e(t)-a^e(t)-50), & b^e(t)+a^e(t)> 50.
		\end{cases}
	\end{align*}
	In general, the patterns of waiting and standing times presented in the previous section remain similar when accounting for additional delays due to passenger exchange, although the times increase slightly. Therefore, we focus on the direct impact of these delays.
	
	Figure \ref{fig:linie1_delay_space} illustrates the evolution of average delays along line 1 during the course of a journey. At high service frequencies (5- and 10-minute intervals), the model predicts virtually no delays caused by passenger exchange, as trams are not crowded due to their high frequency. At lower frequencies, however, average delays increase. Most of the delay accumulates in the city center (between Alte Feuerwache and Tattersall), indicating that the total passenger exchange is highest in this section. Unsurprisingly, the largest increase in delays occurs at Mannheim Hauptbahnhof (main station). These irregularities in delay propagation also contribute to increased overall waiting times for passengers at the tram stops.
	\begin{figure}[ht!]
		\centering
		\begin{minipage}{0.54\linewidth}
		\centering
		\includegraphics[width=1\linewidth]{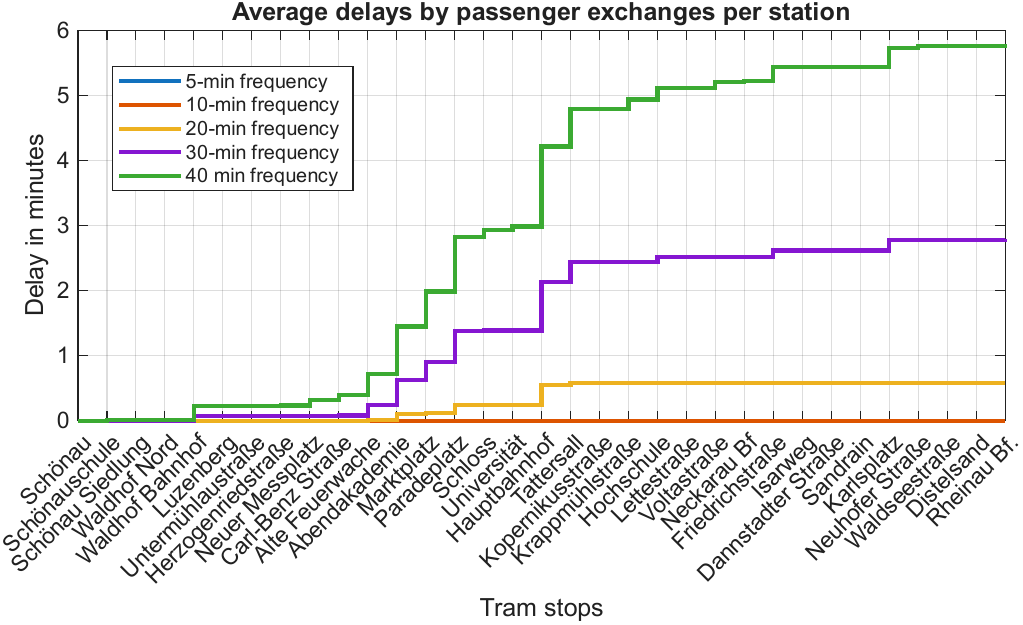}
		\captionof{figure}{The average delays in space caused by the longer passenger exchange times for the different tram frequencies.}
		\label{fig:linie1_delay_space}
	\end{minipage} ~\begin{minipage}{0.44\linewidth}
	\centering
	\includegraphics[width=1\linewidth]{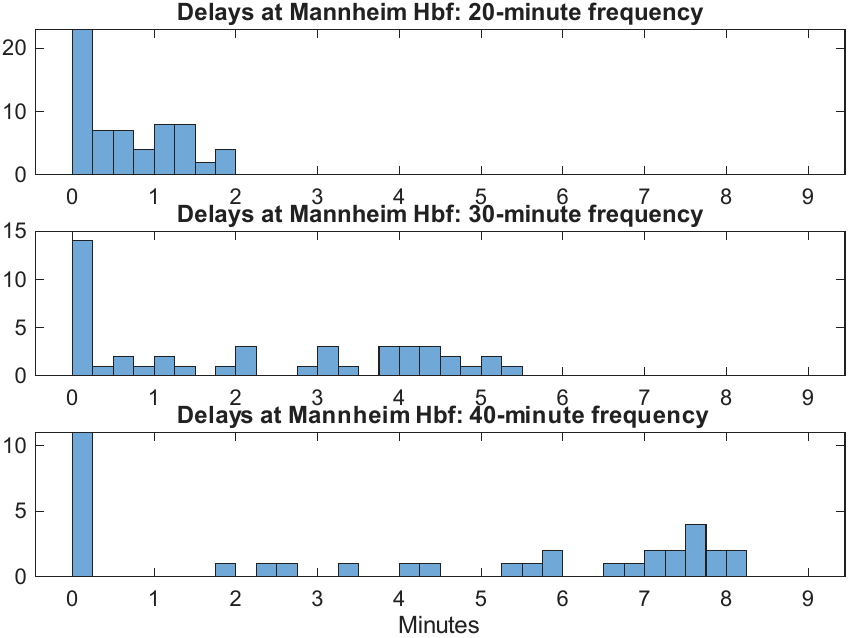}
	\captionof{figure}{Histograms of the tram delays at Mannheim Hauptbahnhof for different tram frequencies.}
	\label{fig:linie1_delay_totalMaHbf}
	\end{minipage}
	\end{figure}
%	\begin{figure}[ht!]
%		\centering
%		\includegraphics[width=0.75\linewidth]{Figures_Strassenbahn/linie1_delay_space.eps}
%		\caption{The average delays in space caused by the longer passenger exchange times for the different tram frequencies.}
%		\label{fig:linie1_delay_space}
%	\end{figure}

	In Figure \ref{fig:linie1_delay_totalMaHbf}, we examine the distribution of delays at the Mannheim Hauptbahnhof tram stop in more detail and present a histogram of the cumulative delays observed upon arrival at Hauptbahnhof. Only the 20-, 30-, and 40-minute service frequencies are shown, as only negligible delays occur at the 5- and 10-minute frequencies in our setting. For the 20-minute frequency, apart from the services that run on time, delays are approximately uniformly distributed up to 2 minutes. These delays likely correspond to services during the morning and afternoon rush hours, when passenger loads are highest. At the 30-minute frequency, the distribution is broader and includes delays exceeding 5 minutes. There appears to be a clustering of delays around 4–5 minutes, with fewer services experiencing delays of 1–3 minutes. At the 40-minute frequency, this clustering shifts further, with most delays occurring around 7–8 minutes. Only a few services remain on schedule, primarily those operating in the early morning.
	%\textcolor{red}{Die Figure mit den am Hbf entstandenden Delays habe ich rausgenommen}

Last, we present a trajectory plot for the scenario with delays, additionally incorporating random tram failures. We assume that 0.5\% of stops experience a breakdown of 8~minutes, and a further 1\% of trams experience a 4~minute breakdown. These additional delays are incorporated in the same manner as passenger-exchange delays.

Figure~\ref{fig:linie1_delay_trajectories10_20} shows trajectory plots of line~1 during the morning rush hours, between 5 and 9~a.m., with a 20-minute (left) and 10-minute interval (right). In particular, in the area before the city center around Alte~Feuerwache, and at approximately 7--8~a.m., we observe highly crowded trams for the lower frequency, indicated by orange or red colors in the trajectories corresponding to highly crowded trams, whose seating and standing capacity is almost reached.

Steeper slopes in the trajectories correspond to delays, either due to passenger exchange or tram failures. Focusing on the tram departing from Sch\"onau at 8:07, a first minor delay occurs at Waldhof Nord, followed by a larger delay at Abendakademie, caused by a tram failure and high passenger exchange. As a result of these delays, the tram carries a large number of passengers. Furthermore, the interval to the preceding tram is considerably longer than 20 minutes, whereas the interval to the following tram is significantly shorter. Consequently, this tram collects passengers who would otherwise have traveled on the next tram, making the subsequent tram less crowded.

For a 10-minute service frequency, in general, tram occupancy is lower, while spatial and temporal patterns of higher passenger loads remain comparable to those observed in the 20-minute frequency scenario. Although service interruptions occur, their effects on the overall network remain more limited.
Still, yellow color levels indicate trams in which seating capacity is fully exhausted. From the passengers point of view, a higher utilization level is undesirable and therefore also in the 10-minute frequency scenario some at full seating capacity. 
\begin{figure}[ht!]	
	\centering
	\begin{minipage}{0.49\linewidth}
	\centering
	\includegraphics[width=1\linewidth]{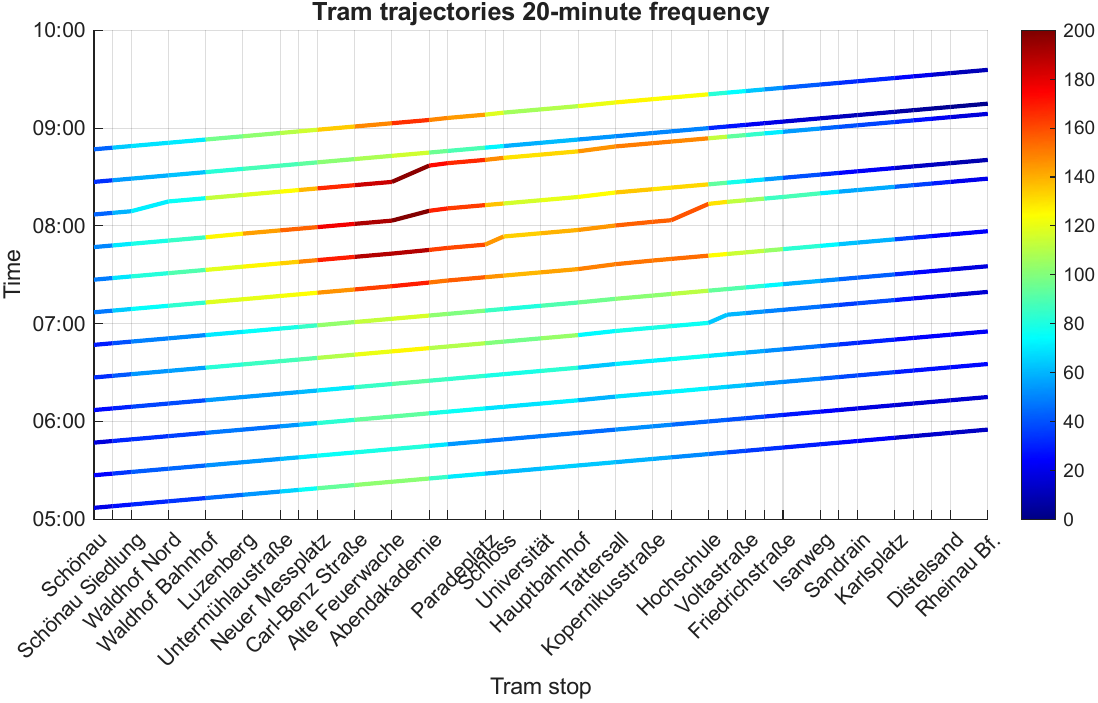}
	\end{minipage} ~\begin{minipage}{0.49\linewidth}
			\centering
		\includegraphics[width=1\linewidth]{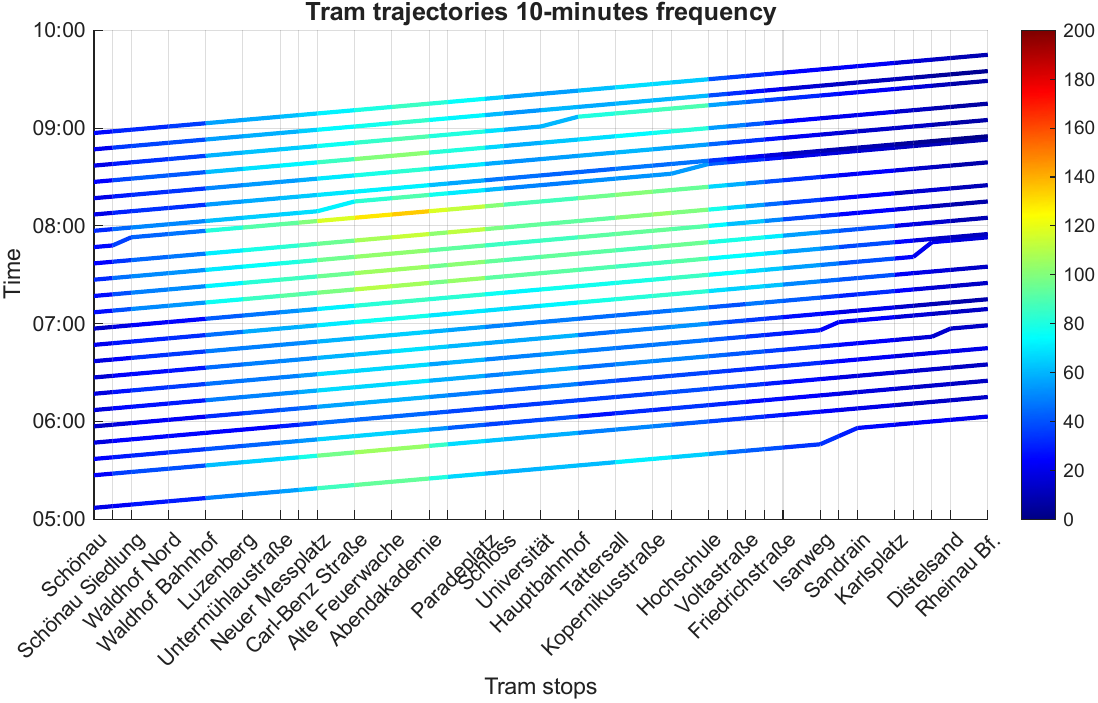}
	\end{minipage}
	\caption{Trajectories of the trams in the morning with a 20-minute frequency (left) and 10-minute frequency (right) of tram service. The coloring shows the number of passengers in the tram.}
	\label{fig:linie1_delay_trajectories10_20}
\end{figure}

%	\textcolor{red}{Generell alles mit Monte Carlo!! an den Anfang um dopplungen zu vermeiden.}
	
	\subsection{Cancellation of tram services}
	\label{sec:numerik_Canellations}
	An even more severe disruption in the tram network is the cancellation of tram services, whose impact on waiting and standing times is analyzed in this subsection.
	Tram service cancellations may occur due to spontaneous technical failures, track obstructions, or staff shortages. In some cases, entire vehicle circulations are canceled for a full day, whereas in other cases only individual trips are affected and can be reinstated quickly. Both types of disruptions are represented in our model by distributing half of all canceled trips evenly over the course of the day, while the remaining half is drawn uniformly from all scheduled trips.
%	To obtain a comprehensive picture of different disruption scenarios, we conduct a Monte Carlo simulation in which a predefined percentage of trips is cancelled in each run.
	The cancellation of individual trips can have markedly different effects on the performance measures for waiting and standing times. For instance, the cancellation of the last trip of the day results, in our model, in passengers having to wait until the first tram service on the following morning.%, therefore the results are based on a Monte Carlo simulation averaging over 1{,}000 runs.
	
	Figure~\ref{fig:cancelWaiting} (left) illustrates how increasing cancellation rates affect the average waiting time in the system.
	For each tram frequency, the shaded areas indicate the 20th and 80th percentiles around the mean.
	For all frequencies, we observe an increase in waiting times as the cancellation rate rises.
	Moreover, the spread between the 20th and 80th percentiles widens with increasing cancellation rates.
	While the increase is almost linear for higher service frequencies, the 30-minute frequency exhibits a pronounced deterioration for cancellation rates exceeding 20\%.
	In this regime, the system begins to collapse, resulting in a steep increase in waiting times. For these scenarios, there are periods during the day in which tram capacity is no longer sufficient, preventing a substantial number of passengers from boarding the vehicles.
	
	Interestingly, a cancellation rate of 30\% under a 10-minute frequency leads to average waiting times comparable to those observed for a 20-minute frequency without any cancellations, despite the higher number of operating trams in the former case.
	This effect arises because canceled services are unevenly distributed over time, increasing the likelihood of extended gaps between consecutive trams and thereby significantly increasing total waiting times.

	\begin{figure}[ht!]
		\centering
		\begin{minipage}{0.49\linewidth}
			\centering
			\includegraphics[width=1\linewidth]{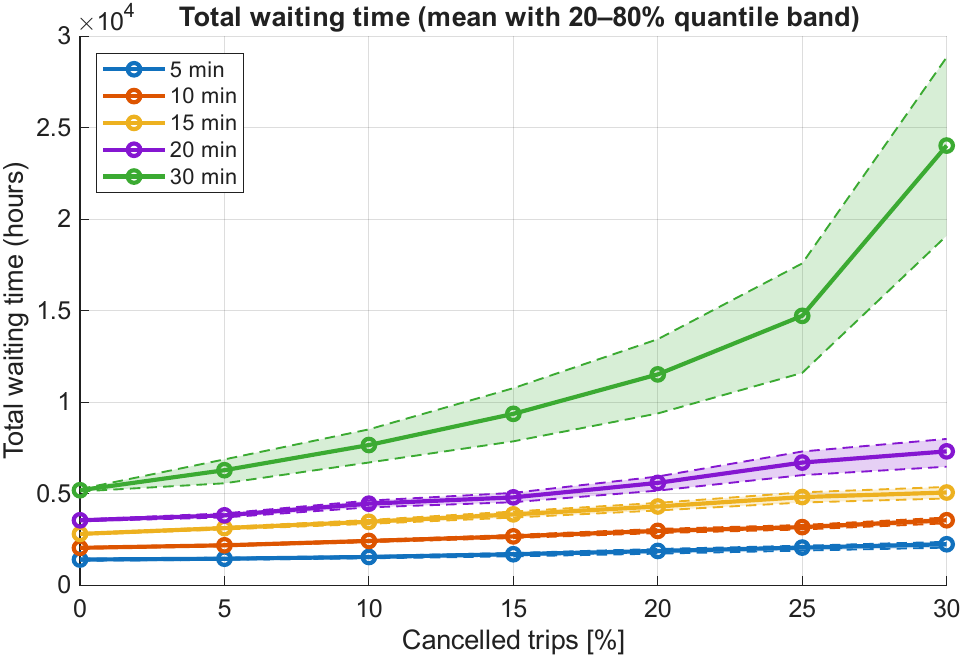}
			
		\end{minipage} ~\begin{minipage}{0.49\linewidth}
			\centering
				\includegraphics[width=1\linewidth]{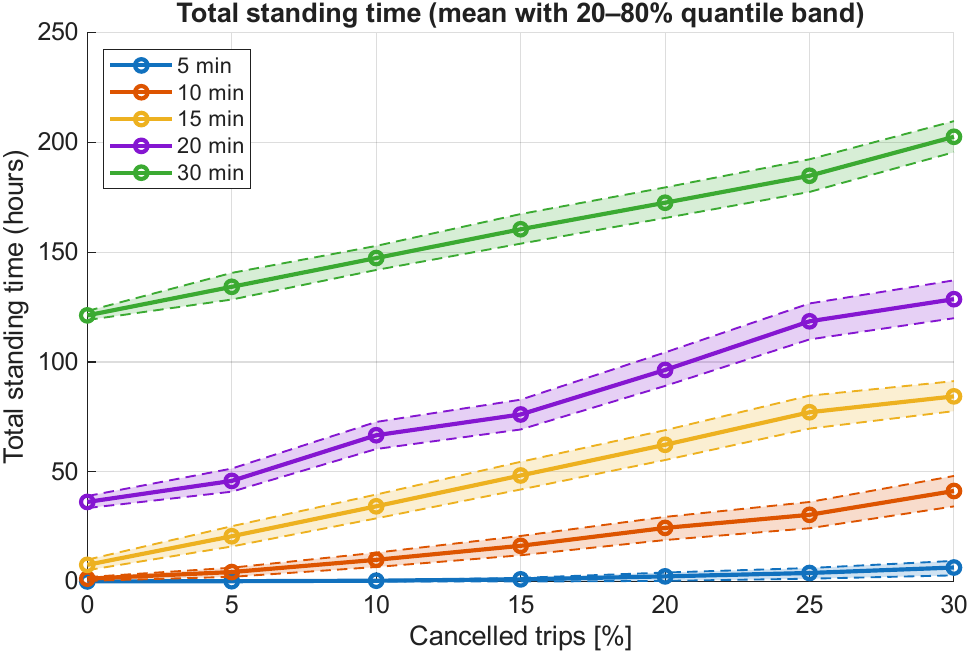}
		\end{minipage}
		\caption{The average total passenger waiting (left) and standing (right) time depending on the cancellation probability for different frequencies.}
		\label{fig:cancelWaiting}
	\end{figure}
	A similar analysis is done in the right part of Figure \ref{fig:cancelWaiting}, where we show the evolution of the standing time for different frequencies and cancellations rates. For all presented frequency scenarios we observe linear increases in the standing time without significant outliers for the 30-min frequency. This can be explained by the fact, that trams are already full at some point in time and any additional passenger must wait at the tram stop and do not influence the number of standing passengers.
	
\subsection{Simulation of a tram network}
\label{sec:numerik_Network}
As a final numerical investigation, we extend the analysis from a single tram line to an entire network containing general $m$--$n$ junctions.
To this end, we consider a section of the northern Mannheim tram network centered around the tram stop Alte Feuerwache.
Five different tram lines reach Alte Feuerwache from different directions.
A schematic overview of the network is shown in Figure~\ref{fig:TramNetwork}.

We adopt a directed graph representation and focus on tram services operating in the direction of Paradeplatz.
The nodes shown represent all tram stops that either constitute starting points of a line or correspond to junctions other than simple 1--1 connections, namely 2--1 junctions (Luzenberg and Bonifatiuskirche) and a 3--2 junction (Alte Feuerwache).
Between the depicted nodes, additional sequences of 1--1 junctions exist, which are not visualized in the figure.
The analysis is carried out up to the stop Paradeplatz, which represents the most central tram stop in Mannheim’s inner city.
 %The lines 1,2,3 and 4 run on a regular 10-minute frequency from 6:00am to 9:00~p.m., and a 20 minutes frequency before and after. Alle tram lines except for line 1 stop their service around midnight, while line 1 continus on a hourly basis. Line 15 only runs in the rush hours from 7:00-9:00am and 2:00~p.m.-7:00~p.m. on a 20-minute frequency.

\tikzset{
	stop/.style={
		ellipse,
		draw,
		minimum width=20mm,
		minimum height=8mm,
		inner sep=0pt,
		align=center
	}
}

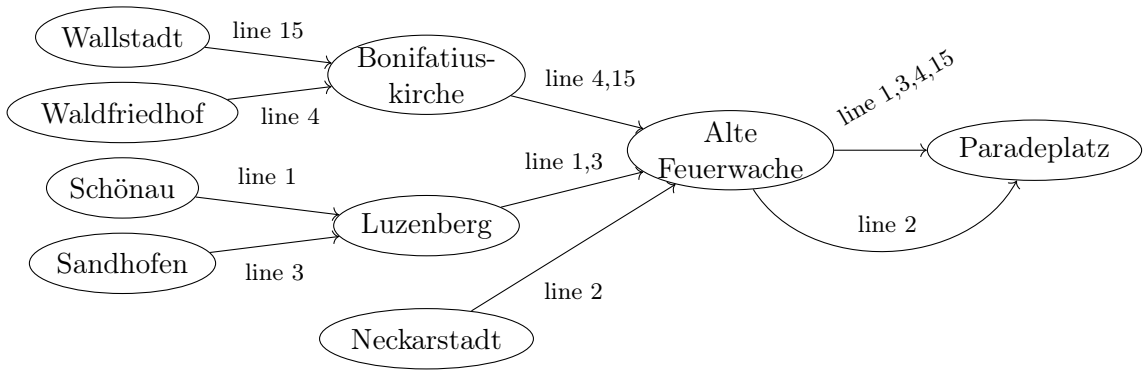
\begin{figure}[htbp]
	\centering
	\begin{tikzpicture}[
		node distance=1.5cm and 2cm,
		every node/.style={circle, draw, minimum size=6mm},
		line/.style={thick}
		]
		
		% Startknoten (links)
		\node[stop] (A1) at (0,2) {Wallstadt};
		\node[stop] (A2) at (0,1) {Waldfriedhof};
		\node[stop] (B1) at (0,0) {Schönau};
		\node[stop] (B2) at (0,-1) {Sandhofen};
		\node[stop] (B3) at (4,-2) {Neckarstadt};
		
		% Vereinigungsknoten
		\node[stop] (C1) at (4,1.5) {Bonifatius-\\kirche};
		\node[stop] (C2) at (4,-0.5) {Luzenberg};
		
		% Endknoten
		\node[stop] (D) at (8,0.5) {Alte\\Feuerwache};
		\node[stop] (E) at (12,0.5) {Paradeplatz};
		
		% Verbindungen oben
		\draw[->] (A1) -- node[midway, above, draw=none, fill=none, inner sep=-4pt] {\footnotesize line 15} (C1);
		\draw[->] (A2) -- node[midway, below, draw=none, fill=none, inner sep=-9pt, xshift=4pt] {\footnotesize line 4} (C1);
		
		% Verbindungen unten
		\draw[->] (B1) -- node[midway, above, draw=none, fill=none, yshift=-5pt] {\footnotesize line 1} (C2);
		\draw[->] (B2) -- node[midway, below, draw=none, fill=none, yshift=6pt] {\footnotesize line 3} (C2);
		
		% Zusammenführung
		\draw[->] (C1) -- node[midway, above, draw=none, fill=none, yshift=-10pt, xshift=5pt] {\footnotesize line 4,15} (D);
		\draw[->] (C2) -- node[midway, above, draw=none, fill=none, yshift=-10pt, xshift=-3pt] {\footnotesize line 1,3} (D);
		\draw[->] (B3) -- node[midway, below, draw=none, fill=none] {\footnotesize line 2} (D);
		
		\draw[->] (D) -- node[midway, above, draw=none, fill=none, rotate=30, xshift=17pt, yshift=-12pt]
		{\footnotesize line 1,3,4,15} (E);
		\draw[->] (D) to[bend right=60] node[midway, above, draw=none, fill=none, yshift=-6pt]
		{\footnotesize line 2} (E);
		
	\end{tikzpicture}
	
	\caption{Schematic overview of the considered tram network with its center Alte Feuerwache.}
	\label{fig:TramNetwork}
\end{figure} 
In Table~\ref{tab:tramservices}, we provide the most important schedule information for the tram stop Alte Feuerwache. Lines 1--4 operate throughout the entire day, while line~15 serves as an additional line during the main rush hours. The last column of Table~\ref{tab:tramservices} lists the planned departure times at Alte Feuerwache in \texttt{hh:mm} format, with particular emphasis on the last digit of the minute, which plays a crucial role under a 10-minute service frequency. It can be observed that lines~3 and~15 share the same scheduled departure time. In our model, this violates the definition of an admissible tram schedule (see Definition~\ref{def: admissibleSchedule}). Consequently, the departures of line~15 between Alte~Feuerwache and Paradeplatz have been shifted by one discretization time step~$\Delta t$.

\begin{table}[ht!]
	\centering
	\begin{tabular}{r||c|c||c|c||c}
		\hline
		line & \makecell{peak\\ frequency} & \makecell{peak\\ service hours} & \makecell{off peak\\ frequency} & \makecell{off peak\\ service hours}& \makecell{departure\\ minute} \\
		\hline
		\hline			
		1 & 10-min & 5~a.m. --  9~p.m. & \makecell{20-min\\60-min} & \makecell{9~a.m.--midnight \\ midnight--5~a.m.} & xx:x3 \\
		\hline			
		2 & 10-min & 6~a.m. -- 9~p.m.& 20-min & 9~p.m. -- midnight & xx:x8 \\
		\hline			
		3 & 10-min & 6~a.m. -- 9~p.m.& 20-min & 9~p.m. -- 1~a.m. & xx:x9\\
		\hline			
		4 & 10-min & 7~a.m. -- 9~p.m.& 20-min & 4--7~a.m., 9~p.m.-1~a.m. & xx:x0\\
		\hline			
		15 & 20-min  & 7--9~a.m., 2--7~p.m. &-&-& xx:x9
	\end{tabular}
	\caption{Tram line information at the stop Alte Feuerwache according to the schedule.}
	\label{tab:tramservices}
\end{table}
We note that on certain sections of the network additional tram lines are in operation (e.g.\ in the vicinity of Bonifatiuskirche). However, in the following analysis we restrict our attention to tram lines that stop at Alte~Feuerwache and consider only passengers traveling on these lines.

To illustrate the basic structural properties of the network, we assume in this subsection that there are no delays or tram cancellations, although these effects could be incorporated without difficulty. %In order to account for uncertainties in passenger arrivals, we employ a Monte Carlo simulation with 1{,}000 runs.

%\paragraph{Capacity utilization}
The performance measure considered in this subsection is the capacity utilization $cu$ of the trams, defined as the ratio of the number of passengers to the number of available seats,
\begin{align*}
	cu = \frac{\text{number of passengers}}{\text{number of available seats}}.
\end{align*}
A utilization value exceeding~1 indicates that passengers are required to stand. Since all tram lines under consideration operate towards the stop Alte~Feuerwache, capacity utilization is measured at this location.

Figure~\ref{fig:CapacityUtilization} displays the capacity utilization of lines~1,~2,~3,~4, and~15 over the course of the day. For clarity of visualization, tram services after midnight are omitted. The results exhibit the characteristic bimodal demand pattern commonly observed in public transportation systems: a morning peak around 8~a.m., corresponding to commuting to work or school, and a second peak in the afternoon around 4~p.m., when passengers return.
On the left of Figure~\ref{fig:CapacityUtilization} we show the capacity utilization according to the true schedule.
Among the considered lines, line~3 shows the highest capacity utilization. This can be attributed to two factors. First, the section between Luzenberg and Paradeplatz exhibits particularly high passenger demand. Second, in the time window between \texttt{xx:x3} and \texttt{xx:x9} (a six-minute interval), no other tram line operates directly towards Paradeplatz. Consequently, all waiting passengers during this interval board line~3. Shortly thereafter, line~15 (\texttt{xx:x9}~+~$\Delta t$), line~4 (\texttt{xx:x0}), and line~1 (\texttt{xx:x3}) follow at shorter headways. Line~2 takes a detour and is therefore not selected by the majority of passengers.

For all lines except line~3, the capacity utilization remains below~1, indicating that under undisturbed operating conditions the available seating capacity is sufficient. In contrast, line~3 exhibits standing passengers during the intervals from 7--9~a.m. and 2--5~p.m. Note that a utilization value below~1 does not necessarily imply that no passengers are standing in practice, as seats may be occupied by luggage or passengers may choose to stand despite available seating.
Since the analysis focuses on passenger flows towards the city center, the highest capacity utilization occurs in the morning, reflecting the predominant commuting direction from suburban areas to the city center.
\begin{figure}[ht!]
	\centering
	\begin{minipage}{0.49\linewidth}
		\centering
		\includegraphics[width=1\linewidth]{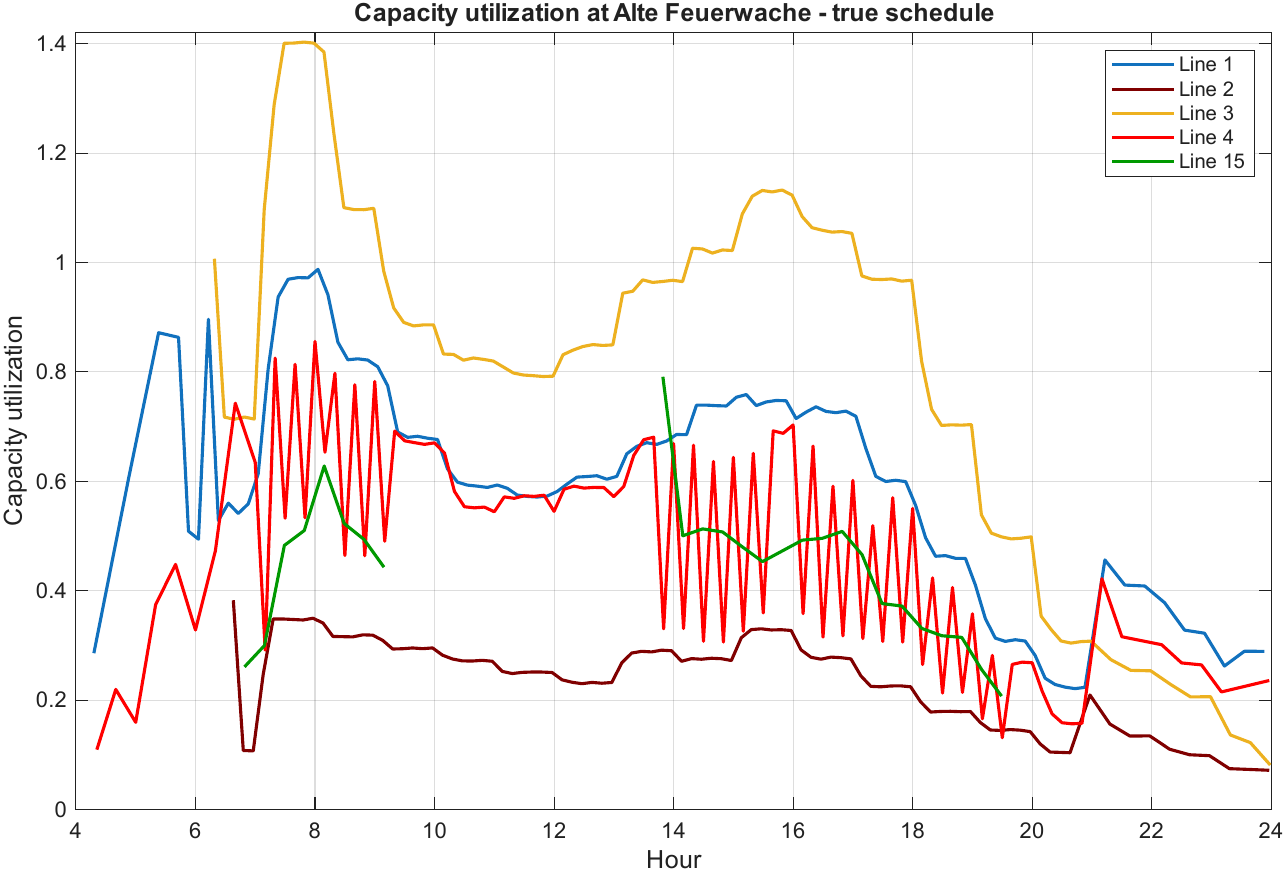}
		
	\end{minipage} ~\begin{minipage}{0.49\linewidth}
		\centering
		\includegraphics[width=1\linewidth]{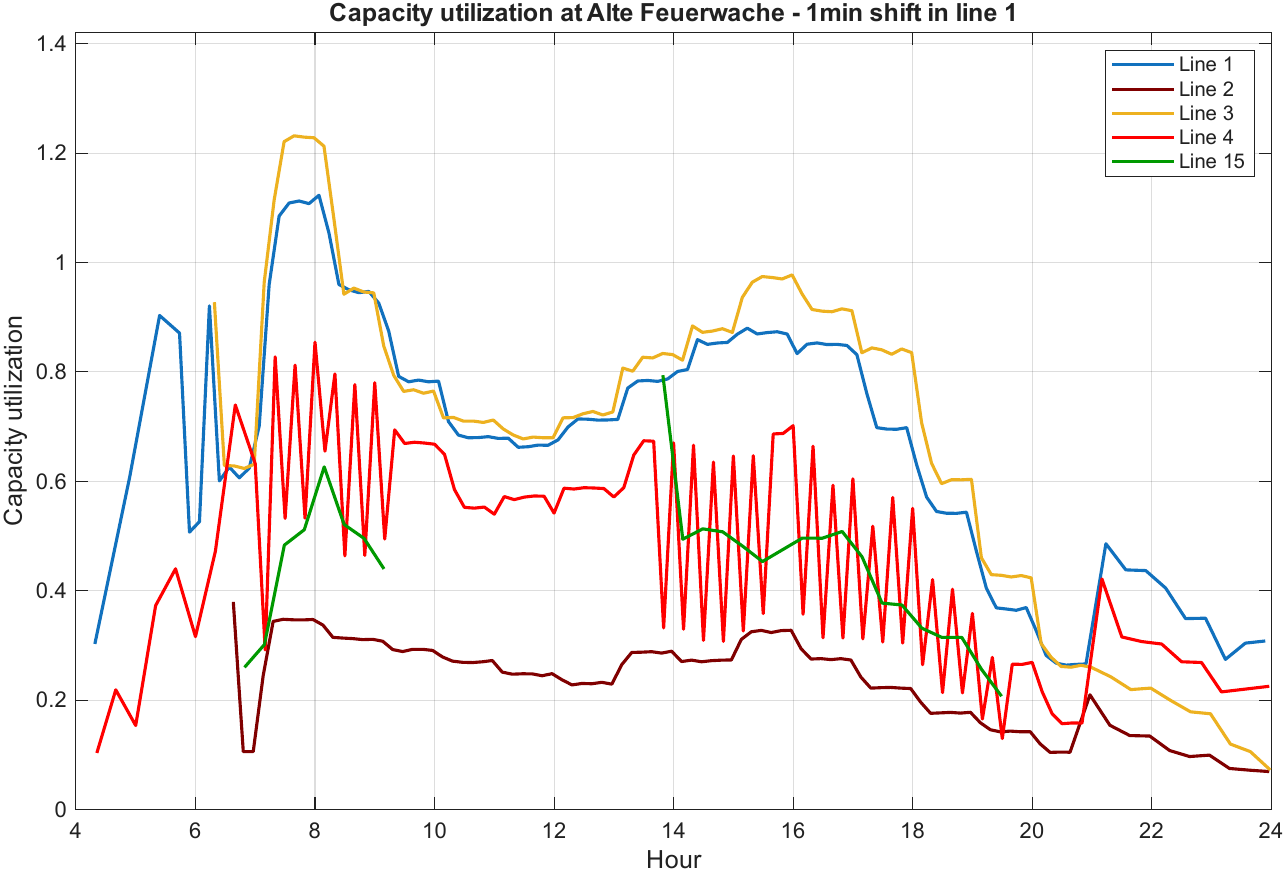}
		
	\end{minipage}
	\caption{The average capacity utilization of the five tram lines at Alte Feuerwache in time with the true schedule (left) and a shift the departure time of linie 1 by one minute.}
	\label{fig:CapacityUtilization}
\end{figure}
A further observable feature is a smaller peak around 9~p.m., coinciding with the transition to off-peak service frequencies. Although passenger demand is lower at this time, the reduced number of tram services leads to an increased capacity utilization.
Finally, an oscillatory pattern in the capacity utilization of line~4 can be observed. These oscillations occur exclusively during periods when line~15 is in operation. On the section between Bonifatiuskirche and Alte~Feuerwache, lines~4 and~15 share the same route. Whenever line~15 operates shortly before line~4, the utilization of line~4 is significantly reduced, as many waiting passengers board line~15 instead.

%
%\begin{figure}[ht!]
%	\centering
%	\includegraphics[width=0.75\linewidth]{Figures_Strassenbahn/network_utilization_normal.eps}
%	\caption{The average capacity utilization of the five tram lines at Alte Feuerwache in time with the true schedule.}
%	\label{fig:CapacityUtilization}
%\end{figure}
In order to better balance passenger allocation, particularly on the section between Luzenberg and Paradeplatz, we propose shifting the departure time of line~1 from \texttt{xx:x3} to \texttt{xx:x4}. This adjustment reduces the number of passengers traveling on line~3 and increases the capacity utilization of line~1, as passengers arriving within the time interval \([\texttt{xx:x3},\,\texttt{xx:x4})\) will now choose line~1 instead of line~3. The resulting capacity utilization is shown in the right panel of Figure~\ref{fig:CapacityUtilization}. In particular, this minor schedule modification prevents capacity utilization values exceeding~1 during the afternoon peak. Moreover, the maximum peak in the morning is significantly reduced. Both effects improve the passenger travel experience and reduce the risk of delays caused by large numbers of boarding and alighting passengers, as discussed in Section~\ref{sec:numerik_Delays}. 

Table \ref{tab:waitingTimesNetwork} shows the effects on the waiting and standing time in the entire network and at the tram stop Alte Feuerwache for different scenarios of shifting line 1 up to three minutes later. It is notable that a shift of one or two minutes later reduces both quantities significantly, while a shift of one minutes seems optimal on the discrete grid for the standing times.

%
%\begin{figure}[ht!]
%	\centering
%	\includegraphics[width=0.75\linewidth]{Figures_Strassenbahn/network_utilization_plus1.eps}
%	\caption{The average capacity utilization of the five tram lines at Alte Feuerwache in time with with a shift of line 1 to a departure time of xx:x4.}
%	\label{fig:CapacityUtilizationPlus1}
%\end{figure}

%Welche Analysen wollen wir fahren:
%\begin{itemize}
%	\item Auslastungszahlen der Linien nach Stunden, ggf. ueberlastung
%	\item Versuch der gleichmäßigeren Verteilung mit Verschiebung der Linie 1
%	\item Effekt auf Linie 4, wenn Linie 15 entlastet. (Bspw. Plot Queue Schafweide)
%\end{itemize}
%
%\textcolor{red}{Eine Tabelle zu den Warte und Stehzeiten bei unseren verschiedenen Verschiebungen}
\begin{table}[ht!]
	\centering
	\begin{tabular}{r||c|c|c|c}
		\hline
		&true schedule &  + 1min & + 2min &  + 3min\\
		\hline\hline
		total waiting time & 3498 & 3440 & 3458 & 3555\\\hline
		waiting time Alte Feuerwache & 248.3 & 234.6 &234.8 & 250.1\\\hline\hline
		total standing time & 11.07 & 4.35 & 4.63 & 11.93\\\hline
		standing time Alte Feuerwache & 2.08 & 1.09 & 1.37 & 2.85\\
	\end{tabular}
	\caption{Average waiting and standing times in hours for scenarios when shifting line 1 zero (true schedule), one, two or three minutes later.}
	\label{tab:waitingTimesNetwork}
\end{table}

	\section{Conclusion}
%	\textcolor{red}{TODO!!!!!}
%	This work introduces a PDE-based model for passenger flow in tram networks. Solutions are formulated in a measure-valued framework and governed by a hyperbolic transport equation. The applicability of the model is demonstrated in the numerical section, where classical traffic phenomena are reproduced in a case study based on the city of Mannheim. Initial effects of disruptions, such as delays and cancellations, are illustrated. A more detailed investigation of these effects, possibly embedded in a multi-objective optimization framework, would enable a systematic analysis of the trade-offs between societal objectives, such as accessibility and service robustness, and the economic and operational goals of the transport operator.
%	
%	The current model does not account for interactions between different lines. In particular, passenger transfers between lines are not explicitly modeled. Furthermore, passenger decision behavior is not incorporated. The model does not distinguish between passengers who board the next arriving tram and those who deliberately wait for a specific service. A rigorous modeling of such behaviors would require detailed trajectory data, which are currently available only to a limited extent and are therefore left for future research.

This work introduces a PDE-based model for passenger flow in tram networks, formulated in a measure-valued framework and governed by a hyperbolic transport equation. Its applicability is demonstrated numerically through a case study based on the city of Mannheim, where classical traffic phenomena as well as initial disruption effects are reproduced. A more detailed analysis of such disruptions within a multi-objective optimization framework could further clarify the trade-offs between societal objectives and the operational goals of the transport operator. The current model does not account for inter-line interactions or passenger decision behavior, such as transfers or waiting strategies for specific services. A rigorous treatment of these aspects would require detailed trajectory data, which are currently scarce and thus left for future research.

The proposed approach provides a promising foundation for the development of multi-modal traffic models. In particular, coupling established vehicular traffic models with the tram model on shared infrastructure segments represents a relevant direction for further research. Extending the framework to incorporate pedestrian and bicycle traffic may ultimately lead to an integrated and comprehensive transportation framework.
	
	\appendix
	\section{Proof of Theorem \ref{thm:UniqueSolutionNetwork}}
		\label{app: appendix}
In this appendix, we provide the proof of the existence theorem for a measure-valued solution on a single edge of the network.
	\begin{proof}[Proof of Theorem \ref{thm:UniqueSolutionNetwork}]
	We plug in the proposed measure-valued solution given by Theorem \ref{thm:UniqueSolutionOneEdge} into the left-hand side of Definition \ref{def: weakMeasureValuedSolutionOneEdge}:
	\begin{align*}
		&-\int_0^\infty \int_0^{l_e}  \varphi^e_t(x,t) + w^e\varphi^e_x(x,t)d\nu^e(x,t)\\
		= & -\int_0^\infty \int_0^{l_e}  
		\Big(\int_0^{\max \lbrace 0, (\theta^e)^{-1}(t)   \rbrace} \varepsilon_{\Phi_t^e(y,0)}(\lbrace x \rbrace) d\nu^e_{t=0}(y) + \int_{\max \lbrace 0, (\sigma^e)^{-1}(t) \rbrace}^t \varepsilon_{\Phi_t^e(0,s)}(\lbrace x \rbrace)d\tilde{\nu}^e_{x=0}(s) \\
		&~~+ \int_{\max \lbrace 0, (\sigma^e)^{-1}(t) \rbrace}^t\varepsilon_{\Phi_t^e(0,s)}(\lbrace x \rbrace)db^e(s) \Big)
		\varphi^e_t(x,t) \\
		&~~+ w^e\Big(\int_0^{\max \lbrace 0, (\theta^e)^{-1}(t)   \rbrace} \varepsilon_{\Phi_t^e(y,0)}(\lbrace x \rbrace) d\nu^e_{t=0}(y) + \int_{\max \lbrace 0, (\sigma^e)^{-1}(t) \rbrace}^t \varepsilon_{\Phi_t^e(0,s)}(\lbrace x \rbrace)d\tilde{\nu}^e_{x=0}(s) \\
		&~~+ \int_{\max \lbrace 0, (\sigma^e)^{-1}(t) \rbrace}^t\varepsilon_{\Phi_t^e(0,s)}(\lbrace x \rbrace)db^e(s)\Big)\varphi^e_x(x,t)dxdt.
	\end{align*}
	We apply Fubini-Tonellis theorem to interchange the integration of the two inner integrals, to evaluate the Dirac measures.
	\begin{align*}
		{=} & - \int_0^\infty \int_0^{\max \lbrace 0, (\theta^e)^{-1}(t)   \rbrace} \int_0^{l_e} \varepsilon_{\Phi_t^e(y,0)}(\lbrace x \rbrace) \varphi^e_t(x,t) +  w^e\varepsilon_{\Phi_t^e(y,0)}(\lbrace x \rbrace) \varphi^e_x(x,t)dx d\nu^e_{t=0}(y)dt\\
		& - \int_0^\infty \int_{\max \lbrace 0, (\sigma^e)^{-1}(t) \rbrace}^t \int_0^{l_e}\varepsilon_{\Phi_t^e(0,s)}(\lbrace x \rbrace) \varphi^e_t(x,t) + w^e\varepsilon_{\Phi_t^e(0,s)} (\lbrace x \rbrace)\varphi^e_x(x,t) dx d\tilde{\nu}^e_{x=0}(s) dt\\
		&- \int_0^\infty \int_{\max \lbrace 0, (\sigma^e)^{-1}(t) \rbrace}^t \int_0^{l_e}\varepsilon_{\Phi_t^e(0,s)}(\lbrace x \rbrace) \varphi^e_t(x,t) + w^e\varepsilon_{\Phi_t^e(0,s)} (\lbrace x \rbrace) \varphi^e_x(x,t) dx db^e(s) dt \phantom{+ \int_0^\infty  \int_0^{l_e}\varepsilon_{l_e}(\lbrace x \rbrace) \varphi^e_t(x,t) a^e(t) dx dt} \\
		%	&~~ - \int_0^\infty \int_0^{\max \lbrace 0, (\theta^e)^{-1}(t)   \rbrace} \int_0^{l_e}  w^e(x)\varepsilon_{\Phi_t^e(y,0)}(\lbrace x \rbrace) \varphi^e_x(x,t) dx d\nu^e_{t=0}(y)dt \\
		%	&~~- \int_0^\infty \int_{\max \lbrace 0, (\sigma^e)^{-1}(t) \rbrace}^{t} \int_0^{l_e}  w^e(x)\varepsilon_{\Phi_t^e(0,s)} (\lbrace x \rbrace)\varphi^e_x(x,t)dx d\tilde{\nu}^e_{x=0}(s)dt\\
		%	&~~- \int_0^\infty \int_{\max \lbrace 0, (\sigma^e)^{-1}(t) \rbrace}^t \int_0^{l_e} w^e(x)\varepsilon_{\Phi_t^e(0,s)} (\lbrace x \rbrace) \varphi^e_x(x,t)dx db^e(s)dt  \phantom{+\int_0^\infty  \int_0^{l_e}w^e(x)\varepsilon_{l_e} ( \lbrace x \rbrace) \varphi^e_x(x,t) a^e(t)dx dt}\\
		= & - \int_0^\infty \int_0^{\max \lbrace 0, (\theta^e)^{-1}(t)   \rbrace}   \varphi^e_t(\Phi_t^e(y,0),t) + w^e\varphi^e_x(\Phi_t^e(y,0),t) d\nu^e_{t=0}(y)dt\\
		& - \int_0^\infty \int_{\max \lbrace 0, (\sigma^e)^{-1}(t) \rbrace}^t  \varphi^e_t(\Phi_t^e(0,s),t) + w^e \varphi^e_x(\Phi_t^e(0,s),t) d\tilde{\nu}^e_{x=0}(s) dt\\
		&- \int_0^\infty \int_{\max \lbrace 0, (\sigma^e)^{-1}(t) \rbrace}^t  \varphi^e_t(\Phi_t^e(0,s),t) + w^e\varphi^e_x(\Phi_t^e(0,s),t) db^e(s) dt.
	\end{align*}
	Making use of the fact that with the definition of the push-forward in Equation \eqref{eq:PushForward} its time derivative is given by $\frac{d}{dt} \Phi^e_t(y,0) = w^e$, we can write the integrands as total time derivatives
	\begin{align*}
		= & - \int_0^\infty \int_0^{\max \lbrace 0, (\theta^e)^{-1}(t)   \rbrace} \frac{d}{dt} \varphi^e(\Phi_t^e(y,0),t)d\nu^e_{t=0}(y) dt\\
		& - \int_0^\infty \int_{\max \lbrace 0, (\sigma^e)^{-1}(t) \rbrace}^t \frac{d}{dt} \varphi^e(\Phi_t^e(0,s),t)d{\tilde{\nu}}^e_{x=0}(s) dt\\
		& - \int_0^\infty \int_{\max \lbrace 0, (\sigma^e)^{-1}(t) \rbrace}^t \frac{d}{dt}\varphi^e(\Phi_t^e(0,s),t)db^e(s)dt \phantom{+ \int_0^\infty  \varphi^e_t(l_e,t) + w^e(l_e) \varphi^e_x(l_e,t) da^e(t)}.
	\end{align*}
	Again applying the Theorem of Fubini-Tonelli and the fundamental theorem of calculus, to solve the integral with the total time derivative, yields:
	\begin{align*}
		= & -  \int_0^{l_e} \int_0^{\theta^e(y)} \frac{d}{dt} \varphi^e(\Phi_t^e(y,0),t)dt d\nu^e_{t=0}(y)  - \int_0^\infty \int_s^{\sigma^e(s)} \frac{d}{dt} \varphi^e(\Phi_t^e(0,s),t)dtd{\tilde{\nu}}^e_{x=0}(s) \\
		& - \int_0^\infty \int_s^{\sigma^e(s)} \frac{d}{dt}\varphi^e(\Phi_t^e(0,s),t)dtdb^e(s) \phantom{+ \int_0^\infty  \varphi^e_t(l_e,t) + w^e(l_e) \varphi^e_x(l_e,t) da^e(t)}\\
		%			=& -\int_0^{l_e}  [\varphi^e(\Phi_t^e(y,0),t)]_{t=0}^{t=\theta^e(y)} d\nu^e_{t=0}(y)  - \int_0^\infty  [\varphi^e(\Phi_t^e(0,s),t)]_{t=s}^{t={\sigma^e(s)}} d{\tilde{\nu}}^e_{x=0}(s) \\
		%			& - \int_0^\infty [\varphi^e(\Phi_t^e(0,s),t)]_{t=s}^{t={\sigma^e(s)}}db^e(s) + \int_0^\infty  \varphi^e_t(l_e,t) \phantom{+w^e(l_e) \varphi^e_x(l_e,t) da^e(t)}\\
		=& - \int_0^{l_e}{\varphi^e(l_e,\theta^e(y))d\nu^e_{t=0}(y)} - \varphi^e(\Phi_0^e(y,0),0)d\nu^e_{t=0}(y)\\
		& -\int_0^\infty {\varphi^e(l_e,\sigma^e(s)) d\tilde{\nu}_{x=0}^e(s)} - \varphi^e(\Phi_s^e(0,s),s)d\tilde{\nu}^e_{x=0}(s) \\
		& -\int_0^\infty {\varphi^e(l_e,\sigma^e(s)) db^e(s)}- \varphi^e(\Phi_s^e(0,s),s)db^e(s)\\
		%	&~~+ hiermussnochangepasst werden {\color{blue} +\int_0^\infty \varphi^e(l_e,\sigma^e(s)) da^e(s)} - \int_0^\infty \varphi^e(l_e,s)da^e(s)\\
		=& \int_0^{l_e}\varphi^e(y,0)d\nu^e_{t=0}(y)+ \int_0^\infty\varphi^e(0,s)d\tilde{\nu}^e_{x=0}(s) + \int_0^\infty \varphi^e(0,s)db^e(s) - \int_0^\infty\varphi^e(l_e,s)da^e(s) \\
		& - {\int_0^\infty \varphi^e(l_e,s)d\tilde{\nu}_{x=l_e}^e(s)},
	\end{align*}
	where we used that
	\begin{align*}
		\int_0^\infty \varphi^e(l_e,\sigma^e(s))d\tilde{\nu}_{x=0}^e(s) = \int_{\sigma^e(0)}^\infty \varphi^e(l_e,s)d\tilde{\nu}_{x=l_e}^e(s)
	\end{align*}
	and a similar equation also for the boarding passengers. Furthermore
	\begin{align*}
		\int_0^{l_e}\varphi^e(l_e,\theta^e(y))d\nu^e_{t=0}(y) = \int_0^{\theta^e(0)}\varphi^e(l_e,s)d\tilde{\nu}^e_{x=l_e}(s)
	\end{align*}
	and using $\theta^e(0)=\sigma^e(0)$ we obtain the integrals for the outflow measure $\tilde{\nu}_{x=l_e}$ by the following contributions
	\begin{align*}
		&\int_0^\infty \varphi^e(l_e,s)d\tilde{\nu}_{x=l_e}^e(s) \\&= \int_0^\infty \varphi^e(l_e,\theta^e(y))d\nu^e_{t=0}(y) +   \varphi^e(l_e,\sigma^e(s)) d\tilde{\nu}_{x=0}^e(s) +   \varphi^e(l_e,\sigma^e(s)) db^e(s).
	\end{align*}
	Furthermore we added a zero term for the alighting passengers which are the fourth part of the outflowing passengers in $\tilde{\nu}^e_{x=l_e}$ and there have to be considered as an additional term. This is exactly the right-hand side of the definition of a measure-valued solution in Definition \ref{def: weakMeasureValuedSolutionOneEdge}.

\end{proof}
	
	\printbibliography

@article{Camili2017,
title = {Transport of measures on networks},
journal = {Networks and Heterogeneous Media},
volume = {12},
number = {2},
pages = {191-215},
year = {2017},
%issn = {1556-1801},
%doi = {10.3934/nhm.2017008},
%url = {https://www.aimsciences.org/article/id/b12d7c9e-0023-47ce-9cfd-3ee2adf3c281},
author = {F. Camilli and R. De Maio and A. Tosin},
keywords = {Network, transport equation, measure-valued solutions, distribution conditions}
}

@article{Lueckerath2012,
author = {Lückerath, D. and Ullrich, O. and Speckenmeyer, E.},
year = {2012},
month = {08},
pages = {61-68},
title = {Modeling Time Table based Tram Traffic},
volume = {22},
journal = {Simulation Notes Europe},
%doi = {10.11128/sne.22.tn.10121}
}

@article{Evers2015,
title = {Mild solutions to a measure-valued mass evolution problem with flux boundary conditions},
journal = {Journal of Differential Equations},
volume = {259},
number = {3},
pages = {1068-1097},
year = {2015},
%issn = {0022-0396},
%doi = {https://doi.org/10.1016/j.jde.2015.02.037},
%url = {https://www.sciencedirect.com/science/article/pii/S0022039615001096},
author = {J.H.M. Evers and S. C. Hille and A. Muntean},
keywords = {Measure-valued equations, Flux boundary condition, Mild solutions, Boundary layer asymptotics, Singular limit, Convergence rate}
}

@article{Evers2018,
author = {Evers, J. H. M. and Hille, S. C. and Muntean, A.},
title = {Measure-Valued Mass Evolution Problems with Flux Boundary Conditions and Solution-Dependent Velocities},
journal = {SIAM Journal on Mathematical Analysis},
volume = {48},
number = {3},
pages = {1929-1953},
year = {2016}
}

@article{Piccoli2014,
  author    = {B. Piccoli and F. Rossi},
  title     = {Generalized Wasserstein Distance and its Application to Transport Equations with Source},
  journal   = {Archive for Rational Mechanics and Analysis},
  year      = {2014},
  volume    = {211},
  number    = {1},
  pages     = {335--358},
%  doi       = {10.1007/s00205-013-0669-x},
%  url       = {https://doi.org/10.1007/s00205-013-0669-x},
%  issn      = {1432-0673}
}

@article{Camilli2018,
title = {Measure-valued solutions to nonlocal transport equations on networks},
journal = {Journal of Differential Equations},
volume = {264},
number = {12},
pages = {7213-7241},
year = {2018},
%issn = {0022-0396},
%doi = {https://doi.org/10.1016/j.jde.2018.02.015},
%url = {https://www.sciencedirect.com/science/article/pii/S0022039618300986},
author = {F. Camilli and R. {De Maio} and A. Tosin},
keywords = {Network, Transport equation, Measure-valued solution, Transmission conditions}
}

@book{Kazuaki2014,
series = {Springer Monographs in Mathematics},
%isbn = {9783662436967},
year = {2014},
title = {Semigroups, Boundary Value Problems and Markov Processes},
edition = {2nd ed. 2014},
%language = {eng},
address = {Berlin, Heidelberg s.l},
author = {Taira, K.},
keywords = {Mathematics; Differential equations Partial; Distribution (Probability theory); Probabilities; Harmonic analysis; Functional analysis; Boundary value problems; Markov processes; Semigroups},
}

@article{Dorini.2011,
 author = {Dorini, F. A. and Cunha, M. C.C.},
 year = {2011},
 title = {On the linear advection equation subject to random velocity fields},
% url = {https://EconPapers.repec.org/RePEc:eee:matcom:v:82:y:2011:i:4:p:679-690},
 keywords = {35R60;60H15;Advection;Gaussian processes;Random;Velocity},
 pages = {679--690},
 volume = {82},
 number = {4},
 journal = {Mathematics and Computers in Simulation}
}

@article{Gugat.2015f,
 author = {Gugat, M. and Keimer, A. and Leugering, G. and Wang, Z.},
 year = {2015},
 title = {Analysis of a system of nonlocal conservation laws for multi-commodity flow on networks},
 %url = {https://www.aimsciences.org/article/id/8d86946c-ce0a-49d5-b915-ce76bf28019d},
% keywords = {Conservation laws;conservation laws on networks;Existence of minimizers;multi-commodity model;nonlocal %conservation laws;optimal control of conservation laws on networks;optimal nodal control;systems of hyperbolic PDEs},
 pages = {749--785},
 volume = {10},
 number = {4},
 journal = {Networks and Heterogeneous Media},
% doi = {10.3934/nhm.2015.10.749}
}

@article{DiPerna1989,
author = {DiPerna, R.J. and Lions, P.L.},
journal = {Inventiones mathematicae},
keywords = {global solutions; transport equation; kinetic Vlasov-type models; fluid mechanics},
number = {3},
pages = {511-548},
title = {Ordinary differential equations, transport theory and Sobolev spaces.},
%url = {http://eudml.org/doc/143741},
volume = {98},
year = {1989}
}

@article{Gottlich.2022b,
 author = {Göttlich, S. and Schillinger, T.},
 year = {2022},
 title = {Control strategies for transport networks under demand uncertainty},
 pages = {74},
 volume = {48},
 number = {6},
 %issn = {1572-9044},
 journal = {Advances in Computational Mathematics}
 %doi = {10.1007/s10444-022-09993-9}
}

@article{Gottlich.2019b,
 author = {G{\"o}ttlich, S. and Korn, R. and Lux, K.},
 year = {2019},
 title = {Optimal control of electricity input given an uncertain demand},
 pages = {1--28},
 volume = {90},
 journal = {Mathematical Methods of Operations Research}
% doi = {10.1007/s00186-019-00678-6}
}

@article{LiRao2004,
  author  = {Li, T. and Rao, B.},
  title   = {Exact Boundary Controllability of Unsteady Flows in a Tree-like Network of Open Canals},
  journal = {Methods and Applications of Analysis},
  volume  = {11},
  number  = {3},
  pages   = {353--366},
  year    = {2004},
  month   = sep
}

@book{LEVEQUE.2002,
 author = {LeVeque, R. J.},
 year = {2002},
 title = {Finite Volume Methods for Hyperbolic Problems},
 publisher = {{Cambridge University Press}},
 series = {Cambridge Texts in Applied Mathematics},
 volume = {31},
 address = {Cambridge},
% doi = {10.1017/CBO9780511791253}
}

@InProceedings{bortoletto2024,
  author =	{Bortoletto, E. and van Lieshout, R. N. and Masing, B. and Lindner, N.},
  title =	{{Periodic Event Scheduling with Flexible Infrastructure Assignment}},
  booktitle =	{24th Symposium on Algorithmic Approaches for Transportation Modelling, Optimization, and Systems (ATMOS 2024)},
  pages =	{4:1--4:18},
%  series =	{Open Access Series in Informatics (OASIcs)},
%  ISBN =	{978-3-95977-350-8},
%  ISSN =	{2190-6807},
  year =	{2024},
  volume =	{123},
  editor =	{Bouman, Paul C. and Kontogiannis, Spyros C.},
  publisher =	{Schloss Dagstuhl -- Leibniz-Zentrum f{\"u}r Informatik},
%  address =	{Dagstuhl, Germany}
%  URL =		{https://drops.dagstuhl.de/entities/document/10.4230/OASIcs.ATMOS.2024.4},
%  URN =		{urn:nbn:de:0030-drops-211929},
%  doi =		{10.4230/OASIcs.ATMOS.2024.4},
%  annote =	{Keywords: Periodic Event Scheduling, Periodic Timetabling, Railway Timetabling, Matchings, Infrastructure %Assignments, Platform Assignments, Station Capacities}
}

@article{Serafini1989,
  title={A Mathematical Model for Periodic Scheduling Problems},
  author={P. Serafini and W. Ukovich},
  journal={SIAM J. Discret. Math.},
  year={1989},
  volume={2},
  pages={550-581}
%  url={https://api.semanticscholar.org/CorpusID:19409360}
}

@article{Masing2023,
title = {Periodic timetabling with integrated track choice for railway construction sites},
journal = {Journal of Rail Transport Planning \& Management},
volume = {28},
pages = {100416},
year = {2023},
%issn = {2210-9706},
%doi = {https://doi.org/10.1016/j.jrtpm.2023.100416},
%url = {https://www.sciencedirect.com/science/article/pii/S2210970623000483},
author = {B. Masing and N. Lindner and C. Liebchen},
keywords = {Railway timetabling, Periodic timetabling, Periodic event scheduling, Train routing, Turnarounds}
}

@book{Iwnicki.2019,
  editor    = {Iwnicki, S. and Spiryagin, M. and Cole, C. and McSweeney, T.},
  title     = {Handbook of Railway Vehicle Dynamics},
  edition   = {2},
  publisher = {CRC Press},
  year      = {2019},
 % doi       = {10.1201/9780429469398}
}

@article{Lighthill.1955,
 author = {Lighthill, M. J. and Whitham, G. B.},
 year = {1955},
 title = {On Kinematic Waves. II. A Theory of Traffic Flow on Long Crowded Roads},
 pages = {317--345},
 volume = {229},
 number = {1178},
 journal = {Proceedings of the Royal Society of London. Series A, Mathematical and Physical Sciences}
}

@article{RICHARDS.1956,
 author = {Richards, P. I.},
 year = {1956},
 title = {Shock Waves on the Highway},
 pages = {42--51},
 volume = {4},
 number = {1},
 journal = {Operations Research}
}

@book{GARAVELLO.2006,
 author = {Garavello, M. and Piccoli, B.},
 year = {2006},
 title = {Traffic flow on networks},
 address = {Springfield, Mo},
 volume = {1},
 publisher = {{AIMS Series on  Applied Mathematics, American Institute of Mathematical Sciences (AIMS)}},
 series = {AIMS series on applied mathematics}
}

@book{GARAVELLO.2016,
 author    = {Garavello, M. and Han, K. and Piccoli, B.},
 year      = {2016},
 title     = {Models for Vehicular Traffic on Networks},
 address   = {Springfield, Mo},
 volume    = {9},
 publisher = {{AIMS Series on Applied Mathematics, American Institute of Mathematical Sciences (AIMS)}},
 series    = {AIMS series on applied mathematics}
}

@article{BERTSCH.2024,
title = {Measure-valued solutions of scalar hyperbolic conservation laws, Part 1: Existence and time evolution of singular parts},
journal = {Nonlinear Analysis},
volume = {245},
%pages = {113571},
year = {2024},
%issn = {0362-546X},
%doi = {https://doi.org/10.1016/j.na.2024.113571},
%url = {https://www.sciencedirect.com/science/article/pii/S0362546X24000907},
author = {M. Bertsch and F. Smarrazzo and A. Terracina and A. Tesei},
keywords = {First order hyperbolic conservation laws, Radon measure-valued entropy solutions, Continuity properties, Compatibility conditions}
}

@article{Colombo.2012,
author = {Colombo, R. M. and Garavello, M. and L\'{e}cureux-Mercier, M.},
title = {A class of nonlocal models for pedestrian traffic},
journal = {Mathematical Models and Methods in Applied Sciences},
volume = {22},
number = {04},
%pages = {115003},
year = {2012}
%doi = {10.1142/S0218202511500230},
%URL = {
%        https://doi.org/10.1142/S0218202511500230
%},
%eprint = {
%        https://doi.org/10.1142/S0218202511500230 
%   }
}

@article{Joumaa.2026,
title = {A macroscopic framework for modeling heterogeneous traffic flows on urban networks},
journal = {Applied Mathematical Modelling},
volume = {156},
pages = {116789},
year = {2026},
%issn = {0307-904X},
%doi = {https://doi.org/10.1016/j.apm.2026.116789},
%url = {https://www.sciencedirect.com/science/article/pii/S0307904X26000508},
author = {A. Joumaa and P. Goatin and G. {De Nunzio}},
keywords = {Multi-class macroscopic traffic flow models on networks, Hyperbolic systems of conservation laws, Finite volume schemes, Traffic management}
}

@article{Gwiazda2012,
author = {Gwiazda, P. and Jamr\'{o}z, G. and Marciniak-Czochra, A.},
title = {Models of Discrete and Continuous Cell Differentiation in the Framework of Transport Equation},
journal = {SIAM Journal on Mathematical Analysis},
volume = {44},
number = {2},
pages = {1103-1133},
year = {2012}
}

@inproceedings{Roelofsen2018,
  author    = {Roelofsen, D. and Cats, O. and van Oort, N. and Hoogendoorn, S.},
  title     = {Assessing disruption management strategies in rail-bound urban public transport systems from a passenger perspective},
  booktitle = {Proceedings of the Conference on Advanced Systems in Public Transport (CASPT) 2018},
  year      = {2018},
  address   = {Brisbane, Australia},
  month     = jul,
  pages     = {Article 21}
}

@article{NARAYAN2017,
title = {Performance assessment of fixed and flexible public transport in a multi agent simulation framework},
journal = {Transportation Research Procedia},
volume = {27},
pages = {109-116},
year = {2017},
note = {20th EURO Working Group on Transportation Meeting, EWGT 2017, 4-6 September 2017, Budapest, Hungary},
author = {J. Narayan and O. Cats and N. {van Oort} and S. Hoogendoorn}
}

@article{Courant.1928,
  title={{\"U}ber die partiellen Differenzengleichungen der mathematischen Physik},
  author={R. Courant and K. Friedrichs and H. Lewy},
  journal={Mathematische Annalen},
  year={1928},
  volume={100},
  pages={32-74}
}

@article{Ogata.1981,
 author = {Ogata, Y.},
 year = {1981},
 title = {On Lewis' simulation method for point processes},
 pages = {23--31},
 volume = {27},
 number = {1},
 journal = {IEEE Transactions on Information Theory}
}

@article{Herty2003,
author = {Herty, M. and Klar, A.},
title = {Modeling, Simulation, and Optimization of Traffic Flow Networks},
journal = {SIAM Journal on Scientific Computing},
volume = {25},
number = {3},
pages = {1066-1087},
year = {2003}
}
	
\end{document}